\title{Exotic Monoidal Structures and Abstractly Automorphic Representations for $\mathrm{GL}(2)$}
\author{Gal Dor\\
Tel-Aviv University}
\date{September 2022}
\begin{document}

\maketitle

\begin{abstract}
    We use the theta correspondence to study the equivalence between Godement--Jacquet and Jacquet--Langlands $L$-functions for $\GL(2)$. We show that the resulting comparison is in fact an expression of an exotic symmetric monoidal structure on the category of $\GL(2)$-modules.
    
    Moreover, this enables us to construct an abelian category of \emph{abstractly automorphic} representations, whose irreducible objects are the usual automorphic representations. We speculate that this category is a natural setting for the study of automorphic phenomena for $\GL(2)$, and demonstrate its basic properties.
    
    This paper is a part of the author's thesis \cite{alg_struct_L_funcs}. 
\end{abstract}

\tableofcontents

\section{Introduction} \label{sect:intro_to_triality}

In order to minimize technical difficulties, in this paper we will only discuss automorphic forms over function fields $F$ of characteristic $\neq 2$. Denote the adeles over $F$ by $\AA=\AA_F$.

This paper has two parts. The first part focuses on the construction of a symmetric monoidal structure on the category of smooth representations of $\GL(2)$ over a local field. The second part uses this construction in the global setting, to construct an abelian category of ``abstractly automorphic'' representations.

While the first part takes up the majority of this paper, it is the author's expectation that the second part (which uses the first in an essential way) would be the most interesting.

Therefore, before delving into the details of the first part of the paper, let us briefly discuss the second.

There is a famously unsatisfactory aspect of the global theory of automorphic representations. There is an informal analogy between the global cuspidal representations and local supercuspidal representations. Both are in some sense compactly supported: the local representations in terms of matrix coefficients, and the global representations in terms of their corresponding automorphic forms. Moreover, there is some similarity between the local theory of Jacquet modules and the global theory of constant terms. Additionally, there is similarity between the notion of parabolic induction and the notion of Eisenstein series.

This analogy between the local and global theory is fairly imprecise. Supercuspidal representations, Jacquet modules and parabolic induction all enjoy the status of functors, with clear formal properties and relationships between each other. On the other hand, cusp forms, constant terms and Eisenstein series are all considered at a lower level of categorification, as maps between specific spaces of automorphic functions.

We want to strengthen this analogy by placing these constructions on an equal footing.

That is, we want to construct a category of ``abstractly automorphic'' representations. This category should decompose as a category into a cuspidal and an Eisenstein part. Constant terms and Eisenstein series should translate into a pair of adjoint functors, and cuspidal abstractly automorphic representations should all be killed by taking their automorphic parabolic restriction. This should allow us to discuss the cuspidal automorphic spectrum in more or less the same terms as the supercuspidal local spectrum.

This paper will culminate in the construction (and proving the basic properties) of a category of abstractly automorphic representations for $\GL(2)$ over $F$. A detailed proof of the above properties appears in a separate paper \cite{parabolic_ind_arxiv}.

The remainder of this introduction is structured as follows. In Subsection~\ref{subsect:intro_symm_mon}, we will describe in detail the ideas comprising Part~\ref{part:monoidal_structure}, as well as provide its outline. In Subsection~\ref{subsect:intro_abst_aut_cat}, we will describe the ideas and outline of Part~\ref{part:abst_aut_cat}. Finally, in Subsection~\ref{subsect:intro_add_consequences}, we will briefly describe Appendix~\ref{app:spin_struct}, which contains an application of the results of Part~\ref{part:abst_aut_cat}.

\date{\textbf{Acknowledgements: }The author would like to thank his advisor, Joseph Bernstein, for his insights and ideas about important problems. The author would also like to thank Shachar Carmeli for helping refine the ideas in this paper, as well as both Shachar Carmeli and Yiannis Sakellaridis for their great help improving the quality of this text.}

\subsection{Symmetric Monoidal Structure} \label{subsect:intro_symm_mon}

Let us begin with describing in detail the ideas in the first part of this paper.

Let $(\pi,V)$ be an automorphic representation for a reductive group $G$ over $F$. Automorphic $L$-functions for $V$ are usually defined via one of many different procedures. These procedures all share the following general form:
\begin{enumerate}
    \item Take a space of test functions (e.g., $S(\M_2(\AA))$, the space of matrix coefficients of $V$, the space of vectors from $V$, etc.).
    \item To each test function, assign a \emph{zeta integral} -- a meromorphic function in a complex variable $s$ in some right-half-plane.
    \item Take the GCD of the resulting zeta integrals, guided by some place-by-place prescription to deal with the overall normalization.
\end{enumerate}
The GCD we end up with is a meromorphic function in $s$, known as the \emph{$L$-function} for $(\pi,V)$.

The procedure above might seem rather ad hoc. Nevertheless, it gives correct results that conform to our expectations. The resulting $L$-functions admit analytic continuation, satisfy a functional equation, and match up with those produced from Galois representations (where the procedure to produce them is much more straight-forward).

The research detailed in this paper was the result of taking a different point of view on these automorphic $L$-functions. The idea is to study the entire space of zeta integrals, instead of just the $L$-functions that generate them. From this point of view, it is the collection of zeta integrals itself that is important, along with the relationships between such collections. This is analogous to the way in which a vector space can be studied without necessarily specifying a concrete basis: the space of zeta integrals is analogous to the vector space, and the $L$-function is a basis, or a generator, for this space.

This paper is the first in a series of papers, all based on this same philosophical idea, giving a variety of results.

In this paper specifically, we will demonstrate some hidden algebraic structure that can come out of comparing different spaces of zeta integrals. We will look at automorphic representations for $\GL(2)$, and specifically, the comparison of Godement--Jacquet and Jacquet--Langlands zeta integrals. These are two completely different constructions of the $L$-function for $\GL(2)$, both of which give the same result. We will will extend this to a canonical bijection between their spaces of zeta integrals, instead of just their $L$-functions. It will turn out that this point of view, where the focus is on the zeta integrals, has unexpected benefits.

To be more precise, this bijection between zeta integrals turns out to carry an algebraic structure: it induces a novel multiplicative structure on representations of $\GL(2)$. The main focus of this paper will be refining this algebraic structure and studying some of its consequences.

Let us recall each of the Godement--Jacquet and Jacquet--Langlands constructions in turn. Suppose that we are given an automorphic representation $(\pi,V)$. For Godement--Jacquet $L$-functions, one takes a smooth and compactly supported test function $\Psi\in S(\M_2(\AA))$, and integrates it along with a matrix coefficient $\beta\in V\otimes\widetilde{V}$ to obtain the zeta integral:
\[
    Z_\text{GJ}\left(\beta,\Psi,s\right)=\int_{\GL_2(\AA)} \beta(g)\Psi(g)\abs{\det(g)}^{s+\frac{1}{2}}\dtimes{g}.
\]
Here, $\widetilde{V}$ denotes the contragradient, and $\beta(g)$ denotes the contraction of $\pi(g)$ with the vector and co-vector defining $\beta$. The GCD of all these zeta integrals in an appropriate ring of holomorphic functions of $s\in\CC$ is the Godement--Jacquet $L$-function.

However, there is an alternative approach to defining $L$-functions for $\GL(2)$, by Jacquet--Langlands in the adelic formulation, based on a formulation by Hecke in more classical language (see also Remark~\ref{remark:hecke}). In this case, one takes an automorphic form $\phi\in V$ and integrates it along some subgroup to obtain the zeta integral:
\[
    Z_\text{JL}\left(\phi,s\right)=\int_{\AA^\times/F^\times} \phi\left(\begin{pmatrix}y & \\ & 1\end{pmatrix}\right)\abs{y}^{s-\frac{1}{2}}\dtimes{y}.
\]

\begin{remark} \label{remark:hecke}
    Let us make a couple of brief remarks about terminology. As stated above, the first variant of the zeta integrals we are referring to as ``Jacquet--Langlands'' zeta integrals was introduced by Hecke in the classical language of modular forms. Later, Jacquet and Langlands gave these integrals an adelic and local interpretation. Since in this text, we are emphasizing the local case and the role of the adelic language, the author has chosen to refer to these zeta integrals as ``Jacquet--Langlands'' zeta integrals, rather than Hecke zeta integrals (which might be further confused with the $L$-functions of Hecke \emph{characters}).
    
    In a similar fashion, for the sake of historical accuracy, one should also remark that what we are referring to as ``Godement--Jacquet'' zeta integrals are generalizations to $\GL(2)$ of the ideas laid out by Tate in his thesis. Tate's ideas were originally stated in the adelic language, but only for $\GL(1)$.
\end{remark}

Let us go back to our main line of discussion and elaborate on the structure relating the two kinds of zeta integrals. Our proof of the equivalence between Godement--Jacquet and Jacquet--Langlands $L$-functions will directly associate the data of a specific Godement--Jacquet zeta integral (a pair $\Psi\otimes\beta$) with the data of a specific Jacquet--Langlands zeta integral (a form $\phi$). This isomorphism of ``modules'' of zeta integrals is a categorification of the usual equivalence of $L$-functions.

Given the description above, one might be led to suspect the existence of some kind of \emph{algebra structure} on the space $\cS=S(\GL_2(F)\backslash\GL_2(\AA))$ of automorphic forms. After all, we are associating pairs of forms $\beta\in V\otimes\widetilde{V}$ with single forms $\phi\in V$. Amazingly, that turns out to more or less be the case! There is, however, one caveat: the symmetric monoidal structure that one needs to use on the category of representations of $\GL_2(\AA)$ is not the standard one, but instead a construction (related to the theta correspondence) using the space of test functions on $\M_2(\AA)$.

The author would like to note that the papers \cite{Y_times_of_triv} and in particular \cite{GJ_L_funcs_vs_test_functions} study aspects of the theta correspondence that are related to this symmetric monoidal structure (although without referring to it as such).

The construction of this symmetric monoidal structure is the topic of the first part of this paper, while the algebra of automorphic functions is constructed in the second part.

A rough outline of Part~\ref{part:monoidal_structure} is as follows. We will begin by establishing the aforementioned correspondence between Godement--Jacquet and Jacquet--Langlands $L$-functions in Section~\ref{sect:GJ_vs_JL}, focusing on the local picture. This will be done by looking at the space $Y=S(\M_2(F)\times F^\times)$ of smooth compactly supported functions, where $F$ is a non-Archimedean local field. This space carries a left $G=\GL_2(F)$-action by left-multiplication on $\M_2(F)$ and the determinant action of $G$ on $F^\times$. Similarly, it also carries a right $G$-action by right-multiplication on $\M_2(F)$ and the determinant action on $F^\times$. The key observation is that $Y$ additionally carries a third, \emph{hidden}, $G$-action that commutes with these two, coming from the Weil representation. The third action will allow us to directly relate the two kinds of zeta integrals. 

In Section~\ref{sect:T_tensor}, we will try to understand the object $Y$. It turns out that a useful way to do this is to change our perspective, and think of the object $Y$ as defining a tensor product operation given by:
\[
    V\oY V'\coloneqq V\otimes_G Y \otimes_G V',
\]
where $\otimes_G$ denotes the relative tensor product over $G$, i.e. the universal space through which $G$-invariant pairings factor. The operation $\oY$ turns out to be a unital, associative, symmetric monoidal structure on the category $\Mod(G)$ of smooth representations of $\GL_2(F)$. In fact, the structure of $\Mod(G)$ equipped with $\oY$ is slightly richer: $\Mod(G)$ can be turned into a commutative Frobenius algebra in $\CC$-linear presentable categories, in an appropriately categorified sense.

\subsection{Abstractly Automorphic Representations} \label{subsect:intro_abst_aut_cat}

Let us describe in detail the ideas in the second part of this paper.

In Part~\ref{part:abst_aut_cat}, we will go back to the global picture. We will show that under the symmetric monoidal structure $\oY$, the space $\cS=S(\GL_2(F)\backslash\GL_2(\AA))$ of automorphic functions now carries a multiplication operation, via a kind of theta lifting. The co-domain of this multiplication is not quite $\cS$. However, if we denote by $\cI$ the subspace of $\cS$ which is orthogonal to all functions of the form $\chi(\det(g))$, then $\cI$ is closed under multiplication. This turns out to induce on $\cI$ a structure of unital associative commutative algebra with respect to $\oY$. This will give us an algebra of automorphic functions.

We will use this construction, the algebra structure on $\cI$, to construct a category of what we refer to as abstractly automorphic representations. Specifically, given a monoidal category $(\C,\otimes)$ and an algebra object $A\in\C$ with respect to its monoidal structure $\otimes$, one can construct the category of $A$-modules in $\C$. This is given by the category of objects $M\in\C$, along with action maps
\[
    a\co A\otimes M\ra M
\]
satisfying the usual axioms.

By applying this construction to the algebra object $\cI$ with respect to the monoidal structure $\oY$, we obtain the category of $\cI$-modules in $\Mod(\GL_2(\AA))$, which we denote by $\Mod^\aut(\GL_2(\AA))$, and refer to as the category of \emph{abstractly automorphic representations}.

The category $\Mod^\aut(\GL_2(\AA))$ is canonically equipped with a forgetful functor
\[
    \Mod^\aut(\GL_2(\AA))\ra\Mod(\GL_2(\AA)),
\]
which turns out to be fully faithful in our case. This means that being an abstractly automorphic representation (or, more concretely, an $\cI$-module) is a mere \emph{property} of a $\GL_2(\AA)$-module. We choose to interpret this property as an automorphicity property for $\GL_2(\AA)$-modules, which we call \emph{abstract automorphicity}.

The question of what, exactly, is an automorphic representation is an old one. It is generally accepted that an irreducible representation is automorphic if it is a subquotient of an appropriate space $\widetilde{\cS}$ of functions on $\GL_2(F)\backslash\GL_2(\AA)$ (this is, for example, the definition used in \cite{what_is_aut_rep}). However, these irreducible automorphic representations do not seem to be known to fit into any natural category.

We propose that the category $\Mod^\aut(\GL_2(\AA))$ of abstractly automorphic representations can be taken as an answer to this question. A key property of the abelian category $\Mod^\aut(\GL_2(\AA))$, which helps justify its name, is that its irreducible objects are precisely the irreducible automorphic representations in the above sense (although the proof of this result lies outside the scope of this paper, see \cite{parabolic_ind_arxiv} for the proof). In particular, this seems to allow the use of more categorical methods in the theory of automorphic representations.

The detailed study of the category $\Mod^\aut(\GL_2(\AA))$ lies outside the scope of this paper. Nevertheless, it appears to be much better behaved than the ambient category $\Mod(\GL_2(\AA))$ of smooth $\GL_2(\AA)$-modules. For example, although we will only show this in a follow-on paper \cite{parabolic_ind_arxiv}, the category $\Mod^\aut(\GL_2(\AA))$ admits a decomposition into components that are either induced from $\GL(1)\times\GL(1)$ (corresponding to Eisenstein series) or cuspidal in nature (corresponding to cuspidal automorphic representations), in analogy to the local theory of representations of $\GL(2)$.

The category $\Mod^\aut(\GL_2(\AA))$ will be described in Subsection~\ref{subsect:Y_alg_sheaves}.

\begin{example}
    One way to think of the multiplicative structure on $\cI$ is to compare it to the commutative case.
    
    Let $H$ be a commutative group, and let $\Gamma\subseteq H$ be a subgroup. Then then the space of functions on $\Gamma\backslash H$ admits an algebra structure, given by the \emph{convolution product} with respect to $H$.
    
    If we think of $H$ as being analogous to $\GL_2(\AA)$, and $\Gamma$ as being analogous to $\GL_2(F)$, then we can justify thinking of the algebra structure of $\cI$ as being a kind of convolution product. In a sense, the non-standard symmetric monoidal structure $\oY$ allows inherently commutative phenomena to manifest for the non-commutative group $\GL_2(\AA)$. See also Remarks~\ref{remark:oY_is_like_comm} and~\ref{remark:mult_is_like_conv}.
\end{example}

\begin{remark}
    Let us make a much more speculative remark.
    
    The author believes that it would be interesting to generalize the results of this paper to groups other than $\GL(2)$. At the very least, if $B$ is a quaternion algebra over $F$, then it is possible to place the theta correspondence between $B^\times$ and $\GL(2)$ into the same framework of symmetric monoidal and module structures as this paper. The author intends to do so in a subsequent paper.
    
    Regardless, the author has not yet been able to take this theory beyond $\GL(2)$ and its inner forms. However, it is possible to formulate some expectations for such a generalization. Essentially, for a reductive group $G$, one can hope to construct some category $\Mod^\aut(G(\AA))$ of abstractly automorphic $G(\AA)$-modules. We can expect that there be a realization functor:
    \[
        \Mod^\aut(G(\AA))\ra\Mod(G(\AA)),
    \]
    but it seems too harsh to demand that the realization functor be fully faithful outside the case of $G=\GL(n)$.
\end{remark}

\subsection{Additional Consequences} \label{subsect:intro_add_consequences}

Before ending this introduction, let us mention one more interesting consequence of our construction of the algebra $\cI$ of automorphic functions. Consider the space $\cI_{/\SL_2(\O_\AA)}$ of co-invariants of $\cI$ with respect to the action of $\SL_2(\O_\AA)$, where $\O_\AA\subseteq\AA$ is the subring of all adeles which are integral at all places. Note that $\cI_{/\SL_2(\O_\AA)}$ is the space of all smooth compactly supported \emph{spherical} automorphic functions that are orthogonal to all one-dimensional characters (note that because $\SL_2(\O_\AA)$ is compact, it does not matter whether we take invariants or co-invariants).

The algebra structure of the space $\cI$ with respect to $\oY$ will, as a special case, automatically equip the space $\cI_{/\SL_2(\O_\AA)}$ with the structure of a commutative algebra with respect to the \emph{usual} tensor product $\otimes$ (up to choice of a piece of global data called an unramified spin structure, see Appendix~\ref{app:spin_struct}). This algebra structure is different from the usual point-wise multiplication, and respects the action of the center of the category $\Mod(\GL_2(\AA))$. This can be thought of as a kind of convolution product for spherical automorphic functions, in the following sense.

The space $\cI_{/\SL_2(\O_\AA)}$ of spherical automorphic functions has a Fourier transform. Specifically, the Hecke operators act on this space, and therefore it defines a module on the spectrum of all Hecke operators. Classically, one can try to identify this module with the structure sheaf of its support on the spectrum of the Hecke operators, and thus associate to each function $\phi\in\cI_{/\SL_2(\O_\AA)}$ a function on the unramified automorphic spectrum. However, this association is non-canonical. This manifests as our freedom to choose different unramified vectors at each unramified automorphic representation. In particular, it makes no sense to talk about the convolution of two functions $\phi,\phi'\in\cI_{/\SL_2(\O_\AA)}$. 

However, the commutative algebra structure on $\cI_{/\SL_2(\O_\AA)}$ allows us to make this canonical. Given an unramified spin structure, we can \emph{canonically} identify functions $\phi\in\cI_{/\SL_2(\O_\AA)}$ with functions on the unramified automorphic spectrum. In the most abstract terms, one can think of the algebraic spectrum $\Spec{\cI_{/\SL_2(\O_\AA)}}$ as a space, and of elements of $\cI_{/\SL_2(\O_\AA)}$ as functions on it. With this identification, the multiplication of $\cI_{/\SL_2(\O_\AA)}$ corresponds to point-wise multiplication on the spectrum, and is thus a convolution product.

This multiplicative structure will be described in Appendix~\ref{app:spin_struct}.

\begin{remark}
    One can speculate about the relationship of our construction with the geometric Langlands conjectures. Let $C$ be a smooth proper curve over $\CC$. Then the geometric Langlands conjectures predict the equivalence of a certain category of D-modules on $\Bun_{\GL_2}(C)$ with a certain category of sheaves of modules over $\mathrm{LocSys}_{\widecheck{\GL_2}}(C)$. Now, the category of D-modules over $\Bun_{\GL_2}(C)$ carries a symmetric monoidal structure, corresponding after de-categorification to the point-wise multiplication of unramified automorphic functions.
    
    However, one can speculate about giving this category a symmetric monoidal structure inherited from the other side of this equivalence, e.g. from the $!$-tensor product of $\mathrm{LocSys}_{\widecheck{\GL_2}}(C)$. This should correspond to a ``spectral'' product, or a convolution product, on unramified automorphic forms after de-categorification. The author believes that it would be interesting to attempt to relate this structure to the convolution product on $\cI_{/\SL_2(\O_\AA)}$ described above.
\end{remark}

\part{Symmetric Monoidal Structure} \label{part:monoidal_structure}

\section{Godement--Jacquet Vs. Jacquet--Langlands} \label{sect:GJ_vs_JL}

\subsection{Introduction}

Let $F$ be a non-Archimedean local field with $\chr(F)\neq 2$. In this section, we will attempt to give a new point of view on the equivalence between the Godement--Jacquet $L$-functions of representations of $G=\GL_2(F)$ and the Jacquet--Langlands $L$-functions of the same representations.

Consider the $G$-bi-module $S(\GL_2(F))$ of smooth and compactly supported functions on $\GL_2(F)$, and fix the standard Haar measure $\dtimes{g}$ on $\GL_2(F)$ such that the volume of a maximal compact subgroup is $1$. The spectral decomposition of this bi-module is well-known; for a pair of irreducible representations $(\pi,V)$ and $(\pi',V')$, we have
\[
    \tilde{V}\otimes_G S(\GL_2(F))\otimes_G V'\xrightarrow{\sim}\CC,
\]
if $V\cong V'$, and
\[
    \tilde{V}\otimes_G S(\GL_2(F))\otimes_G V'=0
\]
otherwise, where $\tilde{V}$ is the contragradient of $V$, considered as a right $G$-module.

In other words, every $G$-bi-module $V\otimes_\CC\tilde{V}$ appears ``once'' in the spectral decomposition of $S(\GL_2(F))$. To truly specify this decomposition, one needs to also discuss the dependence of this decomposition on the continuous part of the spectrum, but let us ignore this for now.

A question we can ask, then, is what is the spectral decomposition of the space $S(\M_2(F))$. The answer to this question should be fairly deep, as it relates to the Godement--Jacquet construction of $L$-functions. The simplest aspect of this we can ask about is to compute the spaces
\[
    \tilde{V}\otimes_G S(\M_2(F))\otimes_G V,
\]
with $(\pi,V)$ irreducible. Here, the Schwartz space $S(\M_2(F))$ is the space of smooth and compactly supported functions on $\M_2(F)$. We restrict to the case where $(\pi,V)$ is generic, i.e. has a Kirillov model. Note that we are also only looking at the diagonal part of the spectral decomposition, which we justify by observing that the two actions of the Bernstein center of $G$ on $S(\M_2(F))$ coincide (this follows because the two actions of the center on $S(G)$ coincide, as well as the fact that $S(\M_2(F))$ embeds in the contragradient of $S(G)$). This means that off-diagonal components
\[
    \tilde{V}\otimes_G S(\M_2(F))\otimes_G V',
\]
with $V'\neq V$ generic are $0$.

The space $\tilde{V}\otimes_G S(\M_2(F))\otimes_G V$ turns out to be a one-dimensional vector space. However, this answer in not too meaningful. We would like to say \emph{which} one-dimensional vector space this is. In order to be able to make interesting statements about it, we should give it some more structure. So, consider instead the space $Y=S(\M_2(F)\times F^\times)$. We let $G$ act on $\M_2(F)\times F^\times$ from both sides as:
\begin{equation} \label{eq:action_on_matrices}
    h\cdot(g,y)\cdot h'=\left(hg h',\,y\cdot\det(h h')^{-1}\right),
\end{equation}
which turns $Y=S(\M_2(F)\times F^\times)$ into a $G$-bi-module. Therefore, we want to compute the space
\[
    \tilde{V}\otimes_G Y\otimes_G V,
\]
which now has an extra structure of $F^\times$-module, via the action of $F^\times$ on the second term in the product $\M_2(F)\times F^\times$. The desired space $\tilde{V}\otimes_G S(\M_2(F))\otimes_G V$ is canonically the space of co-invariants of $\tilde{V}\otimes_G Y\otimes_G V$ under this $F^\times$-action.

Spectrally, the operation sending the $G$-bi-module $S(\M_2(F))$ to the $F^\times$-module $\tilde{V}\otimes_G S(\M_2(F)\times F^\times)\otimes_G V$ can be described as follows. Given a character $\chi\co F^\times\ra\CC^\times$, the component of $\tilde{V}\otimes_G S(\M_2(F)\times F^\times)\otimes_G V$ at $\chi$ is the same as the component of $S(\M_2(F))$ at the exterior product of $\pi\times(\chi\circ\det)$ with itself.

In order to have an idea of the answer, we can start by considering the $G\times G$-sub-module $Y^\circ=S(\GL_2(F)\times F^\times)$ of $Y$. It is easy to see that there is a canonical isomorphism
\begin{equation} \label{eq:spectrum_of_GL_2}
    \mu\co\tilde{V}\otimes_G Y^\circ\otimes_G V \xrightarrow{\sim}S(F^\times)
\end{equation}
given by the formula
\[
    \tilde{v}\otimes\Psi\otimes v \mapsto \int_{\GL_2(F)}\left<\tilde{v},\pi(g)v\right>\cdot\Psi\left(g,y\det(g)^{-1}\right)\dtimes{g}.
\]
Since $\M_2(F)$ is some completion of $\GL_2(F)$, we can expect $\tilde{V}\otimes_G Y\otimes_G V$ to be some extension of $S(F^\times)$.

\subsubsection{Statement of the Main Result}

We want to decompose the $G\times G$-module $Y$. It turns out that we can do this by introducing a \emph{third} action of $G$ on this space, coming from the Weil representation.

Fix a non-trivial additive character $e\co F\ra\CC^\times$, and let $\theta$ be the corresponding character on the subgroup $U=U_2(F)\subseteq\GL_2(F)$ of upper triangular unipotent matrices, defined by:
\begin{align*}
    \theta\co U & \ra\CC^\times \\
    \theta\left(\begin{pmatrix}1 & u \\ & 1\end{pmatrix}\right) & =e(u).
\end{align*}
For any generic irreducible $G$-module $V$, we denote by $V(n)$ the twist of $V$ by $\abs{\det(\cdot)}^{n}$.

\begin{recollection} \label{recollection:kirillov}
    Recall that for a generic irreducible $G$-module $V$, the \emph{Kirillov model} $\mathcal{K}(V)$ of $V$ is defined as follows.
    
    Let $P=\left\{\begin{pmatrix}* & * \\ & 1\end{pmatrix}\right\}\subseteq G$ be the mirabolic subgroup. Let $\Phi^-\co\Mod(P)\ra\Vect$ be the functor that takes a $P$-module to its $\theta$-equivariant quotient, for the non-trivial character $\theta$ on the unipotent radical of $P$ (see \cite{derivatives_of_p_adic_repsI} for this notation). Let $\Phi^+\co\Vect\ra\Mod(P)$ be its left adjoint, given by $\theta$-equivariant induction with compact support (see Proposition~3.2 of \cite{derivatives_of_p_adic_repsI}).
    
    It is well known that the space $\Phi^-\tilde{V}$ is one dimensional. A choice of vector $\CC\ra\Phi^-\tilde{V}$ gives, by adjunction, a map of $P$-modules:
    \[
        S(F^\times)\cong\Phi^+\CC\ra\tilde{V}.
    \]
    Taking the dual map, we obtain a morphism of $P$-modules:
    \[
    V\ra\widetilde{S(F^\times)},
    \]
    where $\widetilde{S(F^\times)}$ is identified with the space of smooth functions on $F^\times$, and $P$ acts on $\widetilde{S(F^\times)}$ by:
    \[
        \begin{pmatrix}a & b \\ & 1\end{pmatrix}\cdot f(y)=e(by)f(ay).
    \]
    
    The map $V\ra\widetilde{S(F^\times)}$ is the unique non-zero map of $P$-modules from $V$ to $\widetilde{S(F^\times)}$, up to scalar. Such maps are injective, and have the same image. This image is called the Kirillov model $\mathcal{K}(V)$ of $V$, and it acquires a canonical $G$-action from $V$ \cite[Theorem~6.7.2]{aut_reps_book_I}.
    
    There is a related notion, called the \emph{Whittaker model} of $V$. This model is obtained by taking the map $S(F^\times)\ra\tilde{V}$ of $P$-modules above, and extending it to a map of $G$-modules:
    \[
    \one_\Ydown\to\tilde{V}
    \]
    by letting $\one_\Ydown$ be the induction with compact support of $S(F^\times)$ from $P$ to $G$ (see also Construction~\ref{const:Y_unit}). The space space $\one_\Ydown$ is called the \emph{Whittaker space}, and the image of the resulting dual map $V\to\widetilde{\one_\Ydown}$ is called the Whittaker model of $V$. Our notation for the Whittaker space is non-standard.
    
\end{recollection}

\begin{theorem} \label{thm:middle_action}
    There exists an additional canonical $G$-action $\tau$ on $Y$. This action is uniquely determined by the following properties:
    \begin{enumerate}
        \item The action $\tau$ commutes with the existing $G$-actions on $Y$ from the left and the right.
        \item \label{item:V_is_idemp} For every generic irreducible $(\pi,V)$, there is an isomorphism of $G$-modules
        \[
            \nu\co\tilde{V}\otimes_G Y\otimes_G V\xrightarrow{\sim}\mathcal{K}(V(-1))(1).
        \]
        \item \label{item:compatibility_mu_nu} The isomorphism $\nu$ above fits into the commutative diagram
        \begin{equation} \label{eq:compatible_M_2_GL_2} \xymatrix{
            \tilde{V}\otimes_G Y^\circ\otimes_G V \ar[d] \ar[r]^-\sim_-\mu & S(F^\times) \ar[d] \\
            \tilde{V}\otimes_G Y\otimes_G V \ar[r]^-\sim_-\nu & \mathcal{K}(V(-1))(1),
        }\end{equation}
        where $\mu$ is as in Equation~\eqref{eq:spectrum_of_GL_2}, the left vertical map is induced from the natural inclusion $Y^\circ\subseteq Y$, and the right vertical map is the inclusion of $S(F^\times)$ into the Kirillov model $\mathcal{K}(V(-1))$ of $V(-1)$.
    \end{enumerate}
\end{theorem}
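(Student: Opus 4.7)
The plan is to construct the action $\tau$ explicitly via the (similitude) Weil representation attached to the quadratic form $\det$ on $\M_2(F)$, and then to verify properties (1)--(3) in turn.

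First I would construct $\tau$. Observe that $(\M_2(F),\det)$ is a $4$-dimensional quadratic space, and the left--right action of $G\times G$ on $\M_2(F)$ is by orthogonal similitudes with similitude factor $\det(h)\det(h')$. The classical Weil representation provides an action of $\SL_2(F)$ on $S(\M_2(F))$ commuting with $O(\det)$, and by introducing the auxiliary $F^\times$ coordinate to track the similitude factor this extends to an action of $\GL_2(F)$ on $Y = S(\M_2(F)\times F^\times)$; this is the candidate for $\tau$. Property (1) then follows from the commuting-action property of the dual pair: the image of $\GL_2\times\GL_2$ inside $\GO(\det)$ commutes with the $\tau$-action.

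The heart of the proof is verifying property (2), namely $\tilde V\otimes_G Y\otimes_G V\cong\mathcal{K}(V(-1))(1)$ for generic irreducible $(\pi,V)$. I would proceed by exploiting the $G\times G$-equivariant stratification of $\M_2(F)$ by rank. There is a short exact sequence
\[
    0\to Y_{\le 1}\to Y\to Y^\circ\to 0,
\]
where $Y^\circ=S(\GL_2(F)\times F^\times)$ is the rank-$2$ open stratum and $Y_{\le 1}$ collects functions supported on matrices of rank $\le 1$, itself filtered by the rank-$0$ piece $S(\{0\}\times F^\times)$. For generic $V$ the rank-$0$ contribution vanishes, since such $V$ has no non-zero $G$-coinvariants. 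The rank-$2$ quotient yields $S(F^\times)$ via the given isomorphism $\mu$. The main computation is thus the rank-$1$ stratum: matrices of rank $1$ form a single $G\times G$-orbit whose stabilizer is conjugate on each side to (a variant of) the mirabolic subgroup $P$, and its contribution to the coinvariants localizes to a $\Phi^+\Phi^-$-type expression as in Recollection~\ref{recollection:kirillov}. Assembling the rank-$1$ and rank-$2$ contributions, with the $\abs{\det}$-twists forced by the similitude factor carried by the $F^\times$-coordinate, recovers precisely the inclusion $S(F^\times)\hookrightarrow\mathcal{K}(V(-1))(1)$ of compactly supported functions into the Kirillov model. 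Property (3) then comes out for free, since by construction the restriction of $\nu$ to the image of $Y^\circ$ is the composition of $\mu$ with this inclusion.

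For uniqueness, any action $\tau'$ satisfying (1) descends to each coinvariant space $\tilde V\otimes_G Y\otimes_G V$, and by (2) the descended action must equal the Kirillov $G$-action. Since the generic spectrum captures the diagonal part of the $G\times G^{\mathrm{op}}$-spectrum of $Y$ (by the Bernstein-center observation recalled earlier in the section), this determines $\tau$. The main obstacle I anticipate is the rank-$1$ computation: correctly identifying the Weil-representation action on the rank-$1$ boundary stratum with the extension of the mirabolic action on $S(F^\times)$ to the full Kirillov $G$-action requires careful bookkeeping of the $\abs{\det}$-twists, and hinges on the explicit form of the Weil representation on $S(\M_2(F))$.
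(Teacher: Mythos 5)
The construction of $\tau$ via the Weil representation and the verification of property (1) are essentially what the paper does. However, your argument for property (2) has a real gap.

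First a minor slip: your short exact sequence is reversed. Since $\GL_2(F)\times F^\times$ is open in $\M_2(F)\times F^\times$, the Schwartz space $Y^\circ$ is a \emph{submodule} of $Y$, so the sequence is $0\to Y^\circ\to Y\to Y/Y^\circ\to 0$, with the boundary strata in the quotient.

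The more serious issue is that the stratification alone does not pin down the $G$-module structure. Even granting that the rank-$2$ stratum contributes $S(F^\times)$ via $\mu$ and the rank-$1$ stratum contributes (a twist of) $\Jac(\tilde V)\otimes_{F^\times\times F^\times}\Jac(V)$ (this is exactly Lemma~\ref{lemma:mirabolic_jacquet}), the filtration $0\to S(F^\times)\to\mathcal{K}(V)\to(\mathrm{quotient})\to 0$ has the \emph{same} $P$-structure on the sub $S(F^\times)$ for every generic irreducible $V$; what distinguishes one Kirillov model from another is the extension, i.e., the full $G$-action. Your argument assembles the graded pieces but never explains why the $\tau$-action glues them into the Kirillov $G$-action for $V(-1)(1)$ rather than for some other generic representation, or into no $G$-extension at all. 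The sentence ``Assembling the rank-$1$ and rank-$2$ contributions \ldots recovers precisely the inclusion $S(F^\times)\hookrightarrow\mathcal{K}(V(-1))(1)$'' is where the gap sits.

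The paper avoids this by a multiplicity-counting argument rather than a stratification. Lemma~\ref{lemma:center_is_compatible} shows that the Bernstein centers for all three $G$-actions on $Y$ coincide, so for irreducible principal series or supercuspidal $V$ the module $\tilde V\otimes_G Y\otimes_G V$ is automatically $V$-isotypic; one then computes $\Phi^-$ to count the multiplicity, and Lemma~\ref{lemma:mirabolic_rest} (identifying $\Phi^+\Phi^-Y$ with $Y^\circ$) reduces this count to the known isomorphism $\mu$ on $Y^\circ$. Your rank-$1$ computation does appear in the paper, but only for the Steinberg case, where the Bernstein center cannot distinguish $V$ from the trivial representation and one must check the Jacquet module directly. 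To make your stratification argument work as the main engine, you would either have to verify explicitly that $\tau$ acts on the extension by the Kirillov $G$-action (difficult, since the Kirillov action has no closed form), or import the Bernstein-center isotypicity argument first, at which point the stratification becomes unnecessary.

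Your uniqueness argument, by contrast, is sound and essentially the same as what the paper implicitly relies on.
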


\begin{remark}
    Note that in Item~\ref{item:V_is_idemp} above, the right hand side $\mathcal{K}(V(-1))(1)$ is isomorphic to $V$ as a representation of $G$, but ``rigidified'' via the use of Kirillov models (in the sense that while an irreducible automorphic representation is only unique up to automorphism, the Kirillov model is a canonical realization of that representation).
\end{remark}

\begin{remark}
    Note that the right vertical map appearing in Diagram~\eqref{eq:compatible_M_2_GL_2} is not $P$-equivariant. Rather, this map is twisted-$P$-equivariant, with respect to the character $\abs{\det(\cdot)}$.
\end{remark}

We refer to the $G$-action on $Y$ introduced in Theorem~\ref{thm:middle_action} as the \emph{middle} $G$-action (in contrast with the left or right $G$-actions). The isomorphism $\nu$ in Theorem~\ref{thm:middle_action} is our answer to the questions posed above. That is, $\nu$ defines an isomorphism:
\[
\tilde{V}\otimes_G S(\M_2(F))\otimes_G V\cong\mathcal{K}(V(-1))(1)_{/F^\times}.
\]

\subsubsection{Compatibility With \texorpdfstring{$L$}{L}-Functions}

The isomorphism $\nu$ above has one more remarkable property: it intertwines Godement--Jacquet and Jacquet--Langlands zeta integrals (and thus shows that these two methods give the same $L$-function). Explicitly, we make the following claim.

\begin{proposition}
    Let $\Psi\otimes f\in S(\M_2(F))\otimes_\CC S(F^\times)=Y$ be the function $\Psi(g)f(y)$, and let
    \[
        j\co\mathcal{K}(V(-1))(1)\xrightarrow{\sim}\mathcal{K}(V)
    \]
    be the canonical isomorphism. Then
    \[
        Z_\text{JL}\left(j\circ\nu(\beta\otimes\Psi\otimes f),s\right)=Z_\text{GJ}\left(\beta,\Psi,s\right)\cdot\int_{F^\times} f(y)\abs{y}^{s+\frac{1}{2}}\dtimes{y},
    \]
    where
    \begin{align*}
        Z_\text{JL}\left(w,s\right) & =\int_{F^\times} w(y)\abs{y}^{s-\frac{1}{2}}\dtimes{y}, \\
        Z_\text{GJ}\left(\beta,\Psi,s\right) & =\int_{\GL_2(F)} \beta(g)\Psi(g)\abs{\det(g)}^{s+\frac{1}{2}}\dtimes{g} \\
    \end{align*}
    are the Jacquet--Langlands and Godement--Jacquet zeta integrals, respectively.
\end{proposition}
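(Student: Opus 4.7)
The plan is to reduce to the case $\Psi\in S(\GL_2(F))$ via a dimensionality argument and then compute directly using Theorem~\ref{thm:middle_action}(\ref{item:compatibility_mu_nu}). The key observation is that, as noted at the start of this section, $\tilde{V}\otimes_G S(\M_2(F))\otimes_G V$ is one-dimensional; since $\tilde{V}\otimes_G Y\otimes_G V$ is its ``unfolding'' along the $F^\times$-action on the second coordinate of $\M_2(F)\times F^\times$, each $F^\times$-isotypic component of $\tilde{V}\otimes_G Y\otimes_G V$ is at most one-dimensional. A direct substitution shows that both sides of the desired identity scale by $|y_0|^{s+1/2}$ when $f$ is replaced by $y\mapsto f(y_0^{-1}y)$: for the right hand side this is a change of variables in the Mellin transform, while for the left hand side it uses the $F^\times$-equivariance of $\nu$ together with the extra factor of $|y|$ coming from $j$. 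Both sides therefore define $s$-meromorphic linear functionals landing in the same one-dimensional $|\cdot|^{s+1/2}$-eigenspace, so it suffices to check agreement on any single non-zero element.

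For $\Psi\in S(\GL_2(F))$, the tensor $\Psi\otimes f$ lies in $Y^\circ$, and Theorem~\ref{thm:middle_action}(\ref{item:compatibility_mu_nu}) combined with Equation~\eqref{eq:spectrum_of_GL_2} identifies $\nu(\beta\otimes\Psi\otimes f)$ with the function $y\mapsto\int_{\GL_2(F)}\beta(g)\Psi(g)f(y\det(g)^{-1})\dtimes{g}$ inside $S(F^\times)\subseteq\mathcal{K}(V(-1))(1)$. Unwinding the canonical identification $V(-1)(1)=V$ through a common Whittaker functional $\lambda$ shows that $\mathcal{K}(V(-1))$ is realized as $\{y\mapsto|y|^{-1}W(y):W\in\mathcal{K}(V)\}$, so $j$ acts on functions by multiplication by $|y|$. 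Hence
\[
    j\circ\nu(\beta\otimes\Psi\otimes f)(y)=|y|\int_{\GL_2(F)}\beta(g)\Psi(g)f(y\det(g)^{-1})\dtimes{g}.
\]
Feeding this into $Z_\text{JL}(\cdot,s)$, exchanging the order of integration by Fubini in the region of absolute convergence, and substituting $y=y'\det(g)$ immediately give $Z_\text{GJ}(\beta,\Psi,s)\cdot\int_{F^\times}f(y')|y'|^{s+1/2}\dtimes{y'}$.

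The main obstacle is the equivariance/dimensionality bookkeeping in the first paragraph. Concretely, one must check that the right hand side is invariant under the tensor relations defining $\tilde{V}\otimes_G Y\otimes_G V$ (which amounts to verifying that the $|\det|^{-(s+1/2)}$-twist picked up when moving an element of $G$ across $Z_\text{GJ}$ is exactly compensated by the twist in the Mellin transform under the shift of $f$), and then that the $|y|$ from $j$ combines with $Z_\text{JL}$ and the $\nu$-intertwined $F^\times$-action to produce the same weight $|\cdot|^{s+1/2}$. Once both sides are seen to lie in this common one-dimensional isotype and the direct computation above shows they agree on $\Psi\in S(\GL_2(F))$ where the functional is generically non-zero, the identity extends to all of $S(\M_2(F))$.
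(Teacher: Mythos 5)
Your proof is correct, but it takes a genuinely different and somewhat longer route than the paper's. The paper proves this proposition by referencing Remark~\ref{remark:zeta_for_Y}, which gives the explicit closed formula
\[
\nu(\tilde{v}\otimes\Psi\otimes v)(y)=\int_{\M_2(F)}\langle\tilde{v},\pi(g)v\rangle\,\Psi\left(g,\det(g)^{-1}y\right)\dtimes{g}
\]
valid for \emph{all} $\Psi\in Y$, not just $\Psi\in Y^\circ$; the key observation is that this integral converges unconditionally because the support condition on $\Psi$ in the $F^\times$-coordinate bounds $\det(g)$ away from $0$ for fixed $y$, so the integrand is compactly supported in $\GL_2(F)$. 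Once one has this formula, the proposition follows by plugging into $Z_\text{JL}$, pulling the $|y|$ from $j$ into the exponent, and substituting $y\mapsto y'\det(g)$ --- exactly the Fubini computation you perform in your second paragraph. Your proof instead replaces the observation ``the formula for $\mu$ extends verbatim to $Y$'' by a multiplicity-one argument: both sides factor through the $|\cdot|^{\pm(s+1/2)}$-equivariant coinvariants of $\tilde{V}\otimes_G Y\otimes_G V\cong\mathcal K(V(-1))(1)$, which is at most one-dimensional for generic $s$ because the torus-coinvariants of a Kirillov model have multiplicity one away from the poles of the Jacquet module; this pins both sides down up to a scalar, which you compute on $Y^\circ$. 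The trade-off: the paper's route requires a moment's thought about why the formula for $\nu$ converges on $Y$, while yours requires invoking a standard but nontrivial local multiplicity-one statement plus meromorphic continuation to handle the finitely many $s$ where the isotype may jump in dimension. Both are valid; yours is more structural, the paper's more direct. One cosmetic point: phrases like ``functionals landing in the same one-dimensional eigenspace'' conflate the isotype inside the representation with the dual space of equivariant functionals, and it would be cleaner to say explicitly that the space of $|\cdot|^{-(s-1/2)}$-equivariant functionals on $\mathcal K(V)$ is one-dimensional for generic $s$, which is what you actually use.
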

This will follow immediately from Remark~\ref{remark:zeta_for_Y} below.

\begin{remark}
    With the language and tools developed in Section~\ref{sect:T_tensor}, we will be able to re-formulate Theorem~\ref{thm:middle_action} in a much cleaner way, as well as give a slightly different proof.
    
    Specifically, it will turn out that we can use the space $Y$ to define a symmetric monoidal structure $\oY$ on the category of smooth $G$-modules. The essence of Theorem~\ref{thm:middle_action} is that generic irreducible representations are idempotent with respect to this monoidal structure. This will follow at once from the general principle that in an abelian symmetric monoidal category whose tensor product is right exact, surjections from the unit $\one_\Ydown\twoheadrightarrow V$ turn $V$ into an idempotent. This is essentially the same argument showing that for a commutative algebra $A$, module quotients of $A$ are also algebra quotients of $A$. See Example~\ref{example:irr_reps_are_algs} for details.
\end{remark}

\subsection{The Weil Representation}

In this subsection, we will construct the middle action on $Y$, which we will use to prove Theorem~\ref{thm:middle_action}. This is done by turning $S(\M_2(F))$ into a Weil representation for an appropriate metaplectic group. The action of the metaplectic group is then used to construct the hidden action on $Y$ directly. Once this action is constructed, proving its properties is relatively straightforward. After proving Theorem~\ref{thm:middle_action}, we will give a few additional remarks about the middle action and its properties.

We begin by constructing the symplectic space which will give us the appropriate representation. The idea is to find a space such that $\M_2(F)$ is a Lagrangian subspace of it.

Recall that the pairing $(m,m')\mapsto\left<m,m'\right>=\tr(m)\tr(m')-\tr(mm')$ is the polarization (or multi-linearization) of the quadratic map $m\mapsto\det(m)$ on $\M_2(F)$.
\begin{construction} \label{const:G3_into_symp}
    Consider the vector space $U=\M_2(F)$, and let $W$ be a two dimensional symplectic vector space. We turn $U\otimes W$ into a symplectic vector space via
    \[
        \left<m\otimes v,m'\otimes v'\right>=(v\wedge v')\cdot\left<m,m'\right>.
    \]
    
    In particular, we get a map
    \[
        G^{3,\det=1}=(\GL_2(F)\times\GL_2(F)\times\GL_2(F))^1\ra\Symp(U\otimes W)
    \]
    of the group
    \[
        G^{3,\det=1}=\left\{(g_1,g_2,g_3)\in G^3\suchthat\det(g_1 g_2 g_3)=1\right\}
    \]
    into the group of symplectic automorphisms of $U\otimes W$. This embedding is defined by:
    \[
        (g_1,g_2,g_3)\cdot(m\otimes v)=g_1 m g_3^{T}\otimes g_2 v.
    \]
\end{construction}

\begin{remark} \label{remark:three_actions_coincide}
    The construction of the space $U\otimes W$ above obfuscates the fact that the three terms in $G^{3,\det=1}$ act on $U\otimes W$ symmetrically. Indeed, as a vector space, $U\cong W\otimes W^\vee$, and therefore one can exchange the factors of $W$ in the product $U\otimes W\cong W\otimes W^\vee\otimes W$. This is easily seen to leave the symplectic form invariant.
\end{remark}

\begin{construction}
    The map $G^{3,\det=1}\ra\Symp(U\otimes W)$ of Construction~\ref{const:G3_into_symp} can be extended to a map:
    \[
        S_3\ltimes G^{3,\det=1}\ra\Symp(U\otimes W)
    \]
    where $S_3$ acts by interchanging the order of the factors. In particular, we fix the convention that the transposition $(1,3)\in S_3$ acts by:
    \[
    (1,3)\cdot(m\otimes v)=m^{T}\otimes v.
    \]
\end{construction}

\begin{construction}
    Consider the Lagrangian subspace $\M_2(F)=\M_2(F)\otimes e_2\subseteq U\otimes W$, where $e_1,e_2\in W$ is a standard basis. Then by the Schr\"odinger model of the Weil representation of $\Mp(U\otimes W)$ on $S(\M_2(F))$ corresponding to the character $e\co F\ra\CC^\times$, we get an action of the metaplectic group
    \[
        \Mp(U\otimes W)
    \]
    on $S(\M_2(F))$.
\end{construction}

\begin{proposition} \label{prop:Y_three_actions_coincide}
    There is a unique lift
    \[\xymatrix{
        S_3\ltimes G^{3,\det=1} \ar[r] \ar@{-->}[rd] & \Symp(U\otimes W) \\
        & \Mp(U\otimes W) \ar[u]
    }\]
    satisfying that the transposition $(1,3)\in S_3$ of the left and right actions acts on $S(\M_2(F))$ via the transposition $g\mapsto g^T$ of $\M_2(F)$.
\end{proposition}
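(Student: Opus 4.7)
The plan is to build the lift explicitly, subgroup by subgroup, using the Schr\"odinger model of the Weil representation on $S(\M_2(F))$, and then to check compatibility across generators. Recall that $\Mp(U\otimes W)\to\Symp(U\otimes W)$ is a central $\mu_2$-extension, so once a single lift is constructed, the set of all lifts is a torsor over $\mathrm{Hom}(S_3\ltimes G^{3,\det=1},\mu_2)$.

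For existence, I would lift each of the three $\SL_2$ factors of $G^{3,\det=1}$ separately. The first factor $\{(g,1,1):g\in\SL_2\}$ lies in the Siegel Levi preserving the Lagrangian $\M_2(F)\otimes e_2$, and so lifts canonically to act on $S(\M_2(F))$ by $f\mapsto f(g^{-1}\cdot)$; symmetrically for $\{(1,1,g)\}$. The middle factor $\{(1,g_2,1)\}$ does not preserve the Lagrangian, but it corresponds to the dual-pair embedding $(\SL_2,O(\M_2(F),\det))\hookrightarrow\Sp(U\otimes W)$. Since the quadratic space $(\M_2(F),\det)$ has even dimension and trivial discriminant, the metaplectic cover splits canonically over this $\SL_2$ via the standard Weil-representation formulas (this is the prototypical even-orthogonal dual-pair splitting). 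These three actions are constructed on separate tensor factors of $U\otimes W$ and hence assemble into a lift of $\SL_2^3$. I would then extend to the torus directions $(\lambda_1 I,\lambda_2 I,\lambda_3 I)$ with $\lambda_1\lambda_2\lambda_3=\pm 1$ by the explicit scaling formulas on the Schr\"odinger model, and finally lift $S_3$ starting from the prescribed action of $(1,3)$ by $f(m)\mapsto f(m^T)$ and propagating to $(1,2)$ and $(2,3)$ via the $S_3$-symmetry of Remark~\ref{remark:three_actions_coincide}.

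For uniqueness, any other lift differs from this one by a character $\chi\co S_3\ltimes G^{3,\det=1}\to\mu_2$. Perfectness of $\SL_2$ forces $\chi|_{\SL_2^3}=1$, so $\chi$ factors through the quotient, an extension of $S_3$ by the rank-$2$ torus of determinants. The prescribed action on $(1,3)$ forces $\chi((1,3))=1$, and propagation via the symmetry among the three tensor factors determines $\chi$ on all of $S_3$. On the residual torus, compatibility with the already-fixed action of $\SL_2^3$ (which the torus normalizes non-trivially) together with the central character of the Weil representation (rigidly determined by the symplectic form) then forces $\chi$ to vanish identically.

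The main obstacle is verifying that the three independently constructed $\SL_2$ lifts fit together coherently across the relations in $G^{3,\det=1}$, in particular that the Levi splitting on the first and third $\SL_2$'s is compatible with the dual-pair splitting on the middle $\SL_2$. In principle this is a Rao-type cocycle calculation, but it can be streamlined considerably by invoking the $S_3$-symmetry of Remark~\ref{remark:three_actions_coincide}, which reduces the distinct relations one must verify to a single compatibility between one Levi factor and the middle factor.
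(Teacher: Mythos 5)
Your overall strategy---construct the lift explicitly and argue uniqueness via the fact that lifts to a central $\mu_2$-extension form a torsor over characters---is the same as the paper's, but two of your steps have genuine gaps.

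First, the ``assemble'' step for $\SL_2^3$. You say that because the three $\SL_2$'s act on separate tensor factors of $U\otimes W\cong W\otimes W^\vee\otimes W$, their lifts automatically assemble. This is not a valid deduction: the Maslov cocycle lives on the symplectic group of the \emph{whole} space $U\otimes W$, and the three factors are coupled by its symplectic form, so the cocycle can a priori be nontrivial on products across factors. You later acknowledge this as ``the main obstacle'' and propose a Rao cocycle calculation, invoking the $S_3$-symmetry of Remark~\ref{remark:three_actions_coincide} to reduce the checks. But that symmetry lives in $\Symp(U\otimes W)$, and you cannot use it at the level of $\Mp(U\otimes W)$ until you have established a metaplectic lift of $S_3$---which is part of what you are proving. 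The paper sidesteps this entirely with a cleaner argument: each $\SL_2(F)$ factor is perfect, so its metaplectic lift (if it exists) is \emph{unique}. One explicit lift is given for the pair $\SL_2\times\{1\}\times\SL_2$; the other two pairs are obtained by conjugation inside $\Symp(U\otimes W)$ (inner automorphisms of $\Symp$ canonically lift to automorphisms of $\Mp$ because the extension is central). Uniqueness then forces the lifts to agree on overlaps, so they assemble with no cocycle calculation at all. You should adopt this mechanism; it is what makes the argument go through without a messy computation.

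Second, the uniqueness argument on the residual torus. You claim that ``compatibility with the already-fixed action of $\SL_2^3$ together with the central character of the Weil representation'' forces the correcting character $\chi$ to vanish on the torus. Neither of these ingredients does the job: a character of a group automatically satisfies $\chi(tgt^{-1})=\chi(g)$, so compatibility under conjugation imposes no new constraint, and the central character of the Weil representation is not relevant here. What you actually need is the group-theoretic fact that the $S_3$-coinvariants of $(F^\times)^{3,\Pi=1}$ are trivial (equivalently, that the abelianization of $S_3\ltimes G^{3,\det=1}$ is $\ZZ/2$, generated by the image of a transposition). Once you observe that any character of the semidirect product restricted to the torus must be $S_3$-invariant, and that the $S_3$-coinvariants of $(F^\times)^{3,\Pi=1}$ vanish (this is the coinvariants of the $A_2$ root lattice tensored with $F^\times$ under the Weyl group $S_3$, which collapse because each simple root is identified with $0$ via the reflections), uniqueness is immediate. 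This is the computation the paper implicitly relies on when it asserts the abelianization is $\ZZ/2$, and you should make it explicit rather than reaching for the Weil-representation central character.

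The remaining parts of your plan (explicit Siegel-Levi lift for the first and third $\SL_2$'s, dual-pair splitting for the middle $\SL_2$, explicit torus and $S_3$ lifts in the Schr\"odinger model) match the paper's constructions.
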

For the sake of clarity of the exposition, we postpone the proof of this statement to the end of this subsection. Our proof will be constructive, and we will give an explicit description of this lift.

We conclude that the group $S_3\ltimes G^{3,\det=1}$ acts on $S(\M_2(F))$.

Finally, we can use this Weil representation to construct an additional $G$-action on $Y$.
\begin{construction} \label{const:middle_action}
    We use induction with compact support to extend the action of $G^{3,\det=1}$ on $S(\M_2(F))$ to an action of $G^3$ on $Y$. Specifically, we identify
    \[
        Y=S(\M_2(F)\times F^\times)=\cInd_{S_3\ltimes G^{3,\det=1}}^{S_3\ltimes G^3}S(\M_2(F))
    \]
    via the section
    \[
        G^{3,\det=1}\backslash G^3=F^\times\ra G^3
    \]
    given by $y\mapsto\left(1,\begin{pmatrix}y & \\ & 1\end{pmatrix},1\right)$.
    
    Explicitly, if $\Psi(m,y)=\Phi(m)f(y)\in S(\M_2(F)\times F^\times)$ for $\Phi\in S(\M_2(F))$ and $f\in S(F^\times)$, then we define:
    \begin{multline} \label{eq:induced_action_on_Y}
        ((g_1,g_2,g_3)\cdot\Psi)(m,y)= \\
        =\abs{\det(g_1 g_2 g_3)}\cdot\left(\left(g_1,\begin{pmatrix}y & \\ & 1\end{pmatrix}g_2\begin{pmatrix}y^{-1}\det(g_1 g_2 g_3)^{-1} & \\ & 1\end{pmatrix},g_3\right)\cdot\Phi\right)(m)\times \\
        \times f(y\cdot\det(g_1 g_2 g_3)),
    \end{multline}
    where the action of $\left(g_1,\begin{pmatrix}y & \\ & 1\end{pmatrix}g_2\begin{pmatrix}y^{-1}\det(g_1 g_2 g_3)^{-1} & \\ & 1\end{pmatrix},g_3\right)$ on $\Phi$ is the Weil representation action.
    
    We refer to the resulting action of the middle copy of $G=\GL_2(F)$ on $Y$ as the \emph{middle action}.
\end{construction}

Recall that $g\mapsto g^{-T}$ is the Cartan involution on $\GL_2(F)$, given by the inverse of the transposition map: $g^{-T}=(g^T)^{-1}$.
\begin{example} \label{example:left_and_right_actions}
    We can explicitly write down the action of $(g_1,1,g_3)$ on $Y$:
    \[
        (g_1,1,g_3)\cdot\Psi(g,y)=\Psi\left(g_1^{-1}g g_3^{-T},y\cdot\det(g_1 g_3^{T})\right).
    \]
    This follows from the explicit construction in Equations~\eqref{eq:left_right_metaplectic_actions} and~\eqref{eq:S_2_F_3_metaplectic_action} in the proof of Proposition~\ref{prop:Y_three_actions_coincide} below.
\end{example}

\begin{example}
    As in Remark~\ref{remark:three_actions_coincide}, there is a hidden symmetry between the left, right and middle $G$-actions on $Y$. This follows because $Y$ carries an action of $S_3\ltimes G^3$. In particular, the action of the the transposition $(1,3)\in S_3$ on $Y$ is given by:
    \[
    (1,3)\cdot\Psi(g,y)=\Psi(g^T,y),
    \]
    as can be seen from Equation~\eqref{eq:transpose_metaplectic_action} in the proof of Proposition~\ref{prop:Y_three_actions_coincide} below.
\end{example}

\begin{remark}
    In order to be consistent with the notation of Theorem~\ref{thm:middle_action}, we need to be able to write expressions of the form:
    \[
        \widetilde{V}\otimes_G Y\otimes_G V.
    \]
    Recall that we are considering $\widetilde{V}$ to be a right-module. Therefore, we need to turn one of the $G$-actions on $Y$ into a right action. In order to be consistent with Equation~\eqref{eq:action_on_matrices}, we will occasionally use the transposition $g\mapsto g^T$ to turn the third $G$-action into a right-action whenever it is necessary. See also Definition~\ref{def:directions_for_G_modules} below.
\end{remark}

We finish this subsection by proving Proposition~\ref{prop:Y_three_actions_coincide}.
\begin{proof}[Proof of Proposition~\ref{prop:Y_three_actions_coincide}]
    Let us begin by showing that there is at most one metaplectic lift for the map $S_3\ltimes G^{3,\det=1}\ra\Symp(U\otimes W)$ compatible with the specified action of the metaplectic lift of $(1,3)$. This follows because the abelianization of $S_3\ltimes G^{3,\det=1}$ is $\ZZ/2$, and is generated by the image of the transposition $(1,3)$, due to the fact that any two lifts to $\Mp(U\otimes W)$ differ by a character. Also recall that the Weil representation is faithful for the metaplectic group, so that describing lifts of elements of $S_3\ltimes G^{3,\det=1}$ is the same as describing their actions on $S(\M_2(F))$.
    
    Let us show the existence of this lift. We will do so in several steps. Let us first show that the subgroup $\SL_2(F)^3\subseteq S_3\ltimes G^{3,\det=1}$ admits a unique lift. We will then handle the rest of the group. Indeed, the lift of each factor $\SL_2(F)$ is unique because its abelianization is trivial. Note that the subgroup $\SL_2(F)\times\{1\}\times\SL_2(F)$ consisting of two of the copies of $\SL_2(F)$ admits an explicit metaplectic lift by the following action on $S(\M_2(F))$:
    \begin{equation} \label{eq:left_right_metaplectic_actions}
        (g_1,1,g_3)\cdot \Psi(m)=\Psi(g_1^{-1}m g_3^{-T}).
    \end{equation}
    Because the pairs $\SL_2(F)\times\SL_2(F)\times\{1\}$ and $\{1\}\times\SL_2(F)\times\SL_2(F)$ are conjugate to it, these pairs admit metaplectic lifts as well, and all of those lifts agree on each factor $\SL_2(F)$ separately by uniqueness. This shows that there is a metaplectic lift for $\SL_2(F)\times\SL_2(F)\times\SL_2(F)$. For the sake of being explicit, we note that the action of the middle copy of $\SL_2(F)$ is given by:
    \begin{align}
        \left(1,\begin{pmatrix}1 & b \\ & 1\end{pmatrix},1\right)\cdot \Psi(m) & =e\left(b\det(m)\right)\cdot\Psi(m) \label{eq:middle_unipotent_metaplectic_action} \\
        \left(1,\begin{pmatrix}t & \\ & t^{-1}\end{pmatrix},1\right)\cdot \Psi(m) & =\abs{t}^2\cdot\Psi(t\cdot m) \\
        \left(1,\begin{pmatrix}& -1 \\ 1 &\end{pmatrix},1\right)\cdot \Psi(m) & =\int\limits_{\M_2(F)}\Psi(n)e(-\left<m,n\right>)\d{n}, \label{eq:middle_weyl_metaplectic_action}
    \end{align}
    where the measure $\d{n}$ on $\M_2(F)$ is normalized such that $e(\left<\cdot,\cdot\right>)$ is a self-dual character.
    
    Now, we observe that there is a split short exact sequence:
    \[\xymatrix{
    1 \ar[r] & \SL_2(F)^3 \ar[r] & S_3\ltimes G^{3,\det=1} \ar[r] & S_3\ltimes (F^\times)^{3,\Pi=1} \ar[r] & 1
    }\]
    where $(F^\times)^{3,\Pi=1}=\left\{(a,b,c)\in F^\times\suchthat abc=1\right\}$. We think of $(F^\times)^{3,\Pi=1}$ as embedded in $G^{3,\det=1}$ via the splitting:
    \[
    (a,b,c)\mapsto\left(\begin{pmatrix}a & \\ & 1\end{pmatrix},\begin{pmatrix}b & \\ & 1\end{pmatrix},\begin{pmatrix}c & \\ & 1\end{pmatrix}\right).
    \]
    Thus, it remains to show that there is a metaplectic lift for $S_3\ltimes (F^\times)^{3,\Pi=1}$, as it will act by conjugation on $\SL_2(F)^3$ correctly by uniqueness.
    
    We construct the lift for $S_3\ltimes (F^\times)^{3,\Pi=1}$ in three steps. We will first construct a lift $S_2\ltimes (F^\times)^{3,\Pi=1}$ for the copy of $S_2\subseteq S_3$ generated by the permutation $\tau=(1,3)$. Then, we will construct a lift for a permutation $\sigma\in S_3$ of order $3$. Finally, we will show that all required relations between $\sigma$ and $S_2\ltimes (F^\times)^{3,\Pi=1}$ must hold.
    
    We lift $S_2\ltimes (F^\times)^{3,\Pi=1}$ explicitly by defining:
    \begin{align} \label{eq:S_2_F_3_metaplectic_action}
        \left(\begin{pmatrix}a & \\ & 1\end{pmatrix},\begin{pmatrix}b & \\ & 1\end{pmatrix},\begin{pmatrix}c & \\ & 1\end{pmatrix}\right)\cdot\Psi(m) & =\abs{b}\cdot\Psi\left(\begin{pmatrix}a^{-1} & \\ & 1\end{pmatrix}m\begin{pmatrix}c^{-1} & \\ & 1\end{pmatrix}\right) \\
        \tau\cdot\Psi(m) & =\Psi\left(m^{T}\right) \label{eq:transpose_metaplectic_action}
    \end{align}
    for $\Psi\in S(\M_2(F))$, and $\tau=(1,3)\in S_3$ being the transposition of the first and third actions.
    
    Let $\sigma\in S_3$ be the permutation $\sigma(1)=2,\sigma(2)=3,\sigma(3)=1$ of order $3$. Observe that there is a unique lift for sigma in $\Mp(U\otimes W)$ which respects the relation:
    \[
    \tau\sigma\tau^{-1}=\sigma^2.
    \]
    Note that because $\tau$ has order $2$, such a lift automatically respects all relations of $S_3$. For the sake of being explicit, we note that this lift is given by:
    \begin{equation} \label{eq:ord_3_permutation_metaplectic_action}
        \sigma\cdot\Psi\left(\begin{pmatrix}m_{00} & m_{01} \\ m_{10} & m_{11}\end{pmatrix}\right)=\int\limits_{F\times F}e(\beta\cdot m_{00}-\alpha\cdot m_{10})\Psi\left(\begin{pmatrix}\alpha & \beta \\ m_{01} & m_{11}\end{pmatrix}\right)\d{\alpha}\d{\beta}.
    \end{equation}
    
    We have now chosen lifts for both $S_2\ltimes (F^\times)^{3,\Pi=1}$ and $\sigma$. Thus, to show that $S_3\ltimes (F^\times)^{3,\Pi=1}$ admits a metaplectic lift and finish the proof, it suffices to check that the action of $S_3$ on the metaplectic lift of $(F^\times)^{3,\Pi=1}$ by conjugation is the correct one. This can be verified explicitly using Equations~\eqref{eq:S_2_F_3_metaplectic_action} and~\eqref{eq:ord_3_permutation_metaplectic_action}
\end{proof}

\subsection{Proof of Theorem~\ref{thm:middle_action}}

Having constructed the action we seek, we can now start proving Theorem~\ref{thm:middle_action}. Our first step is to restrict the set of representations that can appear in $\tilde{V}\otimes_G Y\otimes_G V$, which is done via the following lemma.

\begin{lemma} \label{lemma:center_is_compatible}
    Let $Y$ be considered as a $G^3$-module via the left, middle and right actions respectively. Then the resulting three actions of the Bernstein centers of each copy of $G$ on $Y$ identify.
\end{lemma}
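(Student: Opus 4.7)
The plan is to split the statement into two steps: first, establish that the left and right Bernstein center actions on $Y$ agree, and then use the $S_3$-symmetry of Proposition~\ref{prop:Y_three_actions_coincide} to conclude that the middle action agrees with them as well.

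For the first step, I would adapt the argument already sketched in the introduction for $S(\M_2(F))$. The coincidence of the two Bernstein center actions on $S(G)$ is a standard fact, and it transfers to $\widetilde{S(G)}$ and hence to any $G$-bi-invariant subspace, in particular $S(\M_2(F))$. To upgrade this from $S(\M_2(F))$ to $Y=S(\M_2(F)\times F^\times)$, observe that the bi-action defined in Equation~\eqref{eq:action_on_matrices} couples the $F^\times$ factor to the $\GL_2$ factors only through the determinant character, which is central. Concretely, I would either embed $Y$ into $\widetilde{S(G\times F^\times)}$ and repeat the argument for $S(G\times F^\times)$, or observe that the $F^\times$-action on the second factor of $\M_2(F)\times F^\times$ commutes with both Bernstein center actions, which reduces the question to a fixed $F^\times$-isotypic component, on which $Y$ is a twist of $S(\M_2(F))$ and the coincidence is already known.

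For the second step, by Proposition~\ref{prop:Y_three_actions_coincide} the group $S_3\ltimes G^3$ acts on $Y$, with the $S_3$ factor permuting the three $G$-actions. Writing $\rho_L,\rho_M,\rho_R$ for these actions and $T_\pi$ for the automorphism of $Y$ attached to $\pi\in S_3$, the defining relation $T_\pi\rho_i(g)T_\pi^{-1}=\rho_{\pi(i)}(g)$ exhibits $T_\pi$ as an isomorphism of $G$-modules from $(Y,\rho_i)$ to $(Y,\rho_{\pi(i)})$. By naturality of the Bernstein center, for any element $z$ of the Bernstein center we obtain $T_\pi\circ z_i=z_{\pi(i)}\circ T_\pi$, where $z_i$ denotes the action of $z$ on $Y$ viewed via $\rho_i$. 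Taking $\pi$ to be the transposition of the left and middle slots yields $z_L=T_\pi^{-1}z_M T_\pi$ together with $z_R=T_\pi^{-1}z_R T_\pi$; combining with the equality $z_L=z_R$ from the first step then gives $z_M=z_R=z_L$.

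The delicate step is the extension of the coincidence from $S(\M_2(F))$ to $Y$, since one must be careful about how the bi-action is twisted by $F^\times$. Once that is settled, the remainder is a purely formal application of naturality together with the $S_3$-symmetry, and in fact this is one of the first places where Proposition~\ref{prop:Y_three_actions_coincide} pays off: the existence of the hidden middle action forces its Bernstein center to match the other two for free.
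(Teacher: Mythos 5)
Your proposal is correct and takes essentially the same route as the paper's proof: first establish that the left and right Bernstein center actions coincide by non-degenerately pairing $S(\M_2(F))$ (and hence $Y$) against $S(G)$, then invoke the $S_3\ltimes G^3$-symmetry to transport the equality to the middle action. You spell out the handling of the $F^\times$-twist in a bit more detail than the paper, but the underlying argument is identical.
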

\begin{proof}
    Observe that the statement is true for the left and right translation actions of $G$ on $S(\M_2(F))$ because the $G\times G$-equivariant pairing:
    \begin{align*}
        S(\M_2(F))\otimes S(G) & \ra\CC \\
        \Psi\otimes f & \mapsto \int_G \Psi(g)f(g)\dtimes{g}
    \end{align*}
    is non-degenerate and because the Bernstein center of $G$ acts on $S(G)$ the same way from both sides. As a result, the left and right actions of the center of $G$ on $Y$ coincide. By symmetry (i.e., by conjugation by a permutation from $S_3$), this also applies to the middle action.
\end{proof}

Recall the functor $\Phi^-\co\Mod(P)\ra\Vect$ and its left adjoint $\Phi^+\co\Vect\ra\Mod(P)$ from recollection~\ref{recollection:kirillov}.
\begin{lemma} \label{lemma:mirabolic_rest}
    The map
    \[
        \Phi^+\Phi^-Y\ra Y
    \]
    is an embedding with image $Y^\circ\subseteq Y$, where the functor $\Phi^+\Phi^-$ is taken with respect to the middle action.
\end{lemma}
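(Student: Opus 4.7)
The plan is to reduce everything to an explicit formula for the middle unipotent action, combined with exactness of $\Phi^-$ and $\Phi^+$. Combining Equations~\eqref{eq:induced_action_on_Y} and~\eqref{eq:middle_unipotent_metaplectic_action}, one computes that the middle unipotent $\bigl(1,\begin{pmatrix}1 & b \\ & 1\end{pmatrix},1\bigr)$ acts on $\Psi\in Y=S(\M_2(F)\times F^\times)$ by multiplication by the character $e(by\det(m))$. Essentially everything else follows formally from this.

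First, I would exploit the short exact sequence
\[
    0\to Y^\circ\to Y\to Y/Y^\circ\to 0
\]
arising from the open inclusion $\GL_2(F)\hookrightarrow\M_2(F)$, where $Y/Y^\circ\cong S\bigl((\M_2(F)\setminus\GL_2(F))\times F^\times\bigr)$. On this quotient $\det(m)=0$, so the middle unipotent acts trivially; since $\theta$ is a non-trivial character of $U$, its twisted coinvariants vanish and $\Phi^-(Y/Y^\circ)=0$. Because $\Phi^-$ (the twisted Jacquet functor on smooth $U$-representations) is exact, I would conclude that the inclusion induces an isomorphism $\Phi^-Y^\circ\xrightarrow{\sim}\Phi^-Y$.

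Second, I would verify that the counit $\Phi^+\Phi^-Y^\circ\to Y^\circ$ is an isomorphism. Reparametrizing $\GL_2(F)\times F^\times$ via $(g,y)\leftrightarrow(g,n)$ with $n=y\det(g)$ and unwinding Equation~\eqref{eq:induced_action_on_Y}, the middle $P$-action on $Y^\circ$ touches only the $n$-variable and takes the form
\[
    \begin{pmatrix}a & b \\ & 1\end{pmatrix}\cdot\Psi(g,n)=\abs{a}\,e(bn)\,\Psi(g,an).
\]
This exhibits $Y^\circ\cong S(\GL_2(F))\otimes\Phi^+\CC$ as middle $P$-modules, with $P$ acting trivially on the first factor. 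Since $\Phi^-\Phi^+=\mathrm{id}_\Vect$, this yields $\Phi^-Y^\circ\cong S(\GL_2(F))$ and makes the counit for $Y^\circ$ visibly an isomorphism.

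Finally, using naturality of the counit along the inclusion $Y^\circ\hookrightarrow Y$ together with exactness of $\Phi^+$, the counit $\Phi^+\Phi^-Y\to Y$ factors as
\[
    \Phi^+\Phi^-Y\xleftarrow{\sim}\Phi^+\Phi^-Y^\circ\xrightarrow{\sim}Y^\circ\hookrightarrow Y,
\]
which is injective with image precisely $Y^\circ$. The only non-formal step in this plan is the coordinate-change calculation identifying $Y^\circ$ with $S(\GL_2(F))\otimes\Phi^+\CC$ as middle $P$-modules; the rest is a routine application of the exactness of $\Phi^\pm$ and naturality of the counit of the $(\Phi^+,\Phi^-)$-adjunction.
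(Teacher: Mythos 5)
Your proof is correct and takes essentially the same approach as the paper: both hinge on the explicit formula for the middle mirabolic action (the paper's Equation~\eqref{eq:mirabolic_middle_action}). The paper then simply invokes the Bernstein--Zelevinsky exact sequence $0\to\Phi^+\Phi^-\to\mathrm{id}\to\Psi^+\Psi^-\to 0$ (Proposition~3.2(e) of \cite{derivatives_of_p_adic_repsI}), whereas you re-derive its relevant consequence in this specific case by splitting $Y$ into $Y^\circ$ and $Y/Y^\circ$ and checking the two pieces by hand, using the coordinate change $n=y\det(g)$ to recognize $Y^\circ$ as lying in the essential image of $\Phi^+$.
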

\begin{proof}
    By Equation~\eqref{eq:middle_unipotent_metaplectic_action} and the definition in Equation~\eqref{eq:induced_action_on_Y}, the middle action of the matrix $\begin{pmatrix} a & b \\ & 1\end{pmatrix}$ on the element $\Psi\in Y$ is:
    \begin{equation} \label{eq:mirabolic_middle_action}
        \begin{pmatrix} a & b \\ & 1\end{pmatrix}\cdot\Psi(g,y)=\abs{a}\cdot e\left(b\det(g)y\right)\cdot\Psi(g,ay).
    \end{equation}
    
    By the exact sequence of part~(e) of Proposition~3.2 of \cite{derivatives_of_p_adic_repsI}, it follows that the image in question consists of the functions supported away from the set $\{\det(g)y=0\}$.
\end{proof}

\begin{proof}[Proof of Theorem~\ref{thm:middle_action}]
    We consider the middle action of $G$ on $Y$, given by Construction~\ref{const:middle_action}. That it commutes with the left and right $G$-actions is by construction. The existence of the isomorphism $\nu$ of Theorem~\ref{thm:middle_action} is proven as follows.
    
    First, we suppose that $(\pi,V)$ is either an irreducible principal series representation or a supercuspidal representation. By Lemma~\ref{lemma:center_is_compatible}, the space
    \[
        \tilde{V}\otimes_G Y\otimes_G V
    \]
    is $V$-isotypic, because such irreducible representations are determined by their central characters.
    Our goal is to count the number of copies of $V$ appearing in it, and show that it is $1$. By the assumptions on $V$, we have that $\Phi^-V$ is one-dimensional. Therefore, it is enough to test the dimension of
    \[
        \Phi^-\left(\tilde{V}\otimes_G Y\otimes_G V\right).
    \]
    However, this is the same as:
    \[
        \Phi^-\Phi^+\Phi^-\left(\tilde{V}\otimes_G Y\otimes_G V\right),
    \]
    which by Lemma~\ref{lemma:mirabolic_rest} is the same as evaluating
    \[
        \Phi^-\left(\tilde{V}\otimes_G Y^\circ\otimes_G V\right)\xrightarrow{\sim}\Phi^- S(F^\times)=\CC.
    \]
    Note that the first isomorphism is given by $\Phi^-(\mu)$ for $\mu$ as in Equation~\eqref{eq:spectrum_of_GL_2}, and it follows from Equation~\eqref{eq:mirabolic_middle_action} that $\mu$ is an isomorphism of $P$-modules. This shows that the number of copies of $V$ appearing in the space above is indeed $1$, and shows that the diagram in Item~\ref{item:compatibility_mu_nu} is commutative.
    
    The above argument does not quite work in the case that $(\pi,V)$ is a Steinberg representation, because the Bernstein center is not strong enough to distinguish $V$ from the trivial representation. However, in this case, it is sufficient to verify that the image of the Jacquet functor on $\tilde{V}\otimes_G Y\otimes_G V$ is one-dimensional (because we already know that $\Phi^-\left(\tilde{V}\otimes_G Y\otimes_G V\right)$ is one-dimensional by the argument above). By Equation~\eqref{eq:mirabolic_middle_action}, it follows that the Jacquet functor on $\tilde{V}\otimes_G Y\otimes_G V$ is given by:
    \[
        \tilde{V}\otimes_G S(\M_2^{\det=0}(F)\times F^\times)\otimes_G V.
    \]
    Hence, we need to prove that this space is one-dimensional. This follows immediately from Lemma~\ref{lemma:mirabolic_jacquet} below (note that $V$ is a Steinberg representation, and therefore generic, meaning that it admits no $\SL_2(F)$-invariant linear functionals, as the lemma requires).
\end{proof}

Identify the torus $\left\{\begin{pmatrix}a & \\ & d\end{pmatrix}\right\}\subseteq G$ with the group $F^\times\times F^\times$. Let
\[
    \Jac\co\Mod(G)\ra\Mod(F^\times\times F^\times)
\]
be the Jacquet functor (i.e., parabolic restriction) with respect to the parabolic subgroup
\[
    P'=\left\{\begin{pmatrix}a & b \\ & d\end{pmatrix}\right\}\subseteq G.
\]
\begin{lemma} \label{lemma:mirabolic_jacquet}
    Let $(\pi,V)$ be a representation of $G$, such that $V$ and $\tilde{V}$ admit no $\SL_2(F)$-invariant linear functionals. Then
    \[
        \tilde{V}\otimes_G S(\M_2^{\det=0}(F)\times F^\times)\otimes_G V=\Jac(\tilde{V})\otimes_{F^\times\times F^\times}\Jac(V).
    \]
\end{lemma}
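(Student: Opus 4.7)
My plan is to stratify $\M_2^{\det=0}(F)$ into the closed origin $\{0\}$ and the open rank-one stratum $U$, giving a short exact sequence of $G\times G$-modules
\[
0 \to S(U\times F^\times) \to S(\M_2^{\det=0}(F)\times F^\times) \to S(F^\times) \to 0.
\]
On the quotient $S(\{0\}\times F^\times)\cong S(F^\times)$ the $G\times G$-action factors through $(h,h')\mapsto\det(hh')^{-1}\in F^\times$, so $\SL_2(F)\times\SL_2(F)$ acts trivially. Applying the right-exact functor $\tilde V\otimes_G(-)\otimes_G V$ kills this quotient by hypothesis, since the resulting coinvariants factor through $\tilde V_{\SL_2}\otimes V_{\SL_2}=0$. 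This reduces the lemma to computing $\tilde V\otimes_G S(U\times F^\times)\otimes_G V$ and upgrading the resulting surjection onto the middle term of the sequence to an isomorphism.

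For the rank-one contribution I would use transitivity: $G\times G$ acts transitively on $U\times F^\times$ via $(g,g')\cdot(m,y)=(gmg',\,y\det(gg')^{-1})$. Choosing the base point $(m_0,1)$ with $m_0=\begin{pmatrix}0&1\\0&0\end{pmatrix}$, the stabilizer $H'\subseteq P'\times P'$ decomposes as $H' = N\cdot T'\cdot N$, where the two copies of the upper unipotent $N$ of $P'$ act one-sidedly on the two factors of $G$, and $T'$ is an anti-diagonal copy of the diagonal torus $T\cong F^\times\times F^\times$. A modular-function calculation gives $S(U\times F^\times)\cong\cInd_{H'}^{G\times G}(\chi)$ for an explicit character $\chi$ of $H'$, so Frobenius reciprocity yields
\[
\tilde V\otimes_G S(U\times F^\times)\otimes_G V = (\tilde V\otimes V\otimes\chi)_{H'}.
\]
Taking $H'$-coinvariants in stages---first on each copy of $N$ to produce $\Jac(\tilde V)\otimes\Jac(V)$, then on the anti-diagonal $T'$---and absorbing the character $\chi$ together with the anti-diagonal structure into the identification $T'\cong F^\times\times F^\times$ produces $\Jac(\tilde V)\otimes_{F^\times\times F^\times}\Jac(V)$.

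To upgrade the surjection into the asserted equality, I would appeal to the exactness of the middle Jacquet functor on $Y=S(\M_2(F)\times F^\times)$: since the three $G$-actions on $Y$ pairwise commute, $\Jac_{\mathrm{middle}}$ commutes with the outer tensor products $\tilde V\otimes_G(-)\otimes_G V$, and the identification $\Jac_{\mathrm{middle}}(Y) = S(\M_2^{\det=0}(F)\times F^\times)$ established via Lemma~\ref{lemma:mirabolic_rest} then forces the quotient map to be an isomorphism, closing the proof.

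The main obstacle I expect is the bookkeeping in the torus step: the anti-diagonal $T'\subseteq T\times T$ differs from a single copy of $T$ by an inversion and, depending on the choice of base point, a Weyl swap, while the non-unimodularity of $H'$ introduces an additional twist by a modular character. These must be lined up carefully so that the resulting balanced coinvariants are literally $\Jac(\tilde V)\otimes_T\Jac(V)$ rather than a Weyl-swapped or character-twisted variant.
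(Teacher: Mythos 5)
Your first two steps coincide with the paper's proof: same stratification of $\M_2^{\det=0}(F)\times F^\times$ by rank, same short exact sequence, and the same reason that the quotient $S(F^\times)$ dies after tensoring (your hypothesis says exactly that $V$ and $\tilde V$ have no $\SL_2(F)$-coinvariants). Your orbit/stabilizer computation of the open-stratum contribution is a more hands-on route to the map $\Jac(\tilde V)\otimes_{F^\times\times F^\times}\Jac(V)\ra\tilde V\otimes_G S(\M_2^{\det=0}(F)\times F^\times)\otimes_G V$; the paper instead identifies $S(U\times F^\times)$ directly with $S(U\backslash G)\otimes_{F^\times\times F^\times}S(U\backslash G)$, which packages the same orbit analysis in one step and largely sidesteps the anti-diagonal/modular-character bookkeeping you flag as the expected difficulty. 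Either route yields the desired surjection.

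The third paragraph, however, does not close the argument, and this is in fact the only genuinely nontrivial part of the lemma. After applying the right-exact functor $\tilde V\otimes_G(-)\otimes_G V$ to the short exact sequence, surjectivity of the comparison map is automatic because the induced cokernel vanishes; what remains is the \emph{injectivity} of the first arrow of the induced sequence, i.e.
\[
    \Tor_1^{G\times G}\big(\tilde V\otimes V,\,S(F^\times)\big)=0.
\]
Your appeal to the exactness of the middle Jacquet functor and the identification of the middle Jacquet module of $Y$ with $S(\M_2^{\det=0}(F)\times F^\times)$ does not touch this: that identification merely rewrites the left-hand side of the lemma in terms of $Y$, it says nothing about whether your surjection has a kernel. (It is also the wrong direction logically: the lemma is invoked \emph{in order to} compute the middle Jacquet module of $\tilde V\otimes_G Y\otimes_G V$ for Steinberg $V$, so using facts about that Jacquet module here is circular in spirit.) The paper closes the gap with a spectral-sequence argument: from $S(F^\times)\otimes_G V=0$ and $S(F^\times)\otimes_G\tilde V=0$, the entries at $(i,j)=(0,0),(1,0),(0,1)$ of
\[
    \Tor_i^G\big(\tilde V,\Tor_j^G(V,S(F^\times))\big)\implies\Tor_{i+j}^{G\times G}\big(\tilde V\otimes V,S(F^\times)\big)
\]
all vanish, which forces $\Tor_1^{G\times G}=0$ and hence the injectivity. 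Some argument of this kind is required; the Jacquet-functor exactness you invoke is not a substitute.
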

\begin{proof}
    We use the decomposition of $S(\M_2^{\det=0}(F))$ via an exact sequence:
    \[\xymatrix{
        0 \ar[r] & S(P\backslash G)\otimes_{F^\times}S(P\backslash G) \ar[r] & S(\M_2^{\det=0}(F)) \ar[r]^-{f\mapsto f(0)} & \one_G \ar[r] & 0,
    }\]
    where $\one_G$ is the trivial representation. This uses an identification of the quotient of
    \[
        P\backslash G\times P\backslash G
    \]
    by the diagonal left action of $F^\times$ with the space of rank-one matrices.
    
    If we tensor this sequence by $S(F^\times)$, we obtain the decomposition:
    \begin{equation*}\xymatrix{
        0 \ar[r] & S(U\backslash G)\otimes_{F^\times\times F^\times}S(U\backslash G) \ar[r] & \\
        \ar[r] & S(\M_2^{\det=0}(F)\times F^\times) \ar[r] & S(F^\times) \ar[r] & 0,
    }\end{equation*}
    where $U\subseteq G$ is the unipotent radical. Tensoring by $\tilde{V}$ and $V$ from the left and right, we get a morphism:
    \begin{equation} \label{eq:map_jacquet_to_mat_det_0} \xymatrix{
        \Jac(\tilde{V})\otimes_{F^\times\times F^\times}\Jac(V) \ar[r] & \tilde{V}\otimes_G S(\M_2^{\det=0}(F)\times F^\times)\otimes_G V,
    }\end{equation}
    and we want to show that it is an isomorphism.
    
    The fact that \eqref{eq:map_jacquet_to_mat_det_0} is an isomorphism will follow because our assumption about $\SL_2(F)$-invariant linear functionals implies that
    \begin{equation} \label{eq:no_SL2_functionals}
        S(F^\times)\otimes_G V=S(F^\times)\otimes_G \tilde{V}=0.
    \end{equation}
    Indeed, surjectivity immediately follows from $\tilde{V}\otimes_G S(F^\times)\otimes_G V=0$ and because relative tensor products are right exact. Injectivity is trickier, but can be proven as follows. Essentially, it remains to prove that
    \[
        \Tor_1^{G\times G}(\tilde{V}\otimes V,S(F^\times))=0
    \]
    Plugging Equation~\eqref{eq:no_SL2_functionals} in to the spectral sequence
    \[
        \Tor_i^G(\tilde{V},\Tor_j^G(V,S(F^\times))) \implies \Tor_{i+j}^{G\times G}(\tilde{V}\otimes V,S(F^\times)),
    \]
    we see that the objects at $(i,j)=(0,0),(1,0),(0,1)$ are all $0$.
\end{proof}

\subsection{Properties of the Hidden Action}

Let us make a few closing remarks about the properties of the middle $G$-action on the space $Y$. These observations are easy to see directly, but we write them down explicitly because they will be useful later.

\begin{remark} \label{remark:zeta_for_Y}
    The consistency of the map
    \[
        \nu\co\tilde{V}\otimes_G Y\otimes_G V\ra \mathcal{K}(V(-1))(1)
    \]
    with the map $\mu$ allows us to write it explicitly. It is easy to see that $\nu$ is given by:
    \[
        \nu\left(\tilde{v}\otimes\Psi\otimes v\right)(y)=\int\limits_{\mathrm{M}_2(F)}\left<\tilde{v},\pi(g)v\right>\cdot\Psi\left(g,\det(g)^{-1}y\right)\dtimes{g}.
    \]
    That is, Theorem~\ref{thm:middle_action} says that the expression above, when considered as a function of $y$, belongs to the twist of the Kirillov space of $V(-1)$.
\end{remark}

\begin{remark}
    By Example~\ref{example:left_and_right_actions}, the subspace $Y^\circ\subseteq Y$ is closed under the left and right $G$-actions (and, in fact, Equation~\eqref{eq:mirabolic_middle_action} shows that it is also closed under the middle action of the mirabolic group $P$). However, one can show that $Y^\circ$ generates the entire space $Y$ under the middle action of $G$.
\end{remark}

\begin{remark}
    Very informally, Theorem~\ref{thm:middle_action} says that the spectral decomposition of $Y$ is given by an integral of the form
    \[
        Y\cong\int V\otimes\mathcal{K}(V(-1))(1)\otimes\tilde{V}
    \]
    over the irreducible representations $(\pi,V)$ of $G$.
\end{remark}

\begin{remark} \label{remark:middle_w_action}
    We can also write down the middle action of the matrix $\begin{pmatrix} & -1 \\ 1 & \end{pmatrix}$ on $Y$. Explicitly, by normalizing the Haar measure on $\M_2(F)$ so that $e(\left<-,-\right>)$ is self-dual, it is straightforward to use Equation~\eqref{eq:middle_weyl_metaplectic_action} to obtain:
    \[
        \begin{pmatrix} & -1 \\ 1 & \end{pmatrix}\cdot\Psi(g,y)=\abs{y}^2\int_{\M_2(F)}\Psi(h,y)\cdot e\bigg(-y\cdot\left<g,h\right>\bigg)\d{h}.
    \]
\end{remark}

\section{Symmetric Monoidal Structure on Modules of \texorpdfstring{$\GL(2)$}{GL(2)}} \label{sect:T_tensor}

\subsection{Introduction}

Let $F$ be a non-Archimedean local field with $\chr(F)\neq 2$, and fix the standard Haar measure on $\GL_2(F)$ such that the volume of a maximal compact subgroup is $1$. In Section~\ref{sect:GJ_vs_JL}, we introduced the object
\[
    Y=S(\M_2(F)\times F^\times).
\]
The space $Y$ has three commuting $G=\GL_2(F)$-actions. The first copy of $G$ acts by multiplication on the left on $\M_2(F)$ and by the determinant on $F^\times$, and the second copy of $G$ acts by multiplication on the right on $\M_2(F)$ and by the determinant on $F^\times$. The third action is harder to describe, and is related to the Weil representation.

The tri-module $Y$ appears to encode information about the relation between the Godement--Jacquet and Jacquet--Langlands $L$-functions. An interesting question to ask is how can we use $Y$ to learn more about the representation theory of $G$. Regardless, one might want to better understand the tri-module $Y$ and its properties. It turns out that a useful way of achieving both is by considering the tri-module $Y$ as encoding a functor
\[
    \oY\co\Mod(G)\times\Mod(G)\ra\Mod(G),
\]
sending pairs of modules to their tensor product with $Y$ over $G$, and letting $G$ act on the result via the third action:
\[
    V\oY V'=V\otimes_G Y \otimes_G V'.
\]

This point of view turns out to be surprisingly insightful: the functor $\oY$ turns out to be a part of a (unital) symmetric monoidal structure on the category $\Mod(G)$. In fact, this structure can be further enriched with the duality on $\Mod(G)$ (which interchanges left and right modules) into a structure called a \emph{commutative Frobenius algebra} in presentable categories in an appropriately categorified sense (see Remark~\ref{remark:oY_is_frob}).

The symmetric monoidal structure $\oY$ enjoys some interesting properties. For example, the main result of Section~\ref{sect:GJ_vs_JL} can be re-stated in terms of $\oY$: namely, Theorem~\ref{thm:middle_action} says that generic irreducible representations $V$ of $G$ satisfy $V\oY V\cong V$ (see Example~\ref{example:irr_reps_are_algs} for a more precise formulation). Moreover, in Section~\ref{sect:T_global}, we will show that the functor $\oY$ can be turned into a symmetric monoidal product on the category of global $\GL_2(\AA)$-modules, and that an appropriate space of global automorphic functions is an algebra under $\oY$.

\begin{remark} \label{remark:oY_is_like_comm}
    Intuitively, one can think of the product $\oY$ as a kind of ``commutative relative tensor product'' over $G$. Let us explain this idea by considering a commutative group $H$ and showing that its relative tensor product $\otimes_H$ satisfies similar properties to those of $\oY$.
    
    For commutative groups $H$, the relative tensor product $\otimes_{H}\co\Mod(H)\times\Mod(H)\ra\Mod(H)$ is defined as follows. For $M,N\in\Mod(H)$, the vector space $M\otimes N$ has a $H\times H$-action. Then the $H$-module $M\otimes_H N$ is given by taking the co-invariants of $M\otimes N$ by the action of the subgroup
    \[
        \{(h,h^{-1})\suchthat h\in H\}\subseteq H\times H.
    \]
    
    The relative tensor product $\otimes_{H}$ in the commutative case seems to satisfy many of the same properties as the bi-functor $\oY$ in the case of $\GL(2)$. For example, for an irreducible $H$-module $V$, there is an isomorphism $V\otimes_{H}V\cong V$. This is analogous to the above re-statement of Theorem~\ref{thm:middle_action}. Moreover, the space of functions on $H$ acquires an algebra structure with respect to this monoidal structure, which is analogous to the fact that the space of global automorphic functions has a multiplication map with respect to $\oY$.
    
    In this informal manner (e.g., having the tensor of generic irreducibles be themselves), the monoidal structure $\oY$ ``mimics'' the behaviour of the case of a commutative group.
\end{remark}

This section is dedicated to constructing all of this extra structure on top of the object $Y$, and is structured as follows. In Subsection~\ref{subsect:def_Y}, we rigorously introduce the bi-functor $\oY$. Unfortunately, at this point we do not yet know that it can be extended to a symmetric monoidal structure, as this requires introducing unitality and associativity data. In Subsection~\ref{subsect:unit_Y}, we construct the unit $\one_\Ydown$ of the desired symmetric monoidal structure $\oY$. However, we cannot yet prove that $\oY$ carries the associativity data required from a symmetric monoidal structure.

Much of this section is dedicated to showing that there is a unique way to extend the unitality and symmetry data of Sections~\ref{subsect:def_Y} and~\ref{subsect:unit_Y} into a symmetric monoidal structure on $\Mod(G)$. This is done in several steps, by showing that there is a unique choice of associativity data on larger and larger subcategories. In Section~\ref{subsect:Y_for_generic_reps}, we prove that $\one_\Ydown$ and $\oY$ can be taken to be a part of a symmetric monoidal structure on the full subcategory of $\Mod(G)$ generated under colimits by $\one_\Ydown$. This full subcategory is equivalent to the quotient of $\Mod(G)$ by the degenerate representations, and consists of what we refer to as the \emph{degeneracy-free} representations (which are closely related to generic representations). This establishes much of the needed data for the associativity of $\oY$.

In Subsection~\ref{subsect:Y_for_sph_reps}, we will extend this result further and establish the associativity data on all \emph{spherical} representations. Finally, in Subsection~\ref{subsect:assoc_Y_from_non_sense}, we prove the existence of a unique extension of $\oY$ into a symmetric monoidal structure on all of $\Mod(G)$. This will make use of the fact that the degeneracy-free representations and the spherical representations, taken together, generate all of $\Mod(G)$. We also discuss some nice consequences of our construction in Subsection~\ref{subsect:assoc_Y_from_non_sense}.

\subsection{Fixing Left and Right Actions} \label{subsect:def_Y}

Let us begin by properly defining the functor $\oY$.

The first thing we need to do is to carefully distinguish between left and right $G$-modules. The reason for this is that although it is possible to convert left $G$-modules and right $G$-modules into each other, there are a-priori two ways of doing so. One can turn a left $G$-module $(\pi_L,M)$ into a right $G$-module $(\pi_R,M)$ via either
\begin{equation} \label{eq:left_right_module_inverse}
    \pi_R(g)=\pi_L(g^{-1})\in\End(M)
\end{equation}
or
\begin{equation} \label{eq:left_right_module_transpose}
    \pi_R(g)=\pi_L(g^{T})\in\End(M).
\end{equation}
The composition of these two functors is the outer automorphism $g\mapsto g^{-T}$ of $G$, given by the inverse of the transposition map.

Either choice is good. The most commonly used choice in the literature appears to be \eqref{eq:left_right_module_inverse}. However, we would like to unambiguously talk about the action of the center of $G$ on a module in a way which is independent of whether we think of it as a left or a right module. This forces us to use \eqref{eq:left_right_module_transpose} for all our needs:
\begin{definition} \label{def:directions_for_G_modules}
    We let $\Mod(G)=\Mod^L(G)$ denote the category of smooth left $G$-modules over $\CC$. We similarly denote by $\Mod^R(G)$ the category of smooth right $G$-modules. We fix an equivalence of categories:
    \[
        I_{RL}\co\Mod^L(G)\ra\Mod^R(G)
    \]
    given by sending a left-module $(\pi_L,M)$ to a right module $(\pi_R,M)$ as in Equation~\eqref{eq:left_right_module_transpose}. We denote the inverse by
    \[
        I_{LR}\co\Mod^R(G)\ra\Mod^L(G).
    \]
\end{definition}

Because this is an unusual choice, we will be especially pedantic in explicitly applying the functors $I_{RL}$ and $I_{LR}$ in this section. Nevertheless, we will allow ourselves to be lax again once we get to Section~\ref{sect:T_global}.

Recall that the $G\times G\times G$-module $Y$ from Construction~\ref{const:middle_action}. Using this, we now construct:
\begin{construction}
    We have a functor
    \[
        \oY\co\Mod(G)\times\Mod(G)\ra\Mod(G)
    \]
    given by
    \[
        V\oY V'=\left(I_{RL}(V)\otimes I_{RL}(V')\right)\otimes_{G\times G}Y,
    \]
    with the $G\times G$ action on $Y$ being given by the left and right translation actions respectively, and where the left $G$-action on $V\oY V'$ is given by the middle action of $G$ on $Y$.
    
    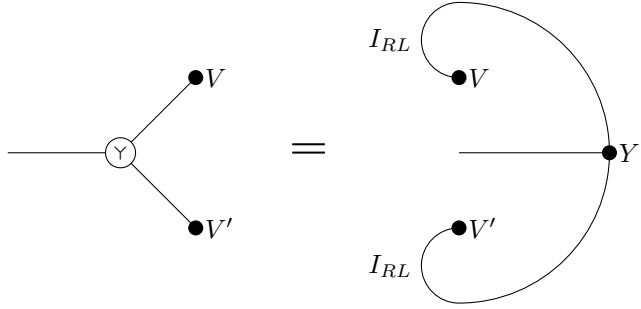
\begin{figure}
        \centering
        \begin{tikzpicture}
            \path[fill] (0,0) circle[radius=0.1];
            \node[right] at (0,0) {$Y$};
            \draw (0,0) arc[radius = 2, start angle=0, end angle=90];
            \draw (-2,2) arc[radius = 0.5, start angle=90, end angle=270];
            \node[left] at (-2.5,1.5) {$I_{RL}$};
            \path[fill] (-2,1) circle[radius=0.1];
            \node[right] at (-2,1) {$V$};
            
            \draw (0,0) -- (-2,0);
            
            \draw (0,0) arc[radius = 2, start angle=0, end angle=-90];
            \draw (-2,-1) arc[radius = 0.5, start angle=90, end angle=270];
            \node[left] at (-2.5,-1.5) {$I_{RL}$};
            \path[fill] (-2,-1) circle[radius=0.1];
            \node[right] at (-2,-1) {$V'$};
            
            \node at (-4,0) {{\huge $=$}};
            
            \path[fill] (-5.5,1) circle[radius=0.1];
            \node[right] at (-5.5,1) {$V$};
            \draw (-5.5,1) -- (-6.5+0.14,0+0.14);
            \path[fill] (-5.5,-1) circle[radius=0.1];
            \node[right] at (-5.5,-1) {$V'$};
            \draw (-5.5,-1) -- (-6.5+0.14,0-0.14);
            \draw (-6.5,0) circle[radius=0.2];
            \node at (-6.5,0) {$\Ydown$};
            \draw (-6.5-0.2,0) -- (-8,0);
        \end{tikzpicture}
        \caption{String diagram for the bi-functor $\oY$.}
        \label{fig:oY_string_diagram}
    \end{figure}
\end{construction}

\begin{remark} \label{remark:Y_is_commutative}
    Observe that the action of the transposition of $\M_2(F)$ on $Y=S(\M_2(F)\times F^\times)$ induces a natural symmetry isomorphism:
    \[
        V\oY V'\xrightarrow{\sim}V'\oY V.
    \]
    That is, the functor $\oY$ has a canonical commutativity constraint.
\end{remark}

\begin{remark} \label{remark:Y_is_comm_frob}
    In fact, when constructing $\oY$, we have seemingly made an arbitrary choice of which two of the three $G$-actions on $Y$ we use to match with a module, and a third through which we act on the result. By Proposition~\ref{prop:Y_three_actions_coincide}, this choice does not matter: the action of $G^3=G\times G\times G$ on $Y$ can be extended to an action of $S_3\ltimes G^3$, in a way compatible with Remark~\ref{remark:Y_is_commutative}.
\end{remark}

\begin{remark}
    The functor $\oY$ is colimit preserving. That is, it is right-exact and respects direct sums.
\end{remark}

\subsection{Unitality of \texorpdfstring{$\oY$}{Y}} \label{subsect:unit_Y}

In this subsection, we will derive some of the basic properties of the functor $\oY$. Specifically, we will introduce the unit $\one_\Ydown$ of the symmetric monoidal structure-to-be. Since we will need a name for this kind of structure (a symmetric product with a unit but with no associativity data), we begin by formally defining and naming it in subsubsection~\ref{subsubsect:weak_symm_mon}. We will construct this structure for the category $\Mod(G)$ in subsubsection~\ref{subsubsect:weak_symm_mon_mod_G}

\subsubsection{Weak Symmetric Monoidal Structures} \label{subsubsect:weak_symm_mon}

The definition we need is the following:
\begin{definition}
    Let $\C$ be a category. A \emph{weak symmetric monoidal structure} on $\C$ is a bi-functor
    \[
        \otimes\co\C\times\C\ra\C,
    \]
    an object
    \[
        \one_\C\in\C
    \]
    and natural isomorphisms
    \[
        s_{A,B}\co A\otimes B\xrightarrow{\sim} B\otimes A,\qquad l_A\co\one_\C\otimes A\xrightarrow{\sim}A,
    \]
    such that the following diagrams commute:
    \[\xymatrix{
        A\otimes B \ar[rd]^{s_{A,B}} \ar@{=}[rr]^{\id_{A\otimes B}} & & A\otimes B \\
        & B\otimes A \ar[ru]^{s_{B,A}} &
    }\]
    and
    \[\xymatrix{
        \one_\C\otimes\one_\C \ar[rr]^{s_{\one_\C,\one_\C}} \ar[rd]^{l_{\one_\C}} & & \one_\C\otimes\one_\C \ar[ld]_{l_{\one_\C}} \\
        & \one_\C. &
    }\]
    A category with a weak symmetric monoidal structure is a \emph{weak symmetric monoidal category}.
\end{definition}
\begin{definition}
    If $\C$ and $\D$ are two weak symmetric monoidal categories, then a \emph{weak symmetric monoidal functor} between them is a functor $F\co\C\ra\D$, along with natural isomorphisms
    \[
        \one_\D\xrightarrow{\sim}F(\one_\C),\qquad F(A)\otimes F(B)\xrightarrow{\sim}F(A\otimes B),
    \]
    such that the following diagrams commute:
    \[\xymatrix{
        F(A)\otimes F(B) \ar[d]^\sim \ar[rr]^{s_{F(A),F(B)}} & & F(B)\otimes F(A) \ar[d]^\sim \\
        F(A\otimes B) \ar[rr]^{F(s_{A,B})} & & F(B\otimes A)
    }\]
    and
    \[\xymatrix{
        \one_\D\otimes F(A) \ar[d]^\sim \ar[r]^-{l_{F(A)}} & F(A) \\
        F(\one_\C)\otimes F(A) \ar[r]^-\sim & F(\one_\C\otimes A). \ar[u]^{F(l_A)}
    }\]
\end{definition}

\begin{remark}
    One can use the language of operads to encode this data more cleanly. Indeed, we define the weak symmetric operad $\O_w$ to have $n$-ary operations indexed by binary trees with labeled leaves and unordered children. We allow the empty tree to be a $0$-ary operation. The composition of these operations is the natural composition of trees. For example, $\O_w$ admits a single commutative binary operation $a\cdot b$, and the three ternary operations in $\O_w$ are given by: $(a\cdot b)\cdot c$, $(b\cdot c)\cdot a$ and $(c\cdot a)\cdot b$.
    
    Note that the category of categories admits a Cartesian monoidal structure. With this monoidal structure, a weak symmetric monoidal category is an $\O_w$-algebra in categories. A weak symmetric monoidal functor is a morphism of $\O_w$-algebras in categories.
\end{remark}

\subsubsection{A Weak Symmetric Monoidal Structure on \texorpdfstring{$\Mod(\GL_2(F))$}{Mod(GL2(F))}} \label{subsubsect:weak_symm_mon_mod_G}

The structure of the rest of this subsection is as follows. We begin by constructing the unit $\one_\Ydown$ for $\oY$. Afterwards, we will show that $\one_\Ydown$ is indeed a unit, and satisfies the required axioms. This will allow us to turn $\oY$ into a weak symmetric monoidal structure on $\Mod(G)$.

Let $e\co F\ra\CC^\times$ be the same additive character used in Section~\ref{sect:GJ_vs_JL}, and recall the corresponding non-trivial additive character
\begin{equation*}
    \theta\co U\ra\CC^\times
\end{equation*}
on the subgroup $U=U_2(F)\subseteq\GL_2(F)$ of upper triangular unipotent matrices.
\begin{construction} \label{const:Y_unit}
    Let $\one_\Ydown$ be the space of $\theta$-co-invariants of $S(G)$ with respect to the right action. That is, we apply the functor $\Phi^-$ of \cite{derivatives_of_p_adic_repsI} to the right action of $G$ on the space $S(G)$.
\end{construction}

\begin{remark}
    See also Recollection~\ref{recollection:kirillov} for more context on this space.
\end{remark}

\begin{remark}
    Note that as it is defined in \cite{derivatives_of_p_adic_repsI}, the functor $\Phi^-$ carries modules of the mirabolic group $P_2\subseteq\GL_2(F)$ to modules of $P_1\subseteq\GL_1(F)$. Since the group $P_1$ is trivial, we may think of $\Phi^-$ as a functor $\Phi^-\co\Mod(G)\ra\Vect$.
\end{remark}

\begin{warning} \label{warn:left_right_translation}
    There are contradictory conventions with regard to what constitutes the ``right'' or ``left'' action on the space $S(G)$. With one convention, one has the right action acting by left-translation on the underlying space $G$. With another convention, one has an action by right-translation, but after a twist such as $g\mapsto g^{-1}$ or $g\mapsto g^T$. Unless explicitly stated otherwise, in this section we let $G$ act on $S(G)$ according to the first convention:
    \[
        (g_L\cdot f\cdot g_R)(h)=f(g_R hg_L).
    \]
\end{warning}

\begin{remark}
    To clarify, the space $\one_\Ydown$ consists of smooth compactly supported functions $f\co G\ra\CC$ modulo the relations of the form
    \[
        f(ug)-\theta(u)f(g)\sim 0
    \]
    for all $u\in U$. The space $\one_\Ydown$ carries a left $G$-action given by
    \[
        (\pi(h)\cdot f)(g)=f(gh).
    \]
\end{remark}

\begin{remark} \label{remark:one_Y_is_comp_ind}
    Let us relate $\one_\Ydown$ to (perhaps) better known spaces.  Fixing a Haar measure on $U$ allows us to define a canonical map:
    \[
        \one_\Ydown\ra\cInd_U^G(\theta),
    \]
    where $\cInd_U^G(\theta)$ is the representation of $G$ obtained from $\theta$ by induction with compact support. One can easily show that this map is bijective, and thus an isomorphism.
\end{remark}

The main result of this subsection is the unitality result:
\begin{construction} \label{const:one_Y_is_unit}
    There is a canonical natural isomorphism of functors from $\Mod(G)$ to $\Mod(G)$:
    \[
        \one_\Ydown\oY M\xrightarrow{\sim}M.
    \]
    
    Indeed, we use Remark~\ref{remark:Y_is_comm_frob} to identify
    \begin{align*}
        \one_\Ydown\oY M & =\left(I_{RL}(\one_\Ydown)\otimes I_{RL}(M)\right)\otimes_{G\times G}Y\cong \\
        & \cong I_{RL}(\one_\Ydown)\otimes_G\bigg(\left(S(G)\otimes I_{RL}(M)\right)\otimes_{G\times G}Y\bigg),
    \end{align*}
    where the tensor products $-\otimes_{G\times G}Y$ are with respect to the left and right translation actions, and the result is considered as a $G$-module via the middle action.
    
    However, we can construct an explicit isomorphism:
    \[
        \phi\co I_{RL}(\one_\Ydown)\otimes_G Y\xrightarrow{\sim} S(G),
    \]
    where the tensor product is taken with respect to the middle action. Indeed, there is an identification of functors $I_{RL}(\one_\Ydown)\otimes_G(-)\cong\Phi^-(-)$, and by the same argument as in Lemma~\ref{lemma:mirabolic_rest}, applying the functor $\Phi^{-}$ to the middle action on $Y$ gives the space $S(G)$, considered as a $G\times G$ left-module by applying $I_{LR}$ to its right action. This gives the required isomorphism, explicitly given by:
    \begin{equation} \label{eq:explicit_unitality}
        \phi\co f(h)\otimes\Psi(g,y)\mapsto\int_G f(h)\cdot\left(wh^{-T}\cdot\Psi\right)\left(g^{-1},-\det(g)\right)\dtimes{h},
    \end{equation}
    where $wh^{-T}\cdot\Psi$ denotes the middle action, and $w=\begin{pmatrix}& -1 \\ 1 &\end{pmatrix}$.
    
    We now get the desired isomorphism as a composition:
    \[
        \one_\Ydown\oY M\xrightarrow{\ref{remark:Y_is_comm_frob}} I_{RL}(\one_\Ydown)\otimes_G\bigg(\left(S(G)\otimes I_{RL}(M)\right)\otimes_{G\times G}Y\bigg)\xrightarrow{\phi} M.
    \]
\end{construction}

\begin{remark}
    Note that we have repeatedly used the property:
    \[
    S(G)\otimes_G N\cong N.
    \]
\end{remark}

\begin{figure}
    \centering
    \begin{tikzpicture}
        \path[fill] (-1.5,0) circle[radius=0.1];
        \node[right] at (-1.5,0) {$M$};
        \draw (-1.5,0) -- (-3,0);
        
        \node at (-4,0) {{\huge $\cong$}};
        
        \path[fill] (-5.5,1) circle[radius=0.1];
        \node[right] at (-5.5,1) {$M$};
        \draw (-5.5,1) -- (-6.5+0.14,0+0.14);
        \path[fill] (-5.5,-1) circle[radius=0.1];
        \node[right] at (-5.5,-1) {$\one_\Ydown$};
        \draw (-5.5,-1) -- (-6.5+0.14,0-0.14);
        \draw (-6.5,0) circle[radius=0.2];
        \node at (-6.5,0) {$\Ydown$};
        \draw (-6.5-0.2,0) -- (-8,0);
    \end{tikzpicture}
    \caption{String diagram illustrating Construction~\ref{const:one_Y_is_unit}.}
    \label{fig:one_Y_string_diagram}
\end{figure}
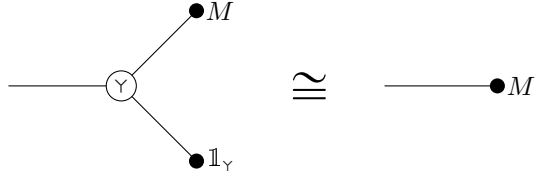

\begin{remark} \label{remark:phi_minus_is_trace}
    While performing Construction~\ref{const:one_Y_is_unit} above, we have in fact also constructed a canonical isomorphism:
    \[
        \Phi^-(M\oY N)\cong I_{RL}(M)\otimes_G N.
    \]
\end{remark}

\begin{figure}
    \centering
    \begin{tikzpicture}
        \path[fill] (-1.5,1) circle[radius=0.1];
        \node[right] at (-1.5,1) {$M$};
        \path[fill] (-1.5,-1) circle[radius=0.1];
        \node[right] at (-1.5,-1) {$N$};
        \draw (-1.5,1) arc[radius = 1, start angle=90, end angle=270];
        \node[left] at (-2.5,0) {$I_{RL}$};
        
        \node at (-4,0) {{\huge $\cong$}};
        
        \path[fill] (-5.5,1) circle[radius=0.1];
        \node[right] at (-5.5,1) {$M$};
        \draw (-5.5,1) -- (-6.5+0.14,0+0.14);
        \path[fill] (-5.5,-1) circle[radius=0.1];
        \node[right] at (-5.5,-1) {$N$};
        \draw (-5.5,-1) -- (-6.5+0.14,0-0.14);
        \draw (-6.5,0) circle[radius=0.2];
        \node at (-6.5,0) {$\Ydown$};
        \draw (-6.5-0.2,0) -- (-8,0);
        \path[fill] (-8,0) circle[radius=0.1];
        \node[left] at (-8,0) {$\Phi^-$};
    \end{tikzpicture}
    \caption{String diagram illustrating Remark~\ref{remark:phi_minus_is_trace}.}
    \label{fig:phi_minus_string_diagram}
\end{figure}
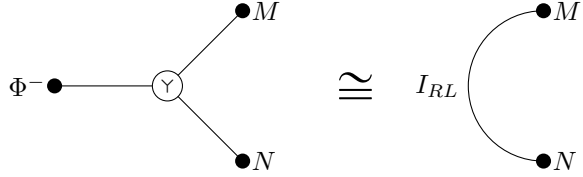

The natural isomorphism of Construction~\ref{const:one_Y_is_unit}, together with the commutativity structure of Remark~\ref{remark:Y_is_commutative}, give a composition
\begin{equation} \label{eq:braiding_on_one_Y} \xymatrix{
    \one_\Ydown \ar@{-->}@/^2pc/[rrr]^\lambda & \one_\Ydown\oY\one_\Ydown \ar[l]_{\ref{const:one_Y_is_unit}} \ar[r]^{\ref{remark:Y_is_commutative}} & \one_\Ydown\oY\one_\Ydown \ar[r]^{\ref{const:one_Y_is_unit}} & \one_\Ydown.
}\end{equation}
The automorphism $\lambda\co\one_\Ydown\ra\one_\Ydown$ is self-inverse by construction. In fact, it turns out that:
\begin{claim} \label{claim:oY_left_right_unit}
    The automorphism $\lambda\co\one_\Ydown\xrightarrow{\sim}\one_\Ydown$ of Diagram~\eqref{eq:braiding_on_one_Y} is the identity map.
\end{claim}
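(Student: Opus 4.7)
The claim is the unit-compatibility axiom $l_{\one_\Ydown} \circ s_{\one_\Ydown, \one_\Ydown} = l_{\one_\Ydown}$, rewritten as $\lambda = \id$. My plan is to verify this identity by unraveling both sides using the explicit formulas of Construction~\ref{const:one_Y_is_unit} and Remark~\ref{remark:Y_is_commutative}, with the conceptual driver being the $S_3$-symmetry of $Y$ established in Remark~\ref{remark:Y_is_comm_frob}.

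First, I would unpack the symmetry. By Remark~\ref{remark:Y_is_commutative}, $s_{\one_\Ydown, \one_\Ydown}$ acts on $(I_{RL}(\one_\Ydown) \otimes I_{RL}(\one_\Ydown)) \otimes_{G \times G} Y$ by swapping the two $\one_\Ydown$-factors and simultaneously applying the transposition $\Psi(g, y) \mapsto \Psi(g^T, y)$ to $Y$. By Proposition~\ref{prop:Y_three_actions_coincide}, this transposition realizes the permutation $(1,3) \in S_3$ acting on $Y$, which interchanges the left and right $G$-actions on $Y$ while preserving the middle action. Therefore the swap of the two $\one_\Ydown$-factors (which reassigns them between the left and right external actions of $Y$) is exactly compensated on the nose by the transposition acting on $Y$.

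Second, I would observe that the unit iso $l_{\one_\Ydown}$ is built entirely from the map $\phi$ of Equation~\eqref{eq:explicit_unitality}, which depends only on the middle action of $Y$. Since $(1,3) \in S_3$ fixes the middle action, both $l_{\one_\Ydown}$ and $l_{\one_\Ydown} \circ s_{\one_\Ydown, \one_\Ydown}$ amount to applying $\phi$ to equivalent data; the conceptual content of the claim is that these two ``routings'' through the same $S_3$-symmetric object produce the same map. The step I expect to require the most care is translating this structural observation into an explicit equality at the level of Equation~\eqref{eq:explicit_unitality}. Concretely, for $f_1, f_2 \in \one_\Ydown$ and $\Psi \in Y$, one checks via a change of variables $g \mapsto g^T$ on $\GL_2(F)$ that $l_{\one_\Ydown}(f_1 \otimes f_2 \otimes \Psi) = l_{\one_\Ydown}(f_2 \otimes f_1 \otimes \Psi^T)$, carefully tracking the interaction of the matrix $w = \begin{pmatrix} & -1 \\ 1 & \end{pmatrix}$ and the $I_{RL}$ functors with the transposition. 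Once these bookkeeping issues are settled, the equality falls out and the claim follows.
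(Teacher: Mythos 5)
You have correctly identified that the claim is equivalent to the triviality of the $S_2$-braiding on $\one_\Ydown\oY\one_\Ydown$, and your structural observations (the braiding is realized by $(1,3)\in S_3$, which fixes the middle action; the unit isomorphism is built from $\phi$, which contracts against the middle action) are accurate. The gap is in the passage from these observations to the claim.

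Your key assertion --- that ``the swap of the two $\one_\Ydown$-factors \ldots is exactly compensated on the nose by the transposition acting on $Y$'' --- establishes that $s_{\one_\Ydown,\one_\Ydown}$ is \emph{well-defined} on $\one_\Ydown\oY\one_\Ydown$ (it descends from the transposition on $Y$), but not that it is \emph{trivial}. These are different statements. The unit isomorphism $l_{\one_\Ydown}$ is asymmetric in the two $\one_\Ydown$-factors: it contracts one of them against the middle action via $\phi$ and retains the other. After applying $s$, the \emph{other} factor is the one contracted. The $S_3$-symmetry guarantees both procedures land in $\one_\Ydown$, but it does not by itself guarantee the two results coincide; that is exactly the content of the claim.

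You then propose to finish by a change of variables $g\mapsto g^T$ in Equation~\eqref{eq:explicit_unitality}, predicting the equality ``falls out'' after ``bookkeeping.'' This is where the actual work lies, and it is not bookkeeping. The paper proves the claim by showing that transposition of $\M_2(F)$ acts trivially on the $(\theta,\theta)$-co-invariants of $Y=S(\M_2(F)\times F^\times)$ under the left and right $U$-actions, and it does so by a Gelfand--Kazhdan orbit argument: filtering $Y$ by $S(G\times F^\times)$ and $S(\M_2^{\det=0}(F)\times F^\times)$, decomposing each into $U\times U$-orbits, and checking that the only orbits supporting a $(\theta,\theta)$-equivariant distribution are transpose-invariant. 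The crucial point is that orbits which are \emph{not} transpose-invariant carry no equivariant distribution and so vanish in the quotient $\one_\Ydown\oY\one_\Ydown$; a naive substitution in \eqref{eq:explicit_unitality} does not see these cancellations, since they come from the $\theta$-relations defining $\one_\Ydown$. To close the gap you should either carry out the orbit decomposition, or explain concretely how your change-of-variables exhibits (rather than assumes) those vanishings.
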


\begin{remark}
    Claim~\ref{claim:oY_left_right_unit} would follow at once if we already knew that $\oY$ and $\one_\Ydown$ were a part of a symmetric monoidal structure. However, we need to prove this directly.
\end{remark}

\begin{proof}[Proof of Claim~\ref{claim:oY_left_right_unit}]
    We are essentially asking about the triviality of the braiding action of $S_2$ on $\one_\Ydown\oY\one_\Ydown$. Therefore, we are asking about the action of the transposition of $\M_2(F)$ on
    \[
        (I_{RL}(\one_\Ydown)\otimes I_{RL}(\one_\Ydown))\otimes_{G\times G}Y.
    \]
    
    That is, we want to show that the $\theta$-co-invariants of $S(\M_2(F)\times F^\times)$ under the left and right actions of the group $U$ are invariant under the transposition of $\M_2(F)$. We will show this using a standard technique by Gelfand--Kazhdan (see for example Theorem~6.10 of \cite{padic_reps_survey}). Since the author lacks a specific reference applicable to the current use-case, and because this technique is usually used for $\GL_2(F)$ rather than $\M_2(F)$, let us write the details of its application here explicitly. Specifically, we will decompose $\M_2(F)\times F^\times$ into $U\times U$-orbits, and prove that the only ones which carry a $\theta$-equivariant distribution are also invariant under the transposition.
    
    Indeed, we can decompose $Y$ as a $G\times G$-module into a filtration with graded pieces
    \[
        S(G\times F^\times),\qquad S(\M_2^{\det=0}(F)\times F^\times).
    \]
    We will show that the transposition acts by identity on the tensor product of both pieces with $\one_\Ydown$ from both sides (note that the functor $(-)\otimes_G\one_\Ydown$ is exact, as the functor $\Phi^-$ is exact).
    
    We begin with the space $S(G\times F^\times)$, which can itself be decomposed as a $U\times U$-module into
    \[
        S\left(U\begin{pmatrix} a & \\ & b \end{pmatrix}U^T\times F^\times\right),\qquad S\left(U\begin{pmatrix} & a \\ b & \end{pmatrix}U^T\times F^\times\right).
    \]
    Transposition acts as the identity on the first of these two components, and the other is supported on $\{a=b\}$ after multiplication by $\one_\Ydown$ on both sides.
    
    This leaves us with the other component, $S(\M_2^{\det=0}(F)\times F^\times)$. It
    is clear, by decomposing $\M_2^{\det=0}(F)$ into $U\times U$-orbits as above, that $S(\M_2^{\det=0}(F)\times F^\times)$ becomes isomorphic to $S(F^\times\times F^\times)$ after multiplication by $\one_\Ydown$ on both sides. The isomorphism is given explicitly by
    \[
        \Psi(g,y)\mapsto \int\int\Psi\left(\begin{pmatrix}1 & u \\ & 1\end{pmatrix}\begin{pmatrix}0 & \\ & z\end{pmatrix}\begin{pmatrix}1 & \\ v & 1\end{pmatrix},\,y\right)e(u+v)\d{u}\d{v},
    \]
    and it is clearly symmetric to the transposition.
\end{proof}

\begin{figure}
    \centering
    \begin{tikzpicture}
        \draw (-1.5+0.14,0+0.14) to [out=45,in=180] (-0.5,1) to [out=0,in=180] (1.5,-1);
        \path[fill] (1.5,-1) circle[radius=0.1];
        \node[right] at (1.5,-1) {$\one_\Ydown$};
        \draw (-1.5+0.14,0-0.14) to [out=-45,in=180] (-0.5,-1) to [out=0,in=180] (1.5,1);
        \path[fill] (1.5,1) circle[radius=0.1];
        \node[right] at (1.5,1) {$\one_\Ydown$};
        \draw (-1.5,0) circle[radius=0.2];
        \node at (-1.5,0) {$\Ydown$};
        \draw (-1.5-0.2,0) -- (-3,0);
        
        \node at (-4,0) {{\huge $\cong$}};
        
        \path[fill] (-5.5,1) circle[radius=0.1];
        \node[right] at (-5.5,1) {$\one_\Ydown$};
        \draw (-5.5,1) -- (-6.5+0.14,0+0.14);
        \path[fill] (-5.5,-1) circle[radius=0.1];
        \node[right] at (-5.5,-1) {$\one_\Ydown$};
        \draw (-5.5,-1) -- (-6.5+0.14,0-0.14);
        \draw (-6.5,0) circle[radius=0.2];
        \node at (-6.5,0) {$\Ydown$};
        \draw (-6.5-0.2,0) -- (-8,0);
    \end{tikzpicture}
    \caption{String diagram illustrating Claim~\ref{claim:oY_left_right_unit}. The claim is that the two ways of identifying the two sides of the diagram (using the symmetry of $\oY$ compared with the symmetry of the usual tensor product $\otimes$) give the same isomorphism.}
    \label{fig:one_Y_commutativity_string_diagram}
\end{figure}
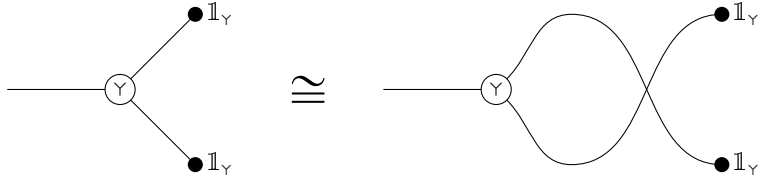

\begin{corollary} \label{cor:one_Y_is_commutative}
    The ring of endomorphisms
    \[
        \End(\one_\Ydown)
    \]
    is commutative.
\end{corollary}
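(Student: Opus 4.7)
The plan is to deduce the corollary formally from Claim~\ref{claim:oY_left_right_unit} by running an Eckmann--Hilton argument on the endomorphism monoid of the unit. To set it up, I would equip $\End(\one_\Ydown)$ with a second binary operation alongside composition, defined by
\[
f \star g \coloneqq l_{\one_\Ydown} \circ (f \oY g) \circ l_{\one_\Ydown}^{-1},
\]
where $l_{\one_\Ydown} \co \one_\Ydown \oY \one_\Ydown \xrightarrow{\sim} \one_\Ydown$ is the unit isomorphism from Construction~\ref{const:one_Y_is_unit}. Since $\oY$ is a bifunctor by construction, the operations $\circ$ and $\star$ automatically satisfy the interchange law
\[
(x \star y) \circ (u \star v) = (x \circ u) \star (y \circ v).
\]

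The next step is to verify that $\star$ is unital with the same unit $\id_{\one_\Ydown}$ as composition. The identity $\id \star g = g$ is immediate from naturality of $l$ in its second argument. The identity $f \star \id = f$ is where Claim~\ref{claim:oY_left_right_unit} enters essentially: using naturality of the braiding $s$ to write $f \oY \id = s_{\one_\Ydown,\one_\Ydown}^{-1} \circ (\id \oY f) \circ s_{\one_\Ydown,\one_\Ydown}$, then applying the claim in the form $l_{\one_\Ydown} \circ s_{\one_\Ydown,\one_\Ydown} = l_{\one_\Ydown}$ (and its formal consequence $l_{\one_\Ydown} \circ s_{\one_\Ydown,\one_\Ydown}^{-1} = l_{\one_\Ydown}$, since the weak symmetric braiding is self-inverse), together with naturality of $l$ in the second slot, reduces $l_{\one_\Ydown} \circ (f \oY \id)$ to $f \circ l_{\one_\Ydown}$, as desired.

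Once both unitality identities are in place, the standard Eckmann--Hilton calculation
\[
f \circ g = (f \star \id) \circ (\id \star g) = (f \circ \id) \star (\id \circ g) = f \star g = (\id \circ f) \star (g \circ \id) = (\id \star g) \circ (f \star \id) = g \circ f
\]
establishes commutativity of $\End(\one_\Ydown)$. I do not anticipate any real obstacle here: the substantive content has already been absorbed into the proof of Claim~\ref{claim:oY_left_right_unit}, and this corollary is its purely formal categorical consequence (it is the usual fact that endomorphisms of the unit in a weak symmetric monoidal category form a commutative monoid, once one knows that the left and right unitors agree on $\one_\Ydown \oY \one_\Ydown$).
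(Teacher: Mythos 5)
Your proof is correct and is exactly the approach the paper takes: the paper's own proof simply states that this follows by ``a minor variant of the classical Eckmann-Hilton argument,'' noting that associativity of $\oY$ is not needed and that the only non-formal input is the triviality of the braiding from Claim~\ref{claim:oY_left_right_unit}. You have merely unwound the details --- defining the second operation $\star$, verifying the two unitality identities (with Claim~\ref{claim:oY_left_right_unit} doing the work for $f\star\id = f$, exactly as the paper indicates), and running the standard interchange computation --- but the decomposition, the key lemma, and the overall structure are the same as in the paper.
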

\begin{proof}
    This is given by a minor variant of the classical Eckmann-Hilton argument. Indeed, we note that there are two compatible multiplications on $\End(\one_\Ydown)$: composition and the $\oY$ product. We do not yet know that the latter is associative, but the Eckmann-Hilton argument works regardless; all we actually need is the triviality of the braiding shown in Claim~\ref{claim:oY_left_right_unit}.
\end{proof}

\begin{corollary}
    The bi-functor $\oY\co\Mod(G)\times\Mod(G)\ra\Mod(G)$, together with the object $\one_\Ydown$, the isomorphism of Construction~\ref{const:one_Y_is_unit} and the commutativity structure of Remark~\ref{remark:Y_is_commutative}, form a weak symmetric monoidal structure on $\Mod(G)$.
\end{corollary}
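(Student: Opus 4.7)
The plan is to unpack the definition of weak symmetric monoidal structure and verify each required piece of data and each axiom in turn, drawing on the pieces already constructed in Subsections~\ref{subsect:def_Y} and~\ref{subsect:unit_Y}. The bi-functor $\oY$ is built in Subsection~\ref{subsect:def_Y}, the unit object $\one_\Ydown$ is Construction~\ref{const:Y_unit}, the left unitor $l_A\co\one_\Ydown\oY A\xrightarrow{\sim}A$ is Construction~\ref{const:one_Y_is_unit}, and the symmetry constraint $s_{A,B}\co A\oY B\xrightarrow{\sim}B\oY A$ is Remark~\ref{remark:Y_is_commutative}. So all the data is already on the table; what remains is to verify the two coherence conditions.

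For the first axiom, that $s_{B,A}\circ s_{A,B}=\id_{A\oY B}$, observe that $s$ is induced by the transposition action $g\mapsto g^T$ on $\M_2(F)$ inside $Y=S(\M_2(F)\times F^\times)$. Since this transposition squares to the identity on $\M_2(F)$, it does so on $Y$, and hence on $A\oY B=(I_{RL}(A)\otimes I_{RL}(B))\otimes_{G\times G}Y$ as well; one may also phrase this as saying that the element $(1,3)\in S_3$ acting on $Y$ via Proposition~\ref{prop:Y_three_actions_coincide} is an involution. This gives the symmetry axiom.

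For the second axiom, that $l_{\one_\Ydown}\circ s_{\one_\Ydown,\one_\Ydown}=l_{\one_\Ydown}$ as maps $\one_\Ydown\oY\one_\Ydown\ra\one_\Ydown$, equivalently that the endomorphism
\[
\lambda=l_{\one_\Ydown}\circ s_{\one_\Ydown,\one_\Ydown}\circ l_{\one_\Ydown}^{-1}
\]
of $\one_\Ydown$ equals the identity: this is precisely the content of Claim~\ref{claim:oY_left_right_unit}, which was established directly in the previous subsubsection via a Gelfand--Kazhdan style orbit analysis.

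Assembling these verifications gives the corollary, and the main obstacle has in fact already been absorbed into Claim~\ref{claim:oY_left_right_unit}; here one only needs to package the statements formally. I would write the proof as a single short paragraph citing the bi-functor, unit, unitor, and symmetry by name, then noting that the involutivity of transposition gives the first axiom and Claim~\ref{claim:oY_left_right_unit} gives the second.
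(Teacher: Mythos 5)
Your proposal is correct and matches the paper's (implicit) argument: the paper gives no explicit proof of this corollary because, as you observe, it amounts to noting that all the data is already in place, the involutivity of the symmetry follows from the transposition on $\M_2(F)$ (equivalently, $(1,3)\in S_3$) squaring to the identity, and the compatibility of the unitor with the symmetry is precisely Claim~\ref{claim:oY_left_right_unit}. This is exactly the intended bookkeeping, and you have correctly identified Claim~\ref{claim:oY_left_right_unit} as carrying the real mathematical weight.
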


\subsection{Restriction to Degeneracy-Free Representations} \label{subsect:Y_for_generic_reps}

In this subsection, we will study the behaviour of the product $\oY$ on \emph{degeneracy-free} representations. These will turn out to be the representations generated under colimits from the unit $\one_\Ydown$. Our main result (Theorem~\ref{thm:oY_gen_is_over_center}) will be that $\oY$ behaves as a tensor product over the center of $\Mod(G)$ when restricted to degeneracy-free representations, and in particular it can be extended uniquely to a symmetric monoidal structure on these representations.

The basic definition we will use is:
\begin{definition}
    We say that a module $M\in\Mod(G)$ is \emph{degenerate} if
    \[
        \Phi^-(M)=0.
    \]
\end{definition}

\begin{definition} \label{def:deg_free_reps}
    We say that a module $N\in\Mod(G)$ is \emph{degeneracy-free} if
    \[
        \Ext^i(N,M)=0
    \]
    for all degenerate $M\in\Mod(G)$ and $i=0,1$. We denote by $\Mod^{\dgfr}(G)\subseteq\Mod(G)$ the full subcategory of degeneracy-free representations.
\end{definition}

\begin{remark} \label{remark:desc_of_dgfr}
    Definition~\ref{def:deg_free_reps} deserves a more conceptual explanation. There are two equivalent descriptions of $\Mod^{\dgfr}(G)$ that will follow from the results later in this subsection:
    \begin{enumerate}
        \item The category $\Mod^{\dgfr}(G)$ is the smallest full subcategory of $\Mod(G)$ which contains $\one_\Ydown$ and is closed under colimits. This will be Claim~\ref{claim:one_Y_generic}.
        \item The second description is as follows. Consider the quotient $\C$ of the category $\Mod(G)$ by the degenerate representations. Then the quotient functor $\Mod(G)\ra\C$ has a left adjoint, and this left adjoint defines an equivalence between $\C$ and $\Mod^{\dgfr}(G)$. This will follow from Claim~\ref{claim:one_Y_is_phi_minus} and Theorem~\ref{thm:oY_gen_is_over_center}.
    \end{enumerate}
\end{remark}

\begin{warning}
    Note that while most generic irreducible representations are degeneracy-free, this is not always the case. Specifically, let $\St$ be the Steinberg representation with trivial central character. While $\St$ is generic, it is not degeneracy-free. Instead, let us denote by $\binom{\St}{\one_G}$ the unique non-trivial extension of the trivial representation $\St$ by $\one_G$. Then $\binom{\St}{\one_G}$ \emph{is} degeneracy-free.
\end{warning}

For each Bernstein component $c$ of $\Mod(G)$, let $Z^c$ be the corresponding Bernstein center. Let
\[
    \Z=\bigoplus_c Z^c
\]
be the direct sum of all $Z^c$-s. This is a non-unital ring, but it is \emph{quasi-unital} (this follows formally because $\Z$ is a direct sum of unital rings). This means that it satisfies that the multiplication map
\[
    \Z\otimes_\Z\Z\ra\Z
\]
is an isomorphism. Here, $\otimes_\Z$ denotes the relative tensor product over $\Z$: that is, for a pair of (non-unital) $\Z$-modules $M$ and $N$, the space $M\otimes_\Z N$ is the quotient of $M\otimes N$ by the expressions of the form
\[
    zm\otimes n-m\otimes zn,\qquad\qquad m\in M,\, n\in N,\, z\in\Z.
\]

Denote the symmetric monoidal category of \emph{smooth} (non-unital) $\Z$-modules by $\Mod(\Z)$, equipped with the relative tensor product over $\Z$. By definition, a $\Z$-module $M$ is smooth if it satisfies that the action map
\[
    \Z\otimes_\Z M\ra M
\]
is an isomorphism. Essentially, this condition means that each $m\in M$ is supported on finitely many components of $\Z$. Intuitively, quasi-unital rings share many properties with unital rings, and their smooth modules take the role of unital modules. See also Section~3 of \cite{dixmier_malliavin_for_born_arxiv} for more details about quasi-unital rings and their smooth modules.

Note that the symmetric monoidal category $\Mod(\Z)$ acts on $\Mod(G)$. This is a fancy way of saying that the center $Z^{c}$ of each component $c$ automatically acts on the objects of the component $\Mod^{c}(G)$. Our main theorem for this subsection is:
\begin{theorem} \label{thm:oY_gen_is_over_center}
    The functor
    \[
        -\otimes_\Z\one_\Ydown\co\Mod(\Z)\ra\Mod(G)
    \]
    is fully faithful, with essential image $\Mod^{\dgfr}(G)$. Moreover, this functor is weak symmetric monoidal.
\end{theorem}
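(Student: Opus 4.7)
The plan is to exhibit $F \coloneqq -\otimes_\Z \one_\Ydown$ as part of an adjunction with (a $\Z$-linear refinement of) $\Phi^-$, from which fully faithfulness and the essential image characterization will follow. The crucial technical input is the identification $\Phi^-(\one_\Ydown) \cong \Z$ as $\Z$-modules, which intuitively says that the bi-$(U,\theta)$-coinvariants of $S(G)$ recover the full Bernstein center.

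First, one observes that $\Phi^-$ refines naturally to a functor $\Mod(G) \to \Mod(\Z)$: by Remark~\ref{remark:phi_minus_is_trace}, $\Phi^-(M) \cong I_{RL}(\one_\Ydown) \otimes_G M$, on which $\Z$ acts through its action on $\one_\Ydown$. Since $\Phi^-$ preserves colimits, one has a natural isomorphism $\Phi^-(V \otimes_\Z \one_\Ydown) \cong V \otimes_\Z \Phi^-(\one_\Ydown)$, which under the key computation $\Phi^-(\one_\Ydown) \cong \Z$ simplifies to $V$. This is the unit of the putative adjunction, and its being an isomorphism establishes the fully faithfulness of $F$.

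The identification $\Phi^-(\one_\Ydown) \cong \Z$ is the main technical obstacle. The $\oY$-unital structure on $\one_\Ydown$ turns $\Phi^-(\one_\Ydown) = I_{RL}(\one_\Ydown) \otimes_G \one_\Ydown$ into an algebra via the multiplication $\Phi^-(\one_\Ydown) \otimes_\Z \Phi^-(\one_\Ydown) \to \Phi^-(\one_\Ydown \oY \one_\Ydown) = \Phi^-(\one_\Ydown)$, and there is a canonical algebra map $\Z \to \Phi^-(\one_\Ydown)$ coming from $\Z$'s action on $\one_\Ydown$. I would analyze $\Phi^-(\one_\Ydown)$ as the bi-$(U,\theta)$-coinvariants of $S(G)$ via the Bruhat decomposition $G = B \sqcup BwB$: the $\theta$-equivariance on both sides forces the support inside $BwB$, and what remains is then matched against the known explicit description of $\Z$ for $\GL_2(F)$, with care taken about contributions from non-generic components whose extensions happen to meet the generic spectrum.

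For the essential image, the inclusion $F(\Mod(\Z)) \subseteq \Mod^{\dgfr}(G)$ follows from the vanishing of $\Hom_G(\one_\Ydown, M)$ and $\Ext^1_G(\one_\Ydown, M)$ for degenerate $M$, where the first uses a duality argument between Whittaker invariants and coinvariants and the second uses a resolution of $\one_\Ydown$ by compactly induced projectives. Conversely, for $N \in \Mod^{\dgfr}(G)$, one constructs a counit $F(\Phi^- N) \to N$ whose kernel and cokernel have vanishing $\Phi^-$ (by applying $\Phi^-$ to the counit and using the unit isomorphism); hence both kernel and cokernel are degenerate, and the degeneracy-free property of $N$ via Ext-vanishing forces them to be zero. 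Finally, the weak symmetric monoidal structure is a formal consequence of unitality and $\Z$-linearity:
\[
    F(V) \oY F(W) = (V \otimes_\Z \one_\Ydown) \oY (W \otimes_\Z \one_\Ydown) \cong V \otimes_\Z W \otimes_\Z (\one_\Ydown \oY \one_\Ydown) \cong F(V \otimes_\Z W),
\]
using the $\Z$-linearity of $\oY$ (Lemma~\ref{lemma:center_is_compatible}) and the unit isomorphism $\one_\Ydown \oY \one_\Ydown \cong \one_\Ydown$ (Construction~\ref{const:one_Y_is_unit}); the unit compatibility $F(\Z) \cong \one_\Ydown$ follows from quasi-unitality of $\Z$, and the commutativity compatibility from Remark~\ref{remark:Y_is_commutative}.
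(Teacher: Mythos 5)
Your overall plan matches the paper's: the theorem is proved via an adjunction between $-\otimes_\Z\one_\Ydown$ and a $\Z$-linear refinement of $\Phi^-$, with the unit being an isomorphism. However, there is a genuine gap at the heart of your argument: you never actually establish the adjunction $(-\otimes_\Z\one_\Ydown)\dashv\Phi^-_{\mathrm{enh}}$, and this is the substantive technical step. What comes for free (tensor-hom adjunction) is that $-\otimes_\Z\one_\Ydown$ is left adjoint to $\sHom_G(\one_\Ydown,-)$. The nontrivial content is identifying $\sHom_G(\one_\Ydown,M)$ with $\Phi^-(M)$. These are a priori different: the former is $\Hom_G(S(G),M)^{U,\theta}$, computed inside the \emph{roughening} of $M$, while $\Phi^-(M)$ is the $\theta$-coinvariants of $M$ itself. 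The paper devotes Claim~\ref{claim:one_Y_is_phi_minus} and Lemma~\ref{lemma:partial_roughening} to this: the point is that after applying $\Phi^-$, the discrepancy between $G$-smooth and $P$-smooth vectors in the roughening vanishes, proved by an explicit averaging argument against $\theta$ over a large compact open subgroup of $U$. Without this, the identification $\Phi^-\circ F\cong\mathrm{id}$ that you compute does not give fully faithfulness of $F$, since $\Phi^-$ has not been shown to be the right adjoint; $G\circ F\cong\mathrm{id}$ alone does not imply $F$ fully faithful. The projectivity of $\one_\Ydown$ (which you need for the $\Ext^1$-vanishing in your essential-image argument) also comes out of this adjunction via exactness of $\Phi^-$, so it is doing double duty.

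Beyond that gap, your route to $\Phi^-(\one_\Ydown)\cong\Z$ via Bruhat decomposition of $S(G)$ is a genuinely different, more explicit approach than the paper's. The paper instead establishes $\Z\xrightarrow{\sim}\sHom(\one_\Ydown,\one_\Ydown)$ (Claim~\ref{claim:end_of_one_Y_is_cent}): it uses the $\oY$-unitality structure to produce a section of the map from the center, then deduces injectivity from the fact that $\one_\Ydown$ surjects onto every generic irreducible; combined with the adjunction, this gives $\Phi^-(\one_\Ydown)\cong\Z$. Your computational route would also need care exactly at the point you flag (Steinberg components, where the center alone doesn't separate $\St$ from $\one_G$), and it would be a larger calculation. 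The weak symmetric monoidal part of your argument and the essential-image argument via the counit are in the same spirit as the paper's Claim~\ref{claim:one_Y_generic}, and would be fine once the adjunction is in place.
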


\begin{corollary} \label{cor:generic_oY_assoc}
    Consider the restriction
    \begin{equation*}
        \oY\co\Mod^{\dgfr}(G) \times\Mod^{\dgfr}(G)\ra\Mod^{\dgfr}(G)
    \end{equation*}
    of the weak symmetric monoidal structure of $\Mod(G)$ to $\Mod^\dgfr(G)$. Then it can be extended from a weak symmetric monoidal structure on $\Mod^{\dgfr}(G)$ into a symmetric monoidal structure on $\Mod^{\dgfr}(G)$ in an essentially unique way (that is, in a way that is unique up to a unique isomorphism).
\end{corollary}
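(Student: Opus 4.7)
The plan is to deduce the corollary directly from Theorem~\ref{thm:oY_gen_is_over_center} by transporting structure along the weak symmetric monoidal equivalence
\[
    F=-\otimes_\Z\one_\Ydown\co\Mod(\Z)\xrightarrow{\sim}\Mod^{\dgfr}(G)
\]
provided by that theorem. The source $\Mod(\Z)$ carries a canonical (strong) symmetric monoidal structure with tensor product $\otimes_\Z$, whose associativity, commutativity, and unitality data are all canonically determined by the universal property of the relative tensor product over the commutative quasi-unital ring $\Z$.

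For existence, I would define the associator $a_{A,B,C}\co(A\oY B)\oY C\xrightarrow{\sim}A\oY(B\oY C)$ on $\Mod^{\dgfr}(G)$ as the composite
\[
    (A\oY B)\oY C\xrightarrow{\sim}F\bigl((A'\otimes_\Z B')\otimes_\Z C'\bigr)\xrightarrow{F(a')}F\bigl(A'\otimes_\Z(B'\otimes_\Z C')\bigr)\xrightarrow{\sim}A\oY(B\oY C),
\]
where $A',B',C'\in\Mod(\Z)$ are preimages of $A,B,C$ under the equivalence $F$, the outer isomorphisms are assembled from the weak monoidal coherence data of $F$, and $a'$ is the canonical associator on $\Mod(\Z)$. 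The pentagon identity for $a$ then reduces to the pentagon for $a'$ using the coherence of the weak monoidal structure on $F$, and likewise the two hexagons reduce to those on $\Mod(\Z)$, using the fact that $F$ intertwines the commutativity constraint of $\oY$ (Remark~\ref{remark:Y_is_commutative}) with the canonical commutativity of $\otimes_\Z$ as part of being weak symmetric monoidal. Naturality of $a_{A,B,C}$ in $A,B,C$ is immediate from the fully-faithfulness of $F$.

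For uniqueness, suppose $a$ and $a''$ are two associators turning $(\Mod^{\dgfr}(G),\oY,\one_\Ydown)$ into symmetric monoidal categories with the given commutativity and unitality. Pulling them back along $F$ yields two associators on $\Mod(\Z)$ compatible with the canonical symmetry and unitality of $\otimes_\Z$. Their ratio is a natural automorphism of the triple relative tensor product $-\otimes_\Z-\otimes_\Z-$ that is trivial on the unit, satisfies a pentagon cocycle, and is compatible with the commutativity; since natural endomorphisms of $\otimes_\Z$ on smooth $\Z$-modules come from multiplication by elements of $\Z$, and the pentagon plus compatibility with the symmetry force this element to be $1$, the two associators coincide. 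The main obstacle I anticipate is bookkeeping in this last step, namely carefully verifying that the full symmetric-monoidal coherence data transported from $\Mod(\Z)$ literally recovers the weak data already fixed on $\Mod^{\dgfr}(G)$; once Theorem~\ref{thm:oY_gen_is_over_center} is granted, the rest is purely formal manipulation of coherence diagrams.
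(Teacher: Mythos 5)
Your proposal is correct and reflects what the paper intends: Corollary~\ref{cor:generic_oY_assoc} is stated immediately after Theorem~\ref{thm:oY_gen_is_over_center} with no separate argument precisely because it follows by transporting the canonical symmetric monoidal structure of $\Mod(\Z)$ along the weak symmetric monoidal equivalence $F=-\otimes_\Z\one_\Ydown$, exactly as you describe. One small remark on the uniqueness step: the condition that does the real work is that the ratio $\psi$ of two associators is trivial when a slot is the unit, combined with the fact that the unit generates $\Mod(\Z)$ under colimits and $\otimes_\Z$ preserves colimits; this already forces $\psi=\id$ by naturality, so invoking the pentagon cocycle and the symmetry compatibility to pin down the scalar is superfluous (and on its own the pentagon would not rule out nontrivial cocycles). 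You do list the trivial-on-unit condition among the properties, so the argument is sound — it is just the attribution of which constraint forces $\psi=1$ that is slightly off.
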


\begin{remark}
    Recall from Remark~\ref{remark:desc_of_dgfr} that we can think of $\Mod^\dgfr(G)$ both as a sub-category of $\Mod(G)$ and as a quotient (in fact, a co-localization) of it. Theorem~\ref{thm:oY_gen_is_over_center} says that as a \emph{sub-category}, the category $\Mod^\dgfr(G)$ is closed under $\oY$. However, the reader should note that the multiplication $\oY$ does \emph{not} descend to $\Mod^\dgfr(G)$ as a \emph{quotient}. Specifically, the trivial representation $\one_G$ is killed by this quotient (as it is degenerate), but one can show that
    \[
        \one_G\oY\one_G=\binom{\one_G}{\St}
    \]
    is an extension of $\one_G$ by a Steinberg representation (so that $\one_G$ is the quotient). However, $\binom{\one_G}{\St}$ is non-degenerate, as $\Phi^-$ is an exact functor (see Proposition~3.2(a) of \cite{derivatives_of_p_adic_repsI}) and $\St$ is non-degenerate. Thus, there are products of degenerate representations that are not degenerate.
\end{remark}

\subsubsection{Proof of Theorem~\ref{thm:oY_gen_is_over_center}}

The rest of this subsection is structured as follows. After introducing some notation, we will state Claims~\ref{claim:one_Y_is_phi_minus}, \ref{claim:end_of_one_Y_is_cent} and \ref{claim:one_Y_generic}. This will let us prove Theorem~\ref{thm:oY_gen_is_over_center}. Finally, we will prove the three claims.

Observe that the functor
\[
    \Phi^-\co\Mod(G)\ra\Vect
\]
factors through the forgetful functor $\Mod(\Z)\ra\Vect$:
\[\xymatrix{
    \Mod(G) \ar[r]^-{\Phi^-} \ar@{-->}[d]_{\Phi_{\mathrm{enh}}^-} & \Vect \\
    \Mod(\Z). \ar[ur]
}\]
In other words, because $\Z$ acts on $\Mod(G)$, the spaces $\Phi^-(V)$ for $V\in\Mod(G)$ are canonically $\Z$-modules. We denote the corresponding $\Z$-module by $\Phi_{\mathrm{enh}}^-(V)$, and say that $\Phi_{\mathrm{enh}}^-$ is an \emph{enhancement} of $\Phi^-$.

\begin{remark} \label{remark:one_Y_is_proj}
    The functor $\Phi^-$ respects all colimits, as it has a right adjoint (by Proposition~3.2 of \cite{derivatives_of_p_adic_repsI}). In particular, $\Phi_{\mathrm{enh}}^-\co\Mod(G)\ra\Mod(\Z)$ also respects all colimits.
\end{remark}

For $M,N\in\Mod(G)$, let
\[
    \sHom(M,N)\subseteq\Hom(M,N)
\]
be the subset consisting of morphisms supported on finitely many Bernstein components. This is the enriched inner $\Hom$ of $\Mod(G)$ over $\Mod(\Z)$, and we therefore think of it as an object of $\Mod(\Z)$. Note that $\sHom(M,N)$ is also given by the formula:
\[
    \sHom(M,N)=\Z\otimes_{Z(\Mod(G))}\Hom(M,N)
\]
where $Z(\Mod(G))$ is the usual Bernstein center of $\Mod(G)$.

We will prove Theorem~\ref{thm:oY_gen_is_over_center} by the combination of the following three claims:

\begin{claim} \label{claim:one_Y_is_phi_minus}
    The functor
    \[
        -\otimes_\Z\one_\Ydown\co\Mod(\Z)\ra\Mod(G)
    \]
    is left adjoint to
    \[
        \Phi_{\mathrm{enh}}^-\co\Mod(G)\ra\Mod(\Z).
    \]
\end{claim}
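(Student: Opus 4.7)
The plan is to produce the natural adjunction isomorphism
$$\Hom_{\Mod(G)}(M\otimes_\Z\one_\Ydown,\,N)\;\cong\;\Hom_{\Mod(\Z)}(M,\,\Phi^-_{\mathrm{enh}}(N))$$
directly from the existing adjunction $\Phi^+\dashv\Phi^-$ between $\Vect$ and $\Mod(G)$ recalled in Recollection~\ref{recollection:kirillov}. The crucial identification that launches the argument is Remark~\ref{remark:one_Y_is_comp_ind}, which gives $\one_\Ydown\cong\cInd_U^G(\theta)=\Phi^+(\CC)$. Since $\Phi^+$ is a left adjoint, it commutes with direct sums, so this extends to $\Phi^+(V)\cong V\otimes\one_\Ydown$ for every vector space $V$, and the adjunction $\Phi^+\dashv\Phi^-$ becomes the underlying bijection $\Hom_{\Mod(G)}(V\otimes\one_\Ydown,N)\cong\Hom_\Vect(V,\Phi^-(N))$.

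The main step is then to upgrade this bijection from vector spaces to smooth $\Z$-modules by cutting out compatible subsets on each side. On the left, a $G$-equivariant morphism out of $V\otimes\one_\Ydown$ factors through $V\otimes_\Z\one_\Ydown$ iff it respects the defining relation $(zv)\otimes w\sim v\otimes(zw)$ for $z\in\Z$; using that $\Z$ acts centrally on $\Mod(G)$, this rewrites as the commutation condition $\tilde f((zv)\otimes w)=z\cdot\tilde f(v\otimes w)$ between the $\Z$-action on $V$ and the central action on $N$. Under the adjunction, naturality of the unit together with the definition of the enhancement $\Phi^-_{\mathrm{enh}}$ translate this into the condition $g(zv)=z\cdot g(v)$ on the corresponding map $g\co V\to\Phi^-(N)$, i.e.\ into $\Z$-linearity. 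Matching the two subsets yields the desired adjunction.

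I expect the main obstacle to be bookkeeping: ensuring that the various $\Z$-actions floating around genuinely coincide. On $\Hom_G(\one_\Ydown,N)$ there are two a priori distinct $\Z$-actions — via $\one_\Ydown$ or via $N$ — and one must check that both agree with the enhanced action on $\Phi^-(N)$ after transport across the adjunction. All three agree because $\Z$ acts as the center of $\Mod(G)$, which is essentially a tautology but requires care. A secondary concern is the non-unitality of $\Z$; this is handled by noting that the whole argument can equivalently be run one Bernstein component $c$ at a time, replacing $\Z$ with the genuinely unital commutative ring $Z^c$ and invoking the standard tensor-hom adjunction for the $(G,Z^c)$-bimodule $\one_\Ydown^c$, then reassembling the pieces using the decomposition $\Z=\bigoplus_c Z^c$.
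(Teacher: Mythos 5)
Your proposal identifies the right structural frame (run the $\Phi^+\dashv\Phi^-$ adjunction and then upgrade from $\Vect$ to $\Mod(\Z)$), and the $\Z$-linearity bookkeeping in your second paragraph is sound. But the underlying $\Vect$-level bijection
\[
\Hom_{\Mod(G)}(V\otimes\one_\Ydown,\,N)\;\cong\;\Hom_\Vect(V,\,\Phi^-(N))
\]
is \emph{not} a formal consequence of the Bernstein--Zelevinsky adjunction, and this is exactly where the real work sits. Note first a misreading: in Recollection~\ref{recollection:kirillov}, $\Phi^+\co\Vect\to\Mod(P)$ and $\Phi^+(\CC)\cong S(F^\times)$ is a $P$-module; the Whittaker space is $\one_\Ydown\cong\cInd_P^G(\Phi^+\CC)$, one further (compact) induction step removed. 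Since $P$ (equivalently $U$) is closed but not open in $G$, the functor $\cInd_P^G$ is not left adjoint to restriction, so the chain $\Hom_G(\cInd_P^G\Phi^+V,N)\cong\Hom_P(\Phi^+V,N|_P)\cong\Hom_\Vect(V,\Phi^-N)$ breaks at the first step. What one actually gets is
\[
\Hom_G\bigl(\one_\Ydown,N\bigr)\;\cong\;\Hom_G\bigl(S(G),N\bigr)^{U,\theta},
\]
the $\theta$-invariants of the \emph{roughening} of $N$ (i.e.\ of the non-smooth $S(G)$-module $\Hom(S(G),N)$), whereas $\Phi^-N$ is the $\theta$-coinvariants of $N$ itself. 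These a priori differ, and the discrepancy is precisely what the paper flags (``is very nearly an isomorphism for reasons of abstract non-sense \dots\ The problem, essentially, is about the smoothness of the module $M$'') and then closes with Lemma~\ref{lemma:partial_roughening}: the natural map $M\to S(P)\otimes_{S(P)}\Hom(S(G),M)$ becomes an isomorphism \emph{after applying $\Phi^-$}. That lemma is a genuine calculation (take a projective $M=\cInd_K^G(\sigma)$, average against $\theta$ over a large compact-open in $U$, and show the result already has compact support), and nothing in your proposal reproduces it. So the heart of the proof is missing. There is also a second, more minor point your proposal elides: the paper must check that the resulting map lands in $\sHom(\one_\Ydown,M)$ rather than all of $\Hom(\one_\Ydown,M)$, i.e.\ that the image is supported on finitely many Bernstein components; your component-by-component reassembly along $\Z=\bigoplus_c Z^c$ implicitly handles this, but it would be worth making explicit.
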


\begin{claim} \label{claim:end_of_one_Y_is_cent}
    The natural map of rings induced by the action of $\Z$ on $\Mod(G)$:
    \[
        \Z\ra\sHom(\one_\Ydown,\one_\Ydown)
    \]
    is an isomorphism.
\end{claim}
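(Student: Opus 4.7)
The plan is to use the adjunction of Claim~\ref{claim:one_Y_is_phi_minus} to reduce the statement to an explicit computation of $\Phi_{\mathrm{enh}}^-(\one_\Ydown)$. Since $\Z$ is quasi-unital, it is the unit of the symmetric monoidal category $\Mod(\Z)$, so combining this with the adjunction yields a chain of natural isomorphisms
\[
    \sHom(\one_\Ydown, \one_\Ydown) \cong \sHom(\Z \otimes_\Z \one_\Ydown, \one_\Ydown) \cong \sHom_\Z(\Z, \Phi_{\mathrm{enh}}^-(\one_\Ydown)) \cong \Phi_{\mathrm{enh}}^-(\one_\Ydown).
\]
Here the first identification uses $\Z \otimes_\Z \one_\Ydown \cong \one_\Ydown$, which follows from the smoothness of the $\Z$-action on $\one_\Ydown$ (every vector is supported on finitely many Bernstein components). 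Under this chain, the natural ring map $\Z \to \sHom(\one_\Ydown, \one_\Ydown)$ corresponds to the unit of the adjunction applied to $\Z$, namely a canonical map $\Z \to \Phi_{\mathrm{enh}}^-(\one_\Ydown)$, so it suffices to show this map is an isomorphism of $\Z$-modules.

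For the second step, I would compute $\Phi^-(\one_\Ydown)$ explicitly. Using the identification $\one_\Ydown \cong \cInd_U^G(\theta)$ from Remark~\ref{remark:one_Y_is_comp_ind}, this is the space of functions on $G$, compactly supported modulo $U \times U$, that are $\theta$-equivariant for both the left and right $U$-translation actions. Decomposing via the Bruhat decomposition $G = B \sqcup UwTU$ and applying the Gelfand--Kazhdan argument from the proof of Claim~\ref{claim:oY_left_right_unit} to each cell: on the closed cell $B = TU$, writing $f(tu) = \theta(tut^{-1}) f(t)$ and imposing the right-$\theta$-equivariance forces $\theta(tu't^{-1}) = \theta(u')$ for all $u' \in U$, which restricts the contribution to $t \in Z(G)$; on the big cell $UwTU$, the two-sided $\theta$-equivariance is absorbed by the unipotent factors and the coinvariants are naturally parameterized by a space of functions on the torus $T$.

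The final and main step is to identify this space with $\Z$ and verify that the adjunction unit realizes the identification. This uses the concrete description of the Bernstein center of $\GL_2(F)$ via cuspidal supports: the center of each principal series block is described via a Satake-type isomorphism by $W$-invariant functions on the torus, while for supercuspidal blocks the center is given by Laurent polynomial rings in the unramified twist parameter. The task is to match these block-by-block descriptions with the Whittaker computation above. The main obstacle is precisely this matching, especially on the supercuspidal components where the Bernstein center is less explicit; however, because the whole map is $\Z$-equivariant, it suffices to check the identification on a single non-zero element in each Bernstein component, which reduces the problem to a finite computation per block.
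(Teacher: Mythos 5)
Your reformulation via Claim~\ref{claim:one_Y_is_phi_minus} (equivalently, Remark~\ref{remark:one_Y_is_phi_minus_explicit}) is sound: the claim reduces to showing that the unit map $\Z\ra\Phi_{\mathrm{enh}}^-(\one_\Ydown)$ is an isomorphism of $\Z$-modules, i.e.\ to computing the two-sided $\theta$-coinvariants of $S(G)$ together with its $\Z$-module structure. However, the computation you propose has a genuine gap at exactly the point you flag as ``the main obstacle,'' and the reduction you offer to close it does not work. The Bruhat filtration gives a short exact sequence of vector spaces $0\ra S(T)\ra\Phi^-\one_\Ydown\ra S(Z(G))\ra 0$, but the Bruhat cells are not stable under the central action (convolution by elements of the Bernstein center mixes cells), so this is not a filtration of $\Z$-modules and does not directly constrain the $\Z$-module structure. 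Moreover, the supercuspidal blocks of $\Z$ are one-dimensional tori that do not sit in any obvious way inside the Mellin picture of $S(T)\oplus S(Z(G))$. And the final reduction --- ``it suffices to check on a single non-zero element per block'' --- is false as stated: a $\Z$-linear map $Z^c\ra M^c$ that is non-zero on a generator need not be surjective (the image of $1$ could be a non-unit, or $M^c$ could have rank $>1$). To make your route rigorous one would essentially have to reprove, from scratch, the theorem that $\End_G(\cInd_U^G\theta)$ is the Bernstein center, which is a substantial result.

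The paper's proof sidesteps this entire computation. The key observation is that the unitality isomorphism $\one_\Ydown\oY M\cong M$ of Construction~\ref{const:one_Y_is_unit} lets one send an endomorphism $f$ of $\one_\Ydown$ to the natural endomorphism $M\mapsto f\oY M$ of the identity functor, giving a ring map $\End(\one_\Ydown)\ra Z(\Mod(G))$ in the \emph{opposite} direction. Together with Claim~\ref{claim:oY_left_right_unit} (triviality of the braiding on $\one_\Ydown$) and Lemma~\ref{lemma:center_is_compatible}, this is a section of the central action, which handles surjectivity for free. What remains is injectivity, which is elementary: an element of the center killing $\one_\Ydown$ kills every generic irreducible (since $\one_\Ydown$ surjects onto each), and centers can be tested on generic irreducibles. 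You have the hard direction (surjectivity) where the paper has a formal argument, and an easy direction where you would not even need your Bruhat analysis.
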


\begin{claim} \label{claim:one_Y_generic}
    The smallest full subcategory of $\Mod(G)$ containing $\one_\Ydown$ and closed under all colimits is $\Mod^{\dgfr}(G)$.
\end{claim}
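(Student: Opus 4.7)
The plan is to identify the subcategory in question with the essential image of $F := - \otimes_\Z \one_\Ydown \co \Mod(\Z) \to \Mod(G)$, exploiting the fact that, by Claims~\ref{claim:one_Y_is_phi_minus} and~\ref{claim:end_of_one_Y_is_cent} together, $F$ is a fully faithful left adjoint to $\Phi^-_{\mathrm{enh}}$.

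First I would verify that $\one_\Ydown$ is projective in $\Mod(G)$: Frobenius reciprocity (the adjunction $\Phi^+ \dashv \Phi^-$ from Recollection~\ref{recollection:kirillov}) identifies $\Hom_G(\one_\Ydown, -)$ with $\Phi^-$, and $\Phi^-$ is exact because it has both a left and a right adjoint (Remark~\ref{remark:one_Y_is_proj}). Consequently $\Ext^i_G(\one_\Ydown, M) = 0$ for $i \geq 1$, which combined with $\Hom_G(\one_\Ydown, M) = \Phi^-(M) = 0$ for degenerate $M$ places $\one_\Ydown$ in $\Mod^\dgfr(G)$. Next I would show that the essential image of $F$ is contained in $\Mod^\dgfr(G)$: any $A \in \Mod(\Z)$ admits a free presentation $\Z^{(I)} \to \Z^{(J)} \to A \to 0$, which tensored with $\one_\Ydown$ over $\Z$ presents $F(A)$ as a cokernel of a map between direct sums of copies of $\one_\Ydown$; the long exact sequence for $\Hom_G(-, M)$ with $M$ degenerate then yields $\Hom_G(F(A), M) = \Ext^1_G(F(A), M) = 0$ from the analogous vanishings for $\one_\Ydown$ itself.

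The main step is the reverse inclusion: every $N \in \Mod^\dgfr(G)$ lies in the essential image of $F$. I would examine the counit $\varepsilon_N \co F(\Phi^-_{\mathrm{enh}}(N)) \to N$ and analyze its kernel $L$ and cokernel $K$. Using the exactness of $\Phi^-$ together with the fact that the unit $A \to \Phi^-_{\mathrm{enh}} F(A)$ is an isomorphism for every $A \in \Mod(\Z)$ (a consequence of Claim~\ref{claim:end_of_one_Y_is_cent} for $A = \Z$, extended to all $A$ by colimit-preservation of both $F$ and $\Phi^-_{\mathrm{enh}}$), one sees that $\Phi^-_{\mathrm{enh}}(L) = \Phi^-_{\mathrm{enh}}(K) = 0$, so both $L$ and $K$ are degenerate. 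Degeneracy-freeness of $N$ then forces the surjection $N \twoheadrightarrow K$ to vanish, so $K = 0$; and it forces the short exact sequence
\[
    0 \to L \to F(\Phi^-_{\mathrm{enh}}(N)) \to N \to 0
\]
to split. The split summand $L$ is then cut out by an idempotent in $\End_G(F(\Phi^-_{\mathrm{enh}}(N)))$; by fully faithfulness of $F$ this idempotent descends to $\Mod(\Z)$, so $L$ itself lies in the essential image of $F$, and together with $\Phi^-_{\mathrm{enh}}(L) = 0$ and full faithfulness this forces $L = 0$, making $\varepsilon_N$ an isomorphism.

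To conclude, the essential image of $F$ is closed under colimits in $\Mod(G)$ (as $F$ is a colimit-preserving fully faithful functor) and contains $\one_\Ydown \cong F(\Z)$, so it contains the smallest colimit-closed subcategory $\C$ generated by $\one_\Ydown$; conversely, every smooth $\Z$-module is a colimit of copies of $\Z$ (via its free presentation), so every object of the essential image is a colimit of copies of $\one_\Ydown$ and hence lies in $\C$. Thus $\C$ equals the essential image, which equals $\Mod^\dgfr(G)$, as required. I expect the main obstacle to be the splitting argument for $L$, which genuinely requires both the adjunction from Claim~\ref{claim:one_Y_is_phi_minus} and the identification $\sHom(\one_\Ydown, \one_\Ydown) \cong \Z$ from Claim~\ref{claim:end_of_one_Y_is_cent}; the remaining steps are essentially formal once these are in hand.
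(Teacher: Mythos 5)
Your proof is correct in its conclusions but takes a genuinely different route from the paper's. The paper's proof is short and constructive: it shows directly that a degeneracy-free module $M$ fits into a cokernel presentation
\[
\one_\Ydown\otimes\Hom(\one_\Ydown,K)\ \longrightarrow\ \one_\Ydown\otimes\Hom(\one_\Ydown,M)\ \longrightarrow\ M\ \longrightarrow\ 0,
\]
where $K$ is the kernel of the second map. Both arrows exist and the first two terms are direct sums of copies of $\one_\Ydown$, and surjectivity at both stages follows from degeneracy-freeness (the cokernel of each evaluation map is degenerate by projectivity of $\one_\Ydown$, while $\Hom(M,-)$ and $\Hom(K,-)$ vanish on degenerate targets). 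No statement about the counit being an isomorphism is ever made. You instead prove the stronger intermediate statement that the counit $F(\Phi^-_{\mathrm{enh}}(N))\to N$ is an isomorphism for $N$ degeneracy-free; this is conceptually nice and gives a cleaner characterization of $\Mod^{\dgfr}(G)$ as the essential image of $F$, but it requires the extra work with kernel and cokernel that the paper avoids.

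Two smaller remarks. First, your idempotent argument for $L=0$ is more complicated than necessary: once you know that the essential image of $F$ is contained in $\Mod^{\dgfr}(G)$ (your second paragraph) and that $L$ is a direct summand of $F(\Phi^-_{\mathrm{enh}}(N))$, you can simply observe that $L$ is a direct summand of a degeneracy-free object, hence degeneracy-free, while also being degenerate (as $\Phi^-(L)=0$). Any object that is both degenerate and degeneracy-free has $\Hom(L,L)=0$, hence vanishes. This avoids descending the idempotent to $\Mod(\Z)$ entirely. Second, the attribution of the identification $\Hom_G(\one_\Ydown,-)\cong\Phi^-$ to "Frobenius reciprocity" is a slight misstatement. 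As the paper's own proof of Claim~\ref{claim:one_Y_is_phi_minus} emphasizes, this identification is \emph{not} a formal consequence of Frobenius reciprocity because of a smoothness subtlety on the $\Hom$ side; it is exactly the content of Claim~\ref{claim:one_Y_is_phi_minus} (together with Lemma~\ref{lemma:partial_roughening}). Since you are already invoking Claim~\ref{claim:one_Y_is_phi_minus} elsewhere, this does not affect correctness, but the justification should point to that claim rather than to Recollection~\ref{recollection:kirillov}.
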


\begin{remark} \label{remark:one_Y_is_phi_minus_explicit}
    Note that Claim~\ref{claim:one_Y_is_phi_minus} is equivalent to the statement that there is a natural isomorphism of functors:
    \[
        \Phi_{\mathrm{enh}}^-M\xrightarrow{\sim}\sHom(\one_\Ydown,M).
    \]
\end{remark}

Using Claims~\ref{claim:one_Y_is_phi_minus}, \ref{claim:end_of_one_Y_is_cent} and \ref{claim:one_Y_generic}, we can now prove our main theorem.
\begin{proof}[Proof of Theorem~\ref{thm:oY_gen_is_over_center}]
    Claim~\ref{claim:one_Y_is_phi_minus} implies that the functor:
    \[
        -\otimes_\Z\one_\Ydown\co\Mod(\Z)\ra\Mod(G)
    \]
    is left adjoint to $\Phi_{\mathrm{enh}}^-$. Remark~\ref{remark:one_Y_is_proj} and Claim~\ref{claim:end_of_one_Y_is_cent} show that the unit
    \[
        N\ra\Phi_{\mathrm{enh}}^-(N\otimes_\Z\one_\Ydown)
    \]
    is an isomorphism, because $\Z\cong\Phi_{\mathrm{enh}}^-\one_\Ydown$ by Claim~\ref{claim:end_of_one_Y_is_cent} and Remark~\ref{remark:one_Y_is_phi_minus_explicit}.
    
    The characterization of the essential image follows from Claim~\ref{claim:one_Y_generic}.
    
    The isomorphism of Construction~\ref{const:one_Y_is_unit} now induces the requisite natural isomorphism between $\otimes_\Z$ and $\oY$.
\end{proof}

Let us now prove the three claims.

\begin{proof}[Proof of Claim~\ref{claim:one_Y_is_phi_minus}]
    We want to construct a natural isomorphism:
    \[
        \Phi_{\mathrm{enh}}^-M\xrightarrow{\sim}\sHom(\one_\Ydown,M),
    \]
    for $M\in\Mod(G)$. Using the Haar measure on $U$ (as in Remark~\ref{remark:one_Y_is_comp_ind}), we can construct the map
    \begin{align*}
        \one_\Ydown\otimes I_{RL}(\one_\Ydown) & \ra S(G) \\
        f\otimes f' & \mapsto \int_U\int_G f(ug'g)f'(wg'^{-T})\theta(u^{-1})\dtimes{g'}\d{u},
    \end{align*}
    whose image lies in $S(G)$ by the smoothness of $f$ and $f'$. This map factors through the relative tensor product over the center of $\Mod(G)$, and thus induces a map of modules over the center:
    \[
        \Phi_{\mathrm{enh}}^-M\ra\Hom(\one_\Ydown,M).
    \]
    This map factors through $\sHom(\one_\Ydown,M)$.
    
    Therefore, we have a map
    \begin{equation} \label{eq:map_phi_minus_to_hom_space}
        \Phi_{\mathrm{enh}}^-M\ra\sHom(\one_\Ydown,M),
    \end{equation}
    which we want to prove is an isomorphism. Note that it is sufficient to prove this for $M$ supported on a single component $c$, in which case $\sHom(\one_\Ydown,M)\cong\Hom(\one_\Ydown,M)$.
    
    The map of Equation~\eqref{eq:map_phi_minus_to_hom_space} is very nearly an isomorphism for reasons of abstract non-sense. That is, a na\"ive expectation would be that this immediately follows by Frobenius reciprocity. The problem, essentially, is about the smoothness of the module $M$. That is, the object $\Hom(\one_\Ydown,M)$ is given by the $\theta$-invariants of the \emph{roughening} of $M$, defined to be the non-smooth $S(G)$-module $\Hom(S(G),M)$.
    
    However, by the description of the left adjoint to $\Phi^-$ as a functor $\Mod(P)\ra\Vect$, given in Subsection~3.2 of \cite{derivatives_of_p_adic_repsI}, we get
    \begin{multline*}
        \Phi^-\left(S(P)\otimes_{S(P)}\Hom_G(S(G),M)\right)\cong \\
        \cong \Hom_P\left(S(F^\times),S(P)\otimes_{S(P)}\Hom_G(S(G),M)\right)\cong \\
        \cong \Hom_P\left(S(P),S(P)\otimes_{S(P)}\Hom_G(S(G),M)\right)^{U,\theta}\cong \\
        \cong \Hom_G(S(G),M)^{U,\theta}\xrightarrow{\sim}\Hom_G(\one_\Ydown,M),
    \end{multline*}
    where $S(F^\times)$ is identified with the space of $\theta$-co-invariants of $S(P)$ with respect to the right action of $U$ on $S(P)$.
    
    Therefore, the claim follows from Lemma~\ref{lemma:partial_roughening} below.
\end{proof}

\begin{lemma} \label{lemma:partial_roughening}
    Let $M\in\Mod^c(G)$ belong to a single component $c$ of $\Mod(G)$. Then the natural map
    \begin{equation} \label{eq:G_smooth_vs_P_smooth}
        M\ra S(P)\otimes_{S(P)}\Hom(S(G),M)
    \end{equation}
    becomes an isomorphism after applying $\Phi^-$.
\end{lemma}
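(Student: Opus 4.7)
The plan is to combine the exactness of $\Phi^-$ with the adjunction $\Phi^+ \dashv \Phi^-$ (which gives $\Phi^-(N) \cong \Hom_P(S(F^\times), N)$) and exploit the single-component hypothesis to constrain the $P$-smoothening. Write $\bar{M} := \Hom_G(S(G), M)$ for the roughening of $M$ and $\bar{M}^P := S(P)\otimes_{S(P)}\bar{M}$ for its $P$-smoothening, so that the map in question is $\mu\co M \to \bar{M}^P$. The injection $M \hookrightarrow \bar{M}$ (the unit of the $G$-smoothening adjunction) factors through $\bar{M}^P$, so by exactness of $\Phi^-$ (Proposition~3.2(a) of \cite{derivatives_of_p_adic_repsI}), $\Phi^-(\mu)$ is injective. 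It thus suffices to prove surjectivity.

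Using the adjunction, surjectivity of $\Phi^-(\mu)$ translates to the factorization statement: every $P$-equivariant map $\phi\co S(F^\times) \to \bar{M}^P$ factors through the inclusion $M \hookrightarrow \bar{M}^P$. Since $S(F^\times)$ is generated over $P$ by the characteristic function $\mathbf{1}_{\mathcal{O}^\times}$, whose stabilizer in $P$ modulo $\theta$ is a compact open subgroup $K_P \subseteq P$ (determined by the conductor of $e$), this further reduces to showing that every $(K_P,\theta)$-equivariant vector $\bar{m} \in \bar{M}^P$ already lies in $M$.

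To conclude, I would choose a compact open $K_G \subseteq G$ containing $K_P$ for which the single-component hypothesis produces a $K_G$-biinvariant, compactly supported idempotent measure $e$ on $G$ that acts as the identity on $M$. Convolution by $e$ is defined on the non-smooth module $\bar{M}$ and lands in $\bar{M}^{K_G} = M^{K_G} \subseteq M$. The crucial step is to verify that $e\cdot\bar{m}$ represents the same class as $\bar{m}$ in $\Phi^-(\bar{M}^P)$, absorbing the averaging inherent in $e$ into the $(U,\theta)$-coinvariant relations via the $(K_P,\theta)$-equivariance of $\bar{m}$. The main obstacle is precisely this compatibility between the idempotent $e$ and the $(U,\theta)$-coinvariant quotient --- this is where the single-component hypothesis is essential, providing the uniform conductor bound that couples $P$-smoothness to $G$-smoothness after projection by $e$.
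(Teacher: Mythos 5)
Your opening reductions are sound: injectivity of $\Phi^-(\mu)$ follows from exactness of $\Phi^-$ as you say, and the adjunction $\Phi^+\dashv\Phi^-$ correctly turns surjectivity into a factorization statement for $P$-maps out of $S(F^\times)\cong\Phi^+\CC$. But the concluding step rests on a false premise and leaves open the very gap you flag. The single-component hypothesis does \emph{not} produce a compactly supported, $K_G$-biinvariant idempotent $e\in S(G)$ acting as the identity on $M$: such an $e$ would force $M = e*M\subseteq M^{K_G}$, and since the normal closure of a compact open $K_G\subseteq\GL_2(F)$ contains $\SL_2(F)$, this would force $M$ to factor through the determinant. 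The Bernstein idempotent $e_c$ of the component is not a compactly supported measure. And even if such an $e$ existed, your ``crucial step'' genuinely fails: convolving $\bar m$ by a Haar idempotent $e_{K_G}$ has no reason to preserve the class in $\Phi^-$, since $e_{K_G}\cdot\bar m - \bar m$ need not lie in the span of the relations $u\cdot n - \theta(u)\,n$.

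The averaging that does work is over $U$, not over a compact open subgroup of $G$. Replacing $\bar m$ by its $\theta^{-1}$-average over a sufficiently large compact open subset $U^{(0)}\subseteq U$ manifestly preserves its $\Phi^-$-class up to the nonzero scalar $\mathrm{vol}(U^{(0)})$, because $\Phi^-$ is the quotient by precisely the relations $u\cdot n - \theta(u)\,n$. The single-component hypothesis is then used for a support estimate rather than an idempotent: the paper reduces to $M=\cInd_K^G(\sigma)$ with $\sigma$ a representation of a finite quotient $K/K_0$, describes $S(P)\otimes_{S(P)}\Hom(S(G),M)$ concretely as a space of $\sigma$-equivariant, right-$P$-smooth, $V$-valued functions on $G$ whose right convolutions by characteristic functions of compact opens are compactly supported, and shows via the Iwasawa decomposition that the $\theta^{-1}$-average of such a function is supported on matrices $k\begin{pmatrix}a&\\&d\end{pmatrix}\begin{pmatrix}1&b\\&1\end{pmatrix}$ with bounded $\abs{b}$ and $\abs{d/a}$. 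Combined with $P$-smoothness on the right, this forces the average to be $G$-smooth on the right and compactly supported, hence already in $M$. That compactness estimate is what your argument is missing.
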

\begin{proof}
    We must prove that the cokernel of the map in Equation~\eqref{eq:G_smooth_vs_P_smooth} is $U$-invariant.
    
    To do this, it is sufficient to consider a representation $M$ given by compact induction:
    \[
        M=\cInd_K^G(\sigma)
    \]
    from a representation $(\sigma,V)$ of $K/K_0$, where $K=\GL_2(\O_F)$ and $K_0\subseteq K$ is some fixed normal compact open subset.
    
    In this case, $S(P)\otimes_{S(P)}\Hom(S(G),M)$ is the space of functions $f\co G\ra V$ which are $\sigma$-equivariant on the left, $P$-smooth on the right, and such that the convolution $\one_{K'}*f$ from the right is compactly supported for all compact open $K'\subseteq K$.
    
    Pick a sufficiently large compact open subset $U^{(0)}\subseteq U$. Replace $f$ with its average $\tilde{f}$ given by:
    \[
        \tilde{f}(g)=\int_{U^{(0)}}f(gu)\theta(u^{-1})\d{u}.
    \]
    Then $\tilde{f}(g)$ is supported on the $g$-s of the form:
    \[
        k\begin{pmatrix}a & \\ & d\end{pmatrix}\begin{pmatrix}1 & b \\ & 1\end{pmatrix},\qquad k\in K
    \]
    with bounded $\abs{b}$ and $\abs{d/a}$. However, this is sufficient to guarantee that $\tilde{f}$ is $G$-smooth on the right, and thus compactly supported. This means that $\tilde{f}$ is already in $M$.
\end{proof}

\begin{proof}[Proof of Claim~\ref{claim:end_of_one_Y_is_cent}]
    It is sufficient to prove that the map
    \[
        Z(\Mod(G))\ra\End(\one_\Ydown)
    \]
    from the center of $\Mod(G)$ to the endomorphisms of $\one_\Ydown$ is an isomorphism.
    
    Observe that the natural isomorphism
    \[
        \one_\Ydown\oY M\xrightarrow{\sim}M
    \]
    of Construction~\ref{const:one_Y_is_unit} induces a map from $\End(\one_\Ydown)$ to the endomorphisms of the identity functor of $\Mod(G)$, which gives a section to the above map.
    
    Therefore, it remains to prove that an element of the center of $\Mod(G)$ which maps to $0$ in $\End(\one_\Ydown)$ is $0$. Indeed, $\one_\Ydown$ maps surjectively onto generic irreducible representations, and elements of the center of $\Mod(G)$ can be tested for equality on each irreducible generic representation separately.
\end{proof}

\begin{proof}[Proof of Claim~\ref{claim:one_Y_generic}]
    It is clear that $\Mod^{\dgfr}(G)$ is closed under colimits and contains $\one_\Ydown$. Let $M\in\Mod^{\dgfr}(G)$. We want to show that it is generated under colimits from $\one_\Ydown$. Note that Claim~\ref{claim:one_Y_is_phi_minus} and Remark~\ref{remark:one_Y_is_proj} imply that $\one_\Ydown$ is projective. Thus, the cokernel of:
    \[
        p\co\one_\Ydown\otimes\Hom(\one_\Ydown,M)\ra M
    \]
    is degenerate. Since $\Hom(M,N)=0$ for all degenerate $N$, we conclude that $p$ is surjective.
    
    Let the kernel of $p$ be $K$. Since $K$ also satisfies that $\Hom(K,N)=0$ for all degenerate $N$, we see that $M$ can be written as the colimit:
    \[\xymatrix{
        \one_\Ydown\otimes\Hom(\one_\Ydown,K) \ar[r] & \one_\Ydown\otimes\Hom(\one_\Ydown,M) \ar[r] & M \ar[r] & 0.
    }\]
\end{proof}

\subsection{Restriction to Spherical Representations} \label{subsect:Y_for_sph_reps}

Our ultimate goal in this section is to turn $\oY$ into a symmetric monoidal structure on the whole of $\Mod(G)$ in a canonical way. Corollary~\ref{cor:generic_oY_assoc} already tells us that this is true on a large subcategory of $\Mod(G)$. However, it turns out that this is not enough to guarantee associativity in general. In fact, it is not difficult to construct a different weak symmetric monoidal structure $\oY'$ which has the same unit as $\oY$ and identifies with it on $\Mod^\dgfr(G)$, but which is not associative.

Thus, in order to prove associativity for $\oY$ in general, we must find a larger category on which we know that $\oY$ is associative. Once we have found such a sufficiently large category, the associativity of $\oY$ will become a property that can be checked, instead of additional data (see Subsection~\ref{subsect:assoc_Y_from_non_sense}).

In Subsection~\ref{subsect:Y_for_generic_reps}, we studied the restriction of $\oY$ to the subcategory generated by $\one_\Ydown$. It makes sense that this was not sufficient, as $\one_\Ydown$ does not generate all of $\Mod(G)$. Thus, our goal for this subsection is to study the restriction of $\oY$ to the subcategory generated by another projective object $E$, such that together with $\one_\Ydown$ they generate all of $\Mod(G)$.

Our choice of $E$ will eventually (Remark~\ref{remark:E_is_assoc}) turn out to be a co-algebra
\[
    \varepsilon\co E\ra\one_\Ydown,\qquad \mu\co E\ra E\oY E
\]
such that the co-multiplication map $\mu$ is an isomorphism. This will force $E$ to be co-associative, which will enable us to uniquely define the associativity data of $\oY$ in Subsection~\ref{subsect:assoc_Y_from_non_sense}.

\begin{definition}
    Denote $\O=\O_F$, and let $K$ be the maximal compact subgroup $\GL_2(\O)$. We let $K$ act on the space $S(\O^\times)$ by the determinant action. Let
    \[
        E=\cInd_K^G(S(\O^\times))
    \]
    be the $G$-module given by induction with compact support of $S(\O^\times)$.
    
    We denote the smallest full subcategory of $\Mod(G)$ which contains $E$ and is closed under colimits by $\Mod^\sph(G)$. We refer to objects of $\Mod^\sph(G)$ as \emph{spherical} representations.
\end{definition}

\begin{remark}
    Note that the term ``spherical representations'' is not standard terminology.
\end{remark}

\begin{remark}
    We can identify:
    \[
        E=\cInd_K^G(S(\O^\times))=\cInd_K^G(\cInd_{\SL_2(\O)}^K (\one_{\SL_2(\O)}))=\cInd_{\SL_2(\O)}^G\left(\one_{\SL_2(\O)}\right).
    \]
\end{remark}

\begin{remark} \label{remark:maps_from_E}
    We have the natural adjunction:
    \[
        \Hom(E,M)\cong \Hom(S(G),M)^{\SL_2(\O)}.
    \]
    As a result, in order to define a map from $E$ to a $G$-module $M$, one needs to give a $\SL_2(\O)$-invariant element of the \emph{roughening} of $M$, defined to be the (non-smooth) $S(G)$-module $\Hom(S(G),M)$. When $M$ is given by functions on some space, we will sometimes abuse notation and think of such elements as $\SL_2(\O)$-invariant distributions.
\end{remark}

\begin{remark}
    Conceptually, the reason that $E$ turns out to be a co-commutative co-algebra is related to the multiplicity one property of spherical vectors.
\end{remark}

\begin{remark}
    Observe that $E$ is supported only on Bernstein components of $\Mod(G)$ which contain a one-dimensional representation.
\end{remark}

Let
\[
    \Z_\sph=\bigoplus_{\text{$c$ spherical}} Z^c
\]
be the direct sum of the Bernstein centers of all components that contain a one-dimensional representation.  We think of $\Z_\sph$ as a smooth $\Z$-module, with $\Z$ as in Subsection~\ref{subsect:Y_for_generic_reps}. Note that $\Z_\sph$ is also quasi-unital.
Denote the symmetric monoidal category of smooth $\Z_\sph$-modules by $\Mod(\Z_\sph)$, and observe that $\Mod(\Z)$ acts on $\Mod(\Z_\sph)$.

Our main theorem for this subsection is:
\begin{theorem} \label{thm:oY_sph_is_over_center}
    The functor
    \[
        -\otimes_{\Z_\sph}E\co\Mod(\Z_\sph)\ra\Mod(G)
    \]
    is fully faithful, with essential image $\Mod^{\sph}(G)$. Moreover, this functor sends the unit $\Z_\sph\in\Mod(\Z_\sph)$ and relative tensor product $\otimes_{\Z_\sph}$ to $E$ and $\oY$ respectively.
\end{theorem}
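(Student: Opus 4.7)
The plan is to follow the argument of Theorem~\ref{thm:oY_gen_is_over_center}, replacing $\one_\Ydown$ by $E$ and $\Z$ by $\Z_\sph$ throughout. The three main ingredients to establish are analogs of Claims~\ref{claim:one_Y_is_phi_minus}, \ref{claim:end_of_one_Y_is_cent}, and \ref{claim:one_Y_generic}, and then one additional input to handle the monoidal structure.

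For the analog of Claim~\ref{claim:one_Y_is_phi_minus}, I would construct an enhanced invariants functor $(-)^{\SL_2(\O)}_{\mathrm{enh}} \co \Mod(G) \to \Mod(\Z_\sph)$, noting that $M^{\SL_2(\O)}$ naturally carries a $\Z_\sph$-module structure since $\SL_2(\O)$-invariants vanish outside spherical components. By Frobenius reciprocity, $\Hom_G(E, M) = M^{\SL_2(\O)}$, and, crucially, this is cleaner than in Subsection~\ref{subsect:Y_for_generic_reps}: because $\SL_2(\O)$ is compact, there is no smoothness obstruction of the kind handled by Lemma~\ref{lemma:partial_roughening}, and the adjunction $-\otimes_{\Z_\sph} E \dashv (-)^{\SL_2(\O)}_{\mathrm{enh}}$ follows at once. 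For the analog of Claim~\ref{claim:end_of_one_Y_is_cent}, I would identify $\sHom(E,E)$ with the Hecke algebra $S(\SL_2(\O) \backslash G / \SL_2(\O))$ with respect to $\SL_2(\O)$, and then with $\Z_\sph$ by invoking the Satake isomorphism and Bernstein's description of the spherical components. The analog of Claim~\ref{claim:one_Y_generic} is immediate by definition of $\Mod^\sph(G)$, and $E$ is projective by Frobenius reciprocity. The same formal argument as in the proof of Theorem~\ref{thm:oY_gen_is_over_center} then yields fully-faithfulness of $-\otimes_{\Z_\sph} E$ with essential image $\Mod^\sph(G)$.

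The main obstacle lies in the monoidal assertion: it is immediate that $\Z_\sph \otimes_{\Z_\sph} E \cong E$, but showing that $\otimes_{\Z_\sph}$ is carried to $\oY$ reduces, by bilinearity in both factors, to producing a natural isomorphism $E \oY E \cong E$ whose two induced $\Z_\sph$-module structures agree. Writing out the left-hand side,
\[
    E \oY E \;=\; S\bigl(\SL_2(\O) \backslash \M_2(F) / \SL_2(\O) \times F^\times\bigr)
\]
equipped with the middle $G$-action coming from the Weil representation, while $E = S(\SL_2(\O) \backslash G)$ is equipped with left translation. I would construct the comparison map directly, decomposing $\M_2(F)$ into $\SL_2(\O)$-bi-orbits via Smith normal form and matching the middle action explicitly using Equations~\eqref{eq:middle_unipotent_metaplectic_action}--\eqref{eq:middle_weyl_metaplectic_action} together with Remark~\ref{remark:middle_w_action}. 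Compatibility of the two $\Z_\sph$-module structures then follows from the $S_3$-symmetry of $Y$ (Remark~\ref{remark:Y_is_comm_frob}), which interchanges the roles of the two copies of $E$.
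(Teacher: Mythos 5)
Your overall reduction is correct and matches the paper: the theorem follows the pattern of Theorem~\ref{thm:oY_gen_is_over_center}, with the formal points (projectivity of $E$, compactness on each component, $\Z_\sph\xrightarrow{\sim}\sHom(E,E)$, essential image by definition) being routine — the paper just asserts them as ``clear,'' and your remarks about Frobenius reciprocity being cleaner than the $\Phi^-$ case (no analogue of Lemma~\ref{lemma:partial_roughening} needed, since $\SL_2(\O)$ is compact) are correct and a nice simplification. You also correctly identify that the one real input is an isomorphism $E\oY E\cong E$ compatible with the $\Z_\sph$-module structures, and that compatibility is taken care of by the coincidence of the three center actions on $Y$.

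The gap is in how you propose to produce that isomorphism. Your plan — decompose $\SL_2(\O)\backslash\M_2(F)/\SL_2(\O)$ via Smith normal form and ``match the middle action explicitly'' — runs into the obstacle that the middle $G$-action on $Y^{\SL_2(\O)\times\SL_2(\O)}$ is \emph{not} orbit-preserving: the action of the middle Weyl element (Remark~\ref{remark:middle_w_action}) is a partial Fourier transform on $\M_2(F)$, which scrambles the Smith-normal-form strata, so ``matching'' this against left translation on $S(\SL_2(\O)\backslash G)$ orbit-by-orbit does not have an evident recipe. The paper's proof (Claim~\ref{claim:spherical_is_idemp}) goes a quite different route: it first constructs an explicit co-unit $\varepsilon\co E\ra\one_\Ydown$ and co-multiplication $\mu\co E\ra E\oY E$ from a single distinguished distribution (the indicator of $\M_2(\O)$ times a point mass, Construction~\ref{const:E_co_mult}, normalized by the conductor $\nu(e)$), verifies the co-unitality triangle by a short computation on that one distribution, and then shows $\mu$ is an isomorphism by a soft argument: $\Phi^-_{\mathrm{enh}}(\mu)$ exhibits $\Z_\sph$ as a retract of itself on each component, hence is an isomorphism, so $\coker(\mu)$ is degenerate; but $\coker(\mu)$ is a split summand of $E\oY E\subseteq Y$, and $Y$ has no degenerate subrepresentations, forcing $\coker(\mu)=0$. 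This sidesteps the Fourier-transform computation entirely. Your proposal would need either to be replaced by this argument, or to explain how the orbit decomposition interacts with Equation~\eqref{eq:middle_weyl_metaplectic_action} — which, as written, it does not.
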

\begin{remark}
    We do not actually claim that the functor of Theorem~\ref{thm:oY_sph_is_over_center} is weak symmetric monoidal, as it does not respect the unit.
\end{remark}

The proof of this theorem follows the same outline as the proof of Theorem~\ref{thm:oY_gen_is_over_center}, only easier. It is clear that $E$ is projective, that its restriction to each component is compact, and that
\[
    \Z_\sph\ra\sHom(E,E)
\]
is an isomorphism. Thus, Theorem~\ref{thm:oY_sph_is_over_center} would immediately follow if we could prove the existence of an isomorphism:
\[
    E\oY E\cong E.
\]
This isomorphism will be the content of Claim~\ref{claim:spherical_is_idemp}.

We begin by constructing a co-unit map $\varepsilon\co E\ra\one_\Ydown$, as well as constructing a co-multiplication map $\mu\co E\ra E\oY E$. The co-multiplication $\mu$ will then be the desired isomorphism $E\cong E\oY E$. Recall that $e\co F\ra\CC^\times$ is our chosen additive character.
\begin{construction} \label{const:E_co_unit}
    Let $\nu(e)$ be the largest integer such that $e(\pi^{-\nu(e)}\O)=1$, with $\pi\in\O$ a uniformizer. We let the map
    \[
        \varepsilon\co E\ra\cInd_U^G(\theta)\cong\one_\Ydown
    \]
    be given as follows.
    
    Let $f$ be the distribution on $G$ given by the delta distribution:
    \[
        f(g)=e(b)\delta_1(\det(k))
    \]
    if $g$ has the form
    \[
        \begin{pmatrix}1 & b \\ & 1\end{pmatrix}\begin{pmatrix}1 & \\ & -\pi^{\nu(e)}\end{pmatrix}k
    \]
    with $k\in\GL_2(\O)$, and we set $f(g)=0$ otherwise. It is clear that $f$ is compactly supported modulo $U$, left-$\SL_2(\O)$-invariant, and right-$\theta$-equivariant.
    
    Therefore, $f$ defines an $\SL_2(\O)$-invariant element of the roughening
    \[
        \Hom(S(G),\cInd_U^G(\theta)).
    \]
    By the universal property in Remark~\ref{remark:maps_from_E}, we conclude that the distribution $f$ defines a map $\varepsilon\co E\ra\cInd_U^G(\theta)$.
\end{construction}

\begin{construction} \label{const:E_co_mult}
    Let ${Y}^{\SL_2(\O)\times\SL_2(\O)}$ be the space of vectors in $Y$ which are invariant under the left and right actions of $\SL_2(\O)$. We let $\mu$ be the map
    \[
        \mu\co E\ra Y^{\SL_2(\O)\times\SL_2(\O)}\cong E\oY E
    \]
    given as in Remark~\ref{remark:maps_from_E} by the $S_3\ltimes \SL_2(\O)^3$-invariant distribution (more precisely, $\SL_2(\O)$-invariant element of the roughening of $Y^{\SL_2(\O)\times\SL_2(\O)}$):
    \[
        q^{-\nu(e)}\cdot \one_{\M_2(\O)}(g)\delta_{\pi^{-\nu(e)}}(y)
    \]
    for $(g,y)\in\M_2(F)\times F^\times$.
\end{construction}

\begin{remark} \label{remark:E_co_mult_is_dist_test_func}
    Let us try to informally motivate Constructions~\ref{const:E_co_unit} and~\ref{const:E_co_mult}. Because the co-multiplication determines the co-unit, we can focus on trying to understand the map $\mu$. It turns out that $\mu$ is related to the standard ``distinguished test function'' on $\M_2(F)$ appearing in the theory of Godement--Jacquet $L$-functions.
    
    Let us give more details. In the theory of Godement--Jacquet $L$-functions, it is known that if one wants to define the $L$-function of an unramified representation, then it is sufficient to consider the specific test function $\one_{\M_2(\O)}(g)$ and to integrate it against a matrix coefficient made up of unramified vectors. Our claim is that this data manifests in our theory as the co-multiplication on $E$. Indeed, observe that in the case of $\nu(e)=0$, the co-multiplication $\mu$ corresponds to the distribution:
    \begin{equation*}
        \one_{\M_2(\O)}(g)\delta_1(y).
    \end{equation*}
    
    For more on this, see Remark~\ref{remark:E_is_assoc}. The same idea will also let us define the convolution product on the space of spherical automorphic functions in Section~\ref{sect:T_global}.
\end{remark}

\begin{remark} \label{remark:E_co_mult_is_spin}
    When $\nu(e)\neq 0$, the choice of the matrix
    \[
        \begin{pmatrix}1 & \\ & -\pi^{\nu(e)}\end{pmatrix}
    \]
    in the definition of the co-unit $\varepsilon\co E\ra\one_\Ydown$ in Construction~\ref{const:E_co_unit} seems fairly arbitrary. Indeed, any matrix $h\in\GL_2(F)$ satisfying
    \[
        hKh^{-1}\cap U=\ker(\theta)
    \]
    would have worked after appropriate modifications to $\mu\co E\ra E\oY E$ in Construction~\ref{const:E_co_mult}. This data, together with the choice of the additive character $e\co F\ra\CC^\times$ defining $\theta$, is related to something called an \emph{unramified spin structure}. The precise details of this relationship will be explored in Appendix~\ref{app:spin_struct}.
\end{remark}

\begin{remark}
  The co-multiplication $\mu\co E\ra E\oY E$ is symmetric. That is, the diagram
    \[\xymatrix{
        E \ar[r]^-{\mu} \ar[dr]_{\mu} & E\oY E \ar[d]^{\ref{remark:Y_is_commutative}} \\
        & E\oY E
    }\]
    commutes.  
\end{remark}

We now show that $\mu\co E\ra E\oY E$ and $\varepsilon\co E\ra\one_\Ydown$ are consistent with being the co-multiplication and co-unit of a co-algebra. As discussed above, the following claim immediately implies Theorem~\ref{thm:oY_sph_is_over_center}.
\begin{claim} \label{claim:spherical_is_idemp}
    The diagram
    \begin{equation} \label{eq:co_mult_of_E_is_co_unital} \xymatrix{
        E \ar[d]_{\id_E} \ar[r]^-\mu & E\oY E \ar[d]^{\mathrm{id}_E\oY\varepsilon} \\
        E \ar[r]^-\sim & E\oY\one_\Ydown
    }\end{equation}
    commutes. Moreover, the map $\mu\co E\ra E\oY E$ is an isomorphism.
\end{claim}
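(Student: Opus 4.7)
The plan is to split Claim~\ref{claim:spherical_is_idemp} into its two parts: first verify the co-unit diagram~\eqref{eq:co_mult_of_E_is_co_unital}, and then use that to reduce isomorphism of $\mu$ to a component-wise multiplicity check. The key structural idea is that the co-unit diagram immediately produces a retraction of $\mu$, and then the generic-irreducible input of Theorem~\ref{thm:middle_action} pins down the ``size'' of $E\oY E$ on each spherical component.

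For the commutativity of~\eqref{eq:co_mult_of_E_is_co_unital} I would unwind the definitions. By Remark~\ref{remark:maps_from_E}, a map out of $E$ is pinned down by an $\SL_2(\O)$-invariant element of the roughening of the target. Under this identification, $\mu$ is the distribution $q^{-\nu(e)}\one_{\M_2(\O)}(g)\delta_{\pi^{-\nu(e)}}(y)$ on $\M_2(F)\times F^\times$ (viewed inside $Y^{\SL_2(\O)\times \SL_2(\O)}$), and $\varepsilon$ is the delta distribution of Construction~\ref{const:E_co_unit}. Composing with $\id\oY\varepsilon$ amounts to contracting the second factor of the $\mu$-distribution against the $\varepsilon$-distribution inside $Y$, and then applying the explicit unit isomorphism~\eqref{eq:explicit_unitality}. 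The particular normalizations (the prefactor $q^{-\nu(e)}$, the insertion of $\begin{pmatrix}1 & \\ & -\pi^{\nu(e)}\end{pmatrix}$, and the choice of Haar measures) are tuned precisely so that the determinant twist and the $w$-action in~\eqref{eq:explicit_unitality} cancel out, leaving the identity on $E$. This is a direct delta-function computation; I do not expect it to produce surprises, but the bookkeeping is the main technical nuisance.

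Once~\eqref{eq:co_mult_of_E_is_co_unital} is verified, it exhibits $\mu$ as a split monomorphism with retraction $(\id\oY\varepsilon)$ composed with the unit isomorphism. To upgrade this splitting to an isomorphism, I decompose by Bernstein components. Both $E$ and $E\oY E$ are supported on the spherical components: for $E$ this is clear, and for $E\oY E$ it follows from Lemma~\ref{lemma:center_is_compatible}, which guarantees that the center acts on $E\oY E$ through the same idempotents as on $E$. On each spherical component $c$, the restriction $E^c$ is finitely generated projective over $Z^c$, and $(E\oY E)^c$ is a direct summand of $E^c$ via the retraction. Hence $\mu$ is an iso iff the map $\Hom(E\oY E,V)\to\Hom(E,V)$ is an iso for each irreducible $V$ appearing in $c$.

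For a generic irreducible spherical $V$, the space $\Hom(E,V)=V^{\SL_2(\O)}$ is one-dimensional. For $\Hom(E\oY E,V)$ I would use the $S_3$-symmetry of Remark~\ref{remark:Y_is_comm_frob} to relabel which $G$-action on $Y$ counts as the ``output,'' reducing the computation to the bi-$\SL_2(\O)$-invariants of $\tilde V\otimes_G Y$. By Theorem~\ref{thm:middle_action}, this is the space of bi-$\SL_2(\O)$-invariants of $\mathcal{K}(V(-1))(1)$, which is again one-dimensional. Being a nonzero map of one-dimensional spaces, $\mu^*$ is an iso, so $\mu$ is an iso over this component. The main obstacle is the reducible case (spherical components containing a Steinberg-type constituent), where the Bernstein center is not strong enough to separate irreducibles; there I would argue as in the Steinberg case at the end of the proof of Theorem~\ref{thm:middle_action}, replacing $\Phi^-$ by the Jacquet functor and invoking Lemma~\ref{lemma:mirabolic_jacquet} to compute the relevant multiplicity on $S(\M_2^{\det=0}(F)\times F^\times)$ directly.
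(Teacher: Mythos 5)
Your plan for the commutativity of Diagram~\eqref{eq:co_mult_of_E_is_co_unital} matches the paper's: unwind $\mu$, $\varepsilon$ and the explicit unitality formula~\eqref{eq:explicit_unitality} into a delta-distribution check evaluated on the distinguished $\SL_2(\O)$-invariant distribution. That part is fine.

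For the second half, showing that $\mu$ is an isomorphism, you take a genuinely different route from the paper, and there is a gap. The paper applies $\Phi_{\mathrm{enh}}^-$ to the splitting of $\mu$, identifies $\Phi^-(E\oY E)\cong I_{RL}(E)\otimes_G E\cong\Z_\sph$ via Remark~\ref{remark:phi_minus_is_trace}, and uses the indecomposability of each $Z^c$ to conclude $\Phi^-(\mu)$ is an isomorphism; the (split) cokernel of $\mu$ is then a degenerate sub-representation of $Y$, hence zero because $Y$ has no $U$-invariant vectors. You instead try to reduce to $\Hom(E\oY E,V)\ra\Hom(E,V)$ being an isomorphism for each irreducible $V$. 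That reduction is not justified: even if $\Hom(Q,V)=0$ for every simple $V$, where $Q$ is the cokernel, this only forces $Q=0$ once one knows $Q$ (equivalently $E\oY E$) is finitely generated on each component -- which is essentially the claim being proven. (You also have the retraction backwards: $E$ is a summand of $E\oY E$, not the reverse.) The paper sidesteps this precisely by replacing ``no irreducible quotients'' with the stronger and more usable ``degenerate,'' combined with the structural observation that the cokernel lives inside $Y$ and $Y$ visibly has no degenerate sub-representations.

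There is also a smaller problem with the intended multiplicity computation: Theorem~\ref{thm:middle_action} controls the relative tensor product $\tilde{V}\otimes_G Y\otimes_G V$, whereas $\Hom_G(E\oY E,V)$ unwinds to the $\SL_2(\O)\times\SL_2(\O)$-invariants of the space of maps from $Y$ to $V$ along the middle action -- a Hom, not a tensor, with $E$ rather than $V$ occupying the tensor slots. The $S_3$-symmetry lets you permute which of the three $G$-actions is the ``output,'' but it does not turn this Hom space into the quantity that Theorem~\ref{thm:middle_action} computes, so the claimed one-dimensionality does not follow as stated. By contrast, the paper's appeal to Remark~\ref{remark:phi_minus_is_trace} converts the computation into the Satake isomorphism $I_{RL}(E)\otimes_G E\cong\Z_\sph$, which is a single clean identification independent of any particular $V$ and also dispatches the Steinberg-type components without a separate Jacquet-functor argument.
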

\begin{remark}
    It does not yet make sense to ask about the co-associativity of $\mu$, since we have not assigned associativity data to $\oY$. Once we do so in Subsection~\ref{subsect:assoc_Y_from_non_sense} below, we will be able to upgrade Claim~\ref{claim:spherical_is_idemp} and show that $E$ is a co-commutative co-unital co-algebra. See also Remark~\ref{remark:E_is_assoc}.
\end{remark}

\begin{proof}[Proof of Claim~\ref{claim:spherical_is_idemp}]
    In order to prove that Diagram~\eqref{eq:co_mult_of_E_is_co_unital} commutes, it suffices to evaluate it for the uniform distribution of volume $1$ on $\SL_2(\O)$. Under $\mu$, this distribution maps into
    \[
        \Psi(g,y)=q^{-\nu(e)}\cdot\one_{\M_2(\O)}(g)\delta_{\pi^{-\nu(e)}}(y).
    \]
    We wish to apply the map $\varepsilon$, followed by the isomorphism of Construction~\ref{const:one_Y_is_unit}, and then compare the result with the delta distribution on $\SL_2(\O)$, from which we started. We do so via the formula in Equation~\eqref{eq:explicit_unitality}; plugging $\Psi$ and $\varepsilon$ in, we must show that:
    \[
        \int_{\SL_2(\O)}\left(w\begin{pmatrix}1 & \\ & -\pi^{\nu(e)}\end{pmatrix}^{-T}k^{-T}\cdot\Psi\right)\left(g^{-1},-\det(g)\right)\dtimes{k}
    \]
    is the uniform distribution of volume $1$ on $\SL_2(\O)$.
    
    However, because $\Psi$ is $\SL_2(\O)$-invariant, the above simplifies to:
    \begin{multline*}
        \left(\begin{pmatrix}-\pi^{-\nu(e)} & \\ & 1\end{pmatrix}\cdot\Psi\right)\left(g^{-1},-\det(g)\right)= \\
        =\one_{\M_2(\O)}(g^{-1})\delta_{\pi^{-\nu(e)}}(\pi^{-\nu(e)}\det(g))=\one_{\M_2(\O)}(g^{-1})\delta_{1}(\det(g)),
    \end{multline*}
    and we deduce that this is indeed the desired distribution from
    \[
        \M_2(\O)\cap\SL_2(F)=\SL_2(\O).
    \]
    
    It remains to show that $\mu$ is an isomorphism. Let us start by noting that applying the functor $\Phi^-_\mathrm{enh}$ to $\mu$, we obtain a retract of $\Z_\sph$-modules
    \[\xymatrix{
        \Phi^-_\mathrm{enh}E \ar[dr]_{=} \ar[r]^-{\Phi^-_\mathrm{enh}(\mu)} & I_{RL}(E)\otimes_G E \ar[d]^{\Phi^-_\mathrm{enh}(E\oY\varepsilon)} \\
        & \Phi^-_\mathrm{enh}E.
    }\]
    Since $I_{RL}(E)\otimes_G E\cong\Z_\sph$ has no non-trivial retracts when restricted to any component, we deduce that $\Phi^-(\mu)$ is an isomorphism.
    
    Having shown that $\Phi^-$ takes the map $\mu$ into an isomorphism, we see that the co-kernel of $\mu$ is degenerate. However, this co-kernel must be split (Diagram~\eqref{eq:co_mult_of_E_is_co_unital} splits it), and it is easy to see that $Y$ has no degenerate sub-representations. Indeed, because $Y$ consists of smooth and compactly supported functions on $\M_2(F)\times F^\times$, it is clear that it has no vector invariant under the action of $U$.
\end{proof}

We can also combine Theorem~\ref{thm:oY_gen_is_over_center} and Theorem~\ref{thm:oY_sph_is_over_center} as follows. Consider the category
\[
    \Mod(\Z)\times\Mod(\Z_\sph)
\]
whose objects are pairs $(M,N)$ of smooth $\Z$-modules $M$ and smooth $\Z_\sph$-modules $N$. Recall that we are thinking of $\Z_\sph$ as a $\Z$-module. This category admits a unique colimit preserving symmetric monoidal structure which respects the action of $\Z$, whose unit is $(\Z,0)$, and such that $(0,\Z_\sph)$ is idempotent. This symmetric monoidal structure is explicitly defined by
\begin{align*}
    (M_0,N_0)\otimes(M_1,N_1)\ra(M_0\otimes_\Z M_1,\quad & M_0\otimes_\Z N_1 \\
    \oplus & N_0\otimes_\Z M_1 \\
    \oplus & N_0\otimes_{\Z}N_1).
\end{align*}
We now conclude that
\begin{corollary} \label{cor:oY_on_deg_free_sph}
    The faithful functor
    \[
        \Mod(\Z)\times\Mod(\Z_\sph)\ra\Mod(G)
    \]
    given by
    \[
        (M,N)\mapsto (M\otimes_\Z\one_\Ydown)\oplus (N\otimes_{\Z_\sph}E)
    \]
    is weak symmetric monoidal.
\end{corollary}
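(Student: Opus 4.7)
The plan is to reduce weak symmetric monoidality of the functor $F\co(M,N)\mapsto(M\otimes_\Z\one_\Ydown)\oplus(N\otimes_{\Z_\sph}E)$ to a handful of identities in $\Mod(G)$ that have already been established in this section. Writing $\C=\Mod(\Z)\times\Mod(\Z_\sph)$, I need a natural unit isomorphism $F(\one_\C)\xrightarrow{\sim}\one_\Ydown$ and a natural tensor isomorphism $F(A)\oY F(B)\xrightarrow{\sim}F(A\otimes B)$, compatible with the symmetry and unit constraints. The unit case is immediate: the unit of $\C$ is $(\Z,0)$, so $F(\Z,0)=\Z\otimes_\Z\one_\Ydown\cong\one_\Ydown$.

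For the tensor isomorphism, I would expand $F(A_0,B_0)\oY F(A_1,B_1)$ by distributing $\oY$ across the direct sum (using that $\oY$ preserves colimits in each variable, and is therefore $\Z$-bilinear). This produces four summands of the form $X\oY X'$ with $X,X'\in\{\one_\Ydown,E\}$ after pulling out the scalar modules in $\Mod(\Z)$ or $\Mod(\Z_\sph)$. Each summand then reduces by one of three ``multiplication rules'':
\begin{enumerate}
\item $\one_\Ydown\oY\one_\Ydown\cong\one_\Ydown$, from the unit isomorphism of Construction~\ref{const:one_Y_is_unit};
\item $\one_\Ydown\oY E\cong E$, again by Construction~\ref{const:one_Y_is_unit};
\item $E\oY E\cong E$ via $\mu$, by Claim~\ref{claim:spherical_is_idemp}.
\end{enumerate}
After collecting terms the four pieces match exactly the four summands in the explicit formula for $\otimes$ on $\C$ given just above the corollary. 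The diagonal pieces reproduce the symmetric monoidal isomorphisms already supplied by Theorems~\ref{thm:oY_gen_is_over_center} and~\ref{thm:oY_sph_is_over_center}, while the mixed pieces are handled by rule (b).

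The coherence axioms (compatibility with the symmetry $s$ and the unit constraint $l$) follow formally from the fact that $(\oY,\one_\Ydown)$ is already a weak symmetric monoidal structure on $\Mod(G)$, together with the symmetry of the reductions (a)--(c); for (c) this uses the symmetry of the co-multiplication $\mu\co E\to E\oY E$ recorded in the previous subsection. Faithfulness of $F$ is then routine: both $(-)\otimes_\Z\one_\Ydown$ and $(-)\otimes_{\Z_\sph}E$ are fully faithful by Theorems~\ref{thm:oY_gen_is_over_center} and~\ref{thm:oY_sph_is_over_center}, and a morphism $(M,N)\to(M',N')$ vanishes in $\Mod(G)$ iff both components vanish in their respective essential images.

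The main obstacle I foresee is not any single isomorphism but the bookkeeping: one must keep track of $\Z$- and $\Z_\sph$-linearities carefully enough to verify that rules (a)--(c) combine with $\Z$-bilinearity of $\oY$ to yield the tensor formula \emph{on the nose}, and not merely up to non-canonical isomorphism. This is routine because every functor in sight preserves colimits (so ``scalars'' freely pass through $\oY$), but it is the one place where genuine care is required.
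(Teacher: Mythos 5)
The paper states this result as a corollary with no proof, treating it as an immediate consequence of Theorems~\ref{thm:oY_gen_is_over_center} and~\ref{thm:oY_sph_is_over_center} and Claim~\ref{claim:spherical_is_idemp}. Your proposal correctly fleshes out the intended argument: the unit reduction $F(\Z,0)\cong\one_\Ydown$, the four-summand expansion of $F(A)\oY F(B)$ using the three identifications $\one_\Ydown\oY\one_\Ydown\cong\one_\Ydown$, $\one_\Ydown\oY E\cong E$, $E\oY E\cong E$ together with $\Z$-bilinearity of $\oY$, the symmetry of $\mu$ to handle coherence of the $E\oY E$ term, and faithfulness from the full faithfulness of each factor. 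This matches the structure the paper relies on.
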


\subsection{Associativity of \texorpdfstring{$\oY$}{Y}} \label{subsect:assoc_Y_from_non_sense}

In this subsection, we will finally prove that $\oY$ uniquely extends to a symmetric monoidal structure.

Our main theorem is thus:
\begin{theorem} \label{thm:oY_assoc}
    There is a unique extension of $\oY$ from a weak symmetric monoidal structure to a symmetric monoidal structure on $\Mod(G)$.
\end{theorem}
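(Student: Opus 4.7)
The plan is to leverage Corollary~\ref{cor:oY_on_deg_free_sph} together with the fact (noted in the introduction to Section~\ref{sect:T_tensor}) that the degeneracy-free and spherical representations jointly generate $\Mod(G)$ under colimits. The key observation is that $\oY$ preserves colimits in each variable, so associativity data on the generators $\{\one_\Ydown, E\}$ propagates uniquely to all of $\Mod(G)$; and on these generators, the associator is constrained by transporting the genuine (not merely weak) symmetric monoidal structure on $\Mod(\Z)\times\Mod(\Z_\sph)$.

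For existence, I would start with the fully faithful weak symmetric monoidal functor $F\colon \Mod(\Z)\times\Mod(\Z_\sph)\to\Mod(G)$ of Corollary~\ref{cor:oY_on_deg_free_sph}. The source is honestly symmetric monoidal, its tensor product being a direct sum of relative tensor products over quasi-unital rings. Pulling the associator and coherence data of the source through the structural natural isomorphisms $F(X)\oY F(Y)\cong F(X\otimes Y)$ equips the essential image $\C_0\subseteq\Mod(G)$ with an associator satisfying the pentagon and hexagon axioms. Triangle compatibility with the unitor established in Construction~\ref{const:one_Y_is_unit} and Claim~\ref{claim:oY_left_right_unit} follows from the unit compatibility of the weak monoidal functor.

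To extend from $\C_0$ to $\Mod(G)$, I would write arbitrary $M,N,P\in\Mod(G)$ as colimits of objects in $\C_0$ and define $\alpha_{M,N,P}$ as the corresponding colimit of associators on $\C_0$; well-definedness uses that $\oY$ is colimit-preserving in each variable. The pentagon and hexagon axioms, being equalities of natural transformations between colimit-preserving multi-functors, extend from $\C_0$ to $\Mod(G)$ by the same continuity argument. This produces a bona fide symmetric monoidal structure enhancing $\oY$.

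Uniqueness comes from the same colimit principle: any two such enhancements $\alpha, \alpha'$ differ by a natural automorphism $\beta=(\alpha')^{-1}\circ\alpha$ of the trifunctor $(M,N,P)\mapsto(M\oY N)\oY P$, and $\beta$ is determined by its restriction to $\{\one_\Ydown, E\}^3$. Whenever at least one input equals $\one_\Ydown$, the triangle axiom forces the restriction of $\beta$ to be the identity; for $(E,E,E)$, the isomorphism $E\oY E\cong E$ of Claim~\ref{claim:spherical_is_idemp} identifies $\beta_{E,E,E}$ with an invertible element of $\End(E)\cong\Z_\sph$, and the pentagon for $E^{\oY 4}$ forces this element to equal its own square, hence to be the identity. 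The main obstacle I anticipate is the last step: one must check that the pentagon on $E^{\oY 4}$ is really sharp enough to pin down the associator, or equivalently (as hinted at in Remark~\ref{remark:E_is_assoc}) that the co-multiplication $\mu\colon E\to E\oY E$ is automatically co-associative once $\mu$ is invertible and satisfies the co-unit identity of Claim~\ref{claim:spherical_is_idemp}.
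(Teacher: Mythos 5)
Your uniqueness argument is essentially the paper's: reduce to the generators $\{\one_\Ydown, E\}$ using that $\oY$ preserves colimits, handle the cases with at least one $\one_\Ydown$ by unitality, and for $(E,E,E)$ use Claim~\ref{claim:spherical_is_idemp} to identify the associator with an automorphism $a$ of $E$, which the pentagon forces to be trivial. (A small quibble: the pentagon on $E^{\oY 4}$ gives $a^2 = a^3$, not $a = a^2$, since the two paths involve two and three associators respectively; either way, invertibility of $a$ gives $a = \mathrm{id}$.) Your anticipated obstacle at the end — whether the pentagon really pins down $a$ — is exactly the Idempotent Theorem of \cite{cat_weil}, which the paper invokes, so this is not in fact a gap.

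There is, however, a genuine gap in your existence argument. You write that the functor $F\colon\Mod(\Z)\times\Mod(\Z_\sph)\to\Mod(G)$ of Corollary~\ref{cor:oY_on_deg_free_sph} is \emph{fully} faithful; the corollary asserts only that it is \emph{faithful}. It is in fact not full, because $\Mod(G)$ contains ``cross'' morphisms such as the nonzero maps in $\Hom(\one_\Ydown, E)\cong\Phi^-_{\mathrm{enh}}E$ that have no preimage in $\Mod(\Z)\times\Mod(\Z_\sph)$ (where $\Hom\bigl((\Z,0),(0,\Z_\sph)\bigr)=0$). Consequently, pulling the associator of the source through $F$ gives, a priori, only a family of isomorphisms indexed by triples of objects in the essential image $\C_0$ — \emph{not} a natural transformation of functors on $\C_0$. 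Since your Kan-extension step requires the associator to be a genuine natural transformation on $\C_0$ before it can be propagated by colimits, this is precisely the nontrivial content of the existence proof, and you have silently assumed it away. The paper's existence argument is devoted almost entirely to verifying this naturality by hand: it reduces to diagrams of the shape $f\colon C\to D$ with $A,B,C,D\in\{\one_\Ydown,E\}$, disposes of the cases where some argument is $\one_\Ydown$ by unitality, uses the action of the Bernstein center to reduce to $C\neq D$, and ultimately reduces to checking $\id_E = \id_E\oY E$, which holds by functoriality. You need to supply this naturality verification to close the gap.
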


\begin{remark}
    In the previous subsection, we made some choice of isomorphism $E\cong E\oY E$. Because this was a non-canonical choice (see Remark~\ref{remark:E_co_mult_is_spin}), we had some ugly expressions. However, while in this subsection we will need to use the \emph{existence} of such an isomorphism $E\cong E\oY E$, our extension of $\oY$ to a symmetric monoidal structure will be unique, and in particular it will \emph{not} depend on the choice of isomorphism. In other words, the effects of this choice are confined to Subsection~\ref{subsect:Y_for_sph_reps}, with the exception of Remarks~\ref{remark:E_is_assoc} and~\ref{remark:E_comult_to_conv_prod}, which are not a part of the proof of Theorem~\ref{thm:oY_assoc}.
\end{remark}

Before proving Theorem~\ref{thm:oY_assoc}, let us present some nice consequences of our constructions.
\begin{remark} \label{remark:oY_is_frob}
    The symmetric monoidal structure $\oY$ on $\Mod(G)$ is a part of a slightly more sophisticated higher-categorical structure called a \emph{commutative Frobenius algebra} in $\CC$-linear presentable categories.
    
    Specifically, the functor $\Phi^-\co\Mod(G)\ra\Vect$ is a trace map on $\Mod(G)$. This is because its composition with the multiplication:
    \[
        \Phi^-\circ\oY\co\Mod(G)\otimes\Mod(G)\ra\Vect
    \]
    is the pairing $M\boxtimes N\mapsto I_{RL}(M)\otimes_G N$ (see Remark~\ref{remark:phi_minus_is_trace}), which is a self-duality on $\Mod(G)$ in the following higher-categorical sense.
    
    The $2$-category of $\CC$-linear presentable categories with colimit preserving functors is a symmetric monoidal $2$-category, whose product is given by the Lurie tensor product. In this context, $\Phi^-\circ\oY$ is a bi-linear pairing from the category $\Mod(G)$ to the unit $\Vect$ of the symmetric monoidal structure, and it is easy to see that this is in fact a perfect pairing.
\end{remark}

\begin{example} \label{example:irr_reps_are_algs}
    We can now give a much cleaner re-formulation and proof of Theorem~\ref{thm:middle_action}. For any object $V\in\Mod(G)$, we have a canonical morphism:
    \[
        V\otimes\Hom_G(\one_\Ydown,V)\cong V\oY\one_\Ydown\otimes\Hom_G(\one_\Ydown,V)\ra V\oY V.
    \]
    Theorem~\ref{thm:middle_action} can be simply stated as saying that for a generic irreducible representation $V$, the above is an isomorphism. Note that this implies that after a choice of Whittaker model (i.e., an isomorphism $\Hom_G(\one_\Ydown,V)\cong\CC$), we have $V\cong V\oY V$.
    
    In fact, something even stronger can be said: after fixing a vector $v^\gen$ in the one-dimensional space 
    \[
        \Phi^-V=\Hom_G(\one_\Ydown,V),
    \]
    a generic irreducible representation $V$ canonically becomes a unital commutative algebra with respect to $\oY$.
    
    Moreover, this strengthening of Theorem~\ref{thm:middle_action} now has an exceedingly short proof. Indeed, it immediately follows from the following general principle. Let $\C$ be a symmetric monoidal abelian category with a right exact tensor product. Then all surjections $\one_\C\twoheadrightarrow X$ from the unit induce isomorphisms $X\xrightarrow{\sim} X\otimes X$, and moreover turn $X$ into a commutative unital algebra.
\end{example}

\begin{remark} \label{remark:E_is_assoc}
    Using the Idempotent Theorem of \cite{cat_weil}, it is an easy observation that Theorem~\ref{thm:oY_assoc} and Claim~\ref{claim:spherical_is_idemp} imply that $E$ is necessarily co-associative. That is, $E$ is a co-unital co-commutative co-algebra with respect to $\oY$.
    
    This has the following implication. Fix a choice of co-multiplication on $E$ as in Remark~\ref{remark:E_co_mult_is_spin} (recall that this data can be derived from a global spin structure, a fact explored in detail in Appendix~\ref{app:spin_struct}). Let $V$ be any unital and commutative algebra with respect to $\oY$. Then the space $V_{/\SL_2(\O)}=\sHom(E,V)$ of co-invariants is automatically a quasi-unital commutative algebra over $\Z_\sph$, with respect to the \emph{usual tensor product $\otimes$} (note that invariants and co-invariants with respect to $\SL_2(\O)$ are the same, since the group is compact). In the global setting, this will let us define a multiplicative structure on a certain space of spherical automorphic functions. See Appendix~\ref{app:spin_struct} for details.
    
    Informally, the product of $V_{/\SL_2(\O)}$ comes from the product of $V$ using the ``distinguished test function'' appearing in the definition of the co-multiplication of $E$ in Construction~\ref{const:E_co_mult}. That is, to multiply two vectors of $V_{/\SL_2(\O)}$, one tensors them with the distinguished test function, and then applies the product of $V$. See also Remark~\ref{remark:E_co_mult_is_dist_test_func}.
\end{remark}

\begin{remark} \label{remark:E_comult_to_conv_prod}
    We can say something even stronger. As in Remark~\ref{remark:E_co_mult_is_spin}, fix a choice of co-multiplication on $E$, and let $V$ be any unital and commutative algebra with respect to $\oY$, as above. Note that $\one_\Ydown$ is also a co-algebra. Thus, the space $\Phi^-V=\sHom(\one_\Ydown,V)$ can also be automatically upgraded into a quasi-unital commutative algebra over the center $\Z$ (with respect to the usual tensor product $\otimes$). Moreover, the following diagram of algebras commutes:
    \[\xymatrix{
        \Z \ar[r] \ar[d] & \Phi^-V \ar[d] \\
        \Z_\sph \ar[r] & V_{/\SL_2(\O)}.
    }\]
    
    For $V$ a generic irreducible representation as in Example~\ref{example:irr_reps_are_algs}, this is not very interesting, as $\Phi^-V$ and $V_{/\SL_2(\O)}$ are both at most one-dimensional. However, this will be a much more interesting statement for $V=\cI$, the algebra of automorphic functions developed in Section~\ref{sect:T_global}. See also Appendix~\ref{app:spin_struct}.
\end{remark}

The rest of this subsection is dedicated to the proof of Theorem~\ref{thm:oY_assoc}. We begin with the following strengthening of Corollary~\ref{cor:oY_on_deg_free_sph}.
\begin{claim}
    There is at most one extension of $\oY$ from a weak symmetric monoidal structure to a symmetric monoidal structure on $\Mod(G)$. Moreover, the functor
    \[
        \Mod(\Z)\times\Mod(\Z_\sph)\ra\Mod(G)
    \]
    of Corollary~\ref{cor:oY_on_deg_free_sph} is symmetric monoidal for any such extension.
\end{claim}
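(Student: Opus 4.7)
My plan is to exploit two facts: the bifunctor $\oY$ preserves colimits in each variable (by construction, since relative tensor products and the $G$-module $Y$ appear only in ways that are manifestly colimit-preserving); and the pair $\{\one_\Ydown, E\}$ generates $\Mod(G)$ under colimits, combining Claim~\ref{claim:one_Y_generic}, the definition of $\Mod^\sph(G)$, and the fact stated in the subsection overview that degeneracy-free and spherical representations together generate $\Mod(G)$. Given any symmetric monoidal extension with associator $\alpha_{A,B,C}\co(A\oY B)\oY C\xrightarrow{\sim}A\oY(B\oY C)$, this is a natural isomorphism between tri-functors preserving colimits in each argument, so it is uniquely determined by its values on triples $(X_1, X_2, X_3)$ with each $X_i\in\{\one_\Ydown, E\}$.

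For each such triple, all three objects lie in the essential image of the functor $F\co\Mod(\Z)\times\Mod(\Z_\sph)\to\Mod(G)$ from Corollary~\ref{cor:oY_on_deg_free_sph}: write $X_i = F(M_i)$ with $M_i\in\{(\Z,0),(0,\Z_\sph)\}$. The weak symmetric monoidal structure on $F$ identifies both sides of $\alpha_{X_1, X_2, X_3}$ with $F(M_1\otimes M_2\otimes M_3)\in\{\one_\Ydown, E\}$, so $\alpha_{X_1, X_2, X_3}$ becomes an automorphism of that object. By Claim~\ref{claim:end_of_one_Y_is_cent} and the analogous isomorphism $\Z_\sph\cong\sHom(E,E)$ observed in the proof of Theorem~\ref{thm:oY_sph_is_over_center}, every such automorphism is a central unit.

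The remaining step is to show this central unit must be the identity. For any triple containing at least one copy of $\one_\Ydown$, this follows from the triangle axiom, using the unit isomorphism of Construction~\ref{const:one_Y_is_unit}. For the sole remaining triple $(E, E, E)$, I would appeal to naturality of $\alpha$ along the co-unit $\varepsilon\co E\to\one_\Ydown$ of Construction~\ref{const:E_co_unit}: the naturality square for $\varepsilon\oY\id_E\oY\id_E$ relates $\alpha_{E,E,E}$ to $\alpha_{\one_\Ydown, E, E}$, and after invoking the co-unit relation $(\varepsilon\oY\id_E)\circ\mu=\id_E$ of Claim~\ref{claim:spherical_is_idemp} to trivialize the vertical maps, one is forced to conclude $\alpha_{E,E,E}=\id$ as well.

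This pins down $\alpha$ uniquely, proving the first assertion, and it simultaneously identifies $\alpha$ on the image of $F$ with $F$ applied to the associator of the source $\Mod(\Z)\times\Mod(\Z_\sph)$, which is exactly the compatibility needed to promote $F$ from a weak symmetric monoidal functor to a genuine symmetric monoidal one, proving the second. The main obstacle is the triple $(E, E, E)$: unlike the $\one_\Ydown$-involving cases, it is not reachable directly from the triangle axiom, and pinning down $\alpha_{E,E,E}$ requires a careful use of naturality through $\varepsilon$, relying on the co-unit axiom established in Claim~\ref{claim:spherical_is_idemp}. Once this is done, the pentagon and hexagon coherence axioms for $\alpha$ at arbitrary objects follow automatically from their validity on the generators, by colimit preservation.
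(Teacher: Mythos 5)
Your proposal is correct, and it follows the paper's reduction step for step (reduce by colimit-preservation to the generators $\one_\Ydown$ and $E$, dispatch the triples containing $\one_\Ydown$ via unitality), but it handles the crucial triple $(E,E,E)$ by a genuinely different argument. The paper identifies $\alpha_{E,E,E}$ with an automorphism $a$ of $E$ via $\mu$ and then invokes the \emph{pentagon axiom} to deduce $a^2=a^3$, hence $a=\id$; this is exactly the Idempotent Theorem of \cite{cat_weil}, and it uses only the existence of an isomorphism $E\cong E\oY E$. You instead use \emph{naturality} of $\alpha$ along $\varepsilon\co E\to\one_\Ydown$ together with the co-unit axiom of Claim~\ref{claim:spherical_is_idemp}; unwinding the naturality square, both vertical maps become $\mu$ after the identifications (not identities, so ``trivialize the vertical maps'' is a bit loose---they become a fixed isomorphism), the bottom row becomes $\id_{E\oY E}$ via the derived triangle identity, and the square reads $\mu\circ a=\mu$, so $a=\id$. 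This is arguably more direct (no pentagon), but it requires the co-unit $\varepsilon$ to exist and satisfy the co-unit axiom, which the paper's argument does not need. Also note that Claim~\ref{claim:spherical_is_idemp} literally gives $(\id_E\oY\varepsilon)\circ\mu\cong\id_E$; to obtain the form $(\varepsilon\oY\id_E)\circ\mu\cong\id_E$ that you invoke, you also need the symmetry of $\mu$ (stated in the remark preceding Claim~\ref{claim:spherical_is_idemp}). Both routes give the uniqueness and the compatibility with the functor $\Mod(\Z)\times\Mod(\Z_\sph)\to\Mod(G)$.
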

\begin{proof}
    Consider any extension of $\oY$ to a symmetric monoidal structure (that is, suppose that there is some associator for $\oY$ satisfying the axioms of a symmetric monoidal structure).
    
    We want to show that any two such extensions are equal. Because $\oY$ respects colimits, it is enough to compare their associators
    \[
        (A\oY B)\oY C\xrightarrow{\sim} A\oY(B\oY C)
    \]
    on objects $A,B,C$ that are either of the two generators $\one_\Ydown,E$. Note that if any of the objects $A,B,C$ is equal to $\one_\Ydown$, then the associator is uniquely determined by unitality.
    
    Thus, it remains to show that the only possible choice for the associator
    \[
        (E\oY E)\oY E\xrightarrow{\sim} E\oY(E\oY E)
    \]
    is the one coming from the functor:
    \[
        \Mod(\Z)\times\Mod(\Z_\sph)\ra\Mod(G).
    \]
    That is, it remains to show that the only possible choice for the associator
    \[
        (E\oY E)\oY E\xrightarrow{\sim} E\oY(E\oY E)
    \]
    is the identification $(E\oY E)\oY E\cong E\cong E\oY(E\oY E)$ coming from repeated applications of Claim~\ref{claim:spherical_is_idemp}.
    
    Indeed, in this case the pentagonal axiom
    \[\xymatrix{
        ((E\oY E)\oY E)\oY E \ar[d]^\sim \ar[r]^\sim & (E\oY E)\oY (E\oY E) \ar[r]^\sim & E\oY (E\oY (E\oY E)) \\
        (E\oY (E\oY E))\oY E \ar[rr]^\sim & & E\oY ((E\oY E)\oY E) \ar[u]^\sim
    }\]
    fully determines the associator; any such associator must be given by a map $a\co E\xrightarrow{\sim} E$ (after using Claim~\ref{claim:spherical_is_idemp} to identify $(E\oY E)\oY E\cong E\cong E\oY(E\oY E)$), which must then satisfy:
    \[
        a^2=a^3.
    \]
    Note that this is the same argument as the Idempotent Theorem (Theorem~3.1.2) of \cite{cat_weil}.
\end{proof}

\begin{proof}[Proof of Theorem~\ref{thm:oY_assoc}]
    The functor:
    \[
        \Mod(\Z)\times\Mod(\Z_\sph)\ra\Mod(G)
    \]
    defines associator isomorphisms
    \[
        a_{A,B,C}\co(A\oY B)\oY C\xrightarrow{\sim} A\oY(B\oY C)
    \]
    for $A,B,C$ inside its essential image, by considering the images of the associators in its domain. Note that because $\Mod(\Z)\times\Mod(\Z_\sph)\ra\Mod(G)$ is not fully faithful, this family of isomorphisms is not necessarily natural.
    
    Let $\C_0\subseteq\Mod(G)$ be the fully faithful subcategory given by the essential image of $\Mod(\Z)\times\Mod(\Z_\sph)\ra\Mod(G)$. Because $\oY$ respects colimits, it is uniquely determined by its restriction to $\C_0$; indeed, by standard categorical arguments ($\Mod(G)$ is generated by sifted colimits of compact projective objects, and $\C_0$ contains its compact projective generators and is closed under finite direct sums), it is given by the \emph{left Kan extension}:
    \[
        A\oY B=\colim_{\substack{A_0\in\C_0 \\ \alpha\co A_0\ra A}}\colim_{\substack{B_0\in\C_0 \\ \beta\co B_0\ra B}}A_0\oY B_0.
    \]
    Thus, it is sufficient to show that the associators $a_{A,B,C}$ commute with the morphisms of $\Mod(G)$ on the essential image of $\Mod(\Z)\times\Mod(\Z_\sph)$, and therefore induce a natural morphism on $\C_0$. The required compatibilities with the rest of the symmetric monoidal structure (and the pentagonal axiom) will follow immediately from those of $\Mod(\Z)\times\Mod(\Z_\sph)$.
    
    Because $\Mod(\Z)\times\Mod(\Z_\sph)$ is generated under colimits by $\one_\Ydown$ and $E$, it is sufficient to prove that for all $A,B,C,D\in\{\one_\Ydown,E\}$ and all maps $f\co C\ra D$, the following diagram commutes:
    \[\xymatrix{
        (A\oY B)\oY C \ar[d]^{a_{A,B,C}} \ar[rr]^{(A\oY B)\oY f} & & (A\oY B)\oY D \ar[d]^{a_{A,B,D}} \\
        A\oY (B\oY C) \ar[rr]^{A\oY (B\oY f))} & & A\oY (B\oY D).
    }\]
    
    The claim follows immediately if either $A$ or $B$ is $\one_\Ydown$. Additionally, because $\oY$ respects the action of the center, and because both $\End(\one_\Ydown)$ and $\End(E)$ are covered by the center, it is enough to consider the case $C\neq D$, so either $(A,B,C,D)=(E,E,E,\one_\Ydown)$ or $(A,B,C,D)=(E,E,\one_\Ydown,E)$.
    
    Since the proof in either case is identical, it is enough to consider the case $(A,B,C,D)=(E,E,\one_\Ydown,E)$. Letting $f\co\one_\Ydown\ra E$, we must show that the following diagram commutes:
    \[\xymatrix{
        & (E\oY E)\oY \one_\Ydown \ar[rr]^{(E\oY E)\oY f} & & (E\oY E)\oY E & \\
        E\oY E \ar[ru]_{\cong} \ar[rd]^{\cong} & & & & E\oY E \ar[ld]_{\cong} \ar[lu]^{\cong}\\
        & E\oY (E\oY \one_\Ydown) \ar[rr]^{E\oY (E\oY f))} & & E\oY (E\oY E). &
    }\]
    This is equivalent to asking that the two maps
    \[
        \Hom(\one_\Ydown,E)\ra\Hom(E,E)
    \]
    given by $-\oY E$ and $(-\oY E)\oY E$ be equal.
    
    In other words, it is sufficient to prove that the map
    \[
        \Hom(E,E)\ra\Hom(E,E)
    \]
    given by $-\oY E$ is the identity map of $\Hom(E,E)$. But the center of $\Mod(G)$ covers $\Hom(E,E)$. Therefore, it is enough to test this property on the identity map of $E$, and verify that
    \[
        \id_E=\id_E\oY E.
    \]
    However, this holds by functoriality of $\oY$.
\end{proof}

\part{Abstractly Automorphic Representations} \label{part:abst_aut_cat}

\section{Global Automorphic Forms} \label{sect:T_global}

\subsection{Introduction}

Our goal for this section is to observe that an appropriate space of automorphic functions becomes a commutative algebra under the symmetric monoidal structure $\oY$.

Let $F$ be a global function field of characteristic $\neq 2$. Fix the standard Haar measure $\dtimes{g}$ on $\AA^\times$ (respectively, $\GL_2(\AA)$) such that the product of the maximal compact subgroups $\O_v^\times$ (respectively, $\GL_2(\O_v)$) is of measure $1$.

Consider the space
\[
    \cS=S(\GL_2(F)\backslash\GL_2(\AA))
\]
of automorphic functions -- smooth and compactly supported functions on the automorphic quotient $\GL_2(F)\backslash\GL_2(\AA)$. Let
\[
    p\co\cS\ra S(\AA^\times/F^\times)
\]
be the projection along the determinant map
\[
    \det\co\GL_2(\AA)\ra\AA^\times,
\]
and let $\cI$ be the kernel of $p$ (note that $\cI$ is independent of the choices of Haar measures). Given any two functions $\phi,\phi'\in\cS$ and a test function $\Psi\in S(\M_2(\AA))$, one obtains a Godement--Jacquet zeta integral
\begin{multline*}
    Z_\text{GJ}\left(\phi',\phi,\Psi,s\right)= \\
    \int_{\GL_2(\AA)}\left(\int_{\GL_2(F)\backslash\GL_2(\AA)}\phi'(h^{-T})\phi(hg)\dtimes{h}\right)\Psi(g)\abs{\det(g)}^{s+\frac{1}{2}}\dtimes{g}.
\end{multline*}
On the other hand, a single function $\phi''\in\cS$ with Whittaker function $W_{\phi''}$ also has an associated Jacquet--Langlands zeta integral
\[
    Z_\text{JL}\left(\phi'',s\right)=\int_{\AA^\times}W_{\phi''}\left(\begin{pmatrix}y & \\ & 1\end{pmatrix}\right)\abs{y}^{s-\frac{1}{2}}\dtimes{y}.
\]

Below, we will see that the equality of Godement--Jacquet and Jacquet--Langlands $L$-functions more or less induces a multiplication map on $\cS$. More precisely, we will obtain a map
\begin{equation*}
    \cI\oY\cI\ra\cI
\end{equation*}
on the kernel $\cI$ of $p\co\cS\ra S(\AA^\times/F^\times)$ which turns Godement--Jacquet zeta integrals into Jacquet--Langlands zeta integrals. We will show that this multiplication map makes $\cI$ into a commutative unital algebra \emph{with respect to the symmetric monoidal structure $\oY$}.

To put this construction in context inside the theory of automorphic representations, one can think about the multiplication map we construct as a kind of theta lifting of pairs of automorphic forms. This is essentially a shadow of the construction of $Y$ in Section~\ref{sect:GJ_vs_JL} as a Weil representation.

Before delving into the structure of this section, let us detail some more of the remarkable properties of the algebra $\cI$. For starters, it turns out that the unit map
\[
    e\co\one_\Ydown\ra\cI
\]
is surjective. This implies that while the multiplication map uniquely determines the unit, the other direction holds as well. In fact, for \emph{any} smooth $\GL_2(\AA)$-module $V$, being an $\cI$-module is a mere property, equivalent to the existence of a (necessarily unique) lift
\[\xymatrix{
    \one_\Ydown\oY V \ar[r] \ar[d]_{e\oY\id} & V \\
    \cI\oY V. \ar@{-->}[ru] &
}\]
It turns out that $\cS$, along with all irreducible automorphic representations, are $\cI$-modules. This justifies thinking of the property of being an $\cI$-module as an automorphicity property. In fact, it appears that the category $\Mod^\aut(\GL_2(\AA))$ of $\cI$-modules is a natural context for the study of automorphic representations.

\begin{remark} \label{remark:mult_is_like_conv}
    The multiplication $\cI\oY\cI\ra\cI$ continues the theme discussed in Remark~\ref{remark:oY_is_like_comm}, where the symmetric monoidal structure $\oY$ seems to mimic the behaviour of the relative tensor product over a commutative group. That is, over a commutative group $H$, the space of functions on a quotient $\Gamma\backslash H$ of the group carries a convolution product. Intuitively, it seems as though the multiplication on $\cI$ takes the same role as this convolution product, despite the group $\GL_2(\AA)$ being non-commutative. The cost of this non-commutativity is the use of the exotic symmetric monoidal structure $\oY$.
\end{remark}

One can think of the structure of this section as one big construction of the algebra structure on $\cI$. Let us be more specific.

We start in Subsection~\ref{subsect:glob_oY}, where we fix some definitions for $\oY$ in the global case, as we have only discussed $\oY$ locally so far.

In Subsection~\ref{subsect:aut_Y_mult}, we will define the desired multiplication map. In fact, we will construct a single multiplication map $m\co\cS\oY\cS\ra\widetilde{\cS}$ into the contragradient $\widetilde{\cS}$ of $\cS$ (which canonically contains $\cS$). At this point, the multiplication will simply be a map; we will not even know if the subspace $\cI\subseteq\cS$ is closed under multiplication.

In Subsection~\ref{subsect:aut_Y_unit}, we will show that this multiplication is unital by explicitly constructing a unit map $e\co\one_\Ydown\ra\cI$. In Subsection~\ref{subsect:aut_Y_assoc}, we will show that this unit map $e\co\one_\Ydown\ra\cI$ is surjective. This will imply that the multiplication we defined is associative, as well as show that $\cI$ is closed under multiplication, and establish the desired algebraic structure. Finally, in Subsection~\ref{subsect:Y_alg_sheaves}, we will discuss the property of being an $\cI$-module, and construct the category of abstractly automorphic representations.

Having established our desired constructions, in Appendix~\ref{app:spin_struct} we will fulfill a debt and show how the co-algebra $E$ of Subsection~\ref{subsect:Y_for_sph_reps} is related to global spin structures. This will allow us to show that the spherical functions in $\cI$ are an algebra with respect to the usual tensor product $\otimes$, as described in Section~\ref{sect:intro_to_triality}.

\subsection{Global Symmetric Monoidal Structure} \label{subsect:glob_oY}

We begin by turning the local symmetric monoidal structures $\oY$ at each place $F_v$ into a global symmetric monoidal structure $\oY$ on the category $\Mod(\GL_2(\AA))$ of smooth $\GL_2(\AA)$-modules.

Define
\[
    {Y_\AA}=S(\M_2(\AA)\times\AA^\times)
\]
via a restricted tensor product of the local spaces $S(\M_2(F_v)\times F_v^\times)$, with respect to the distinguished functions $\one_{\M_2(\O_v)}(g)\one_{\O_v^\times}(y)$. Fix a non-trivial additive character $e\co\AA/F\ra\CC^\times$ once and for all. Using $e$, Construction~\ref{const:middle_action} gives the space ${Y_\AA}$ a smooth $\GL_2(\AA)^3$-action. Note that in this section, we will allow ourselves to more implicitly switch between right and left $\GL_2(\AA)$ actions. However, we will always do so via the transposition map, as in Definition~\ref{def:directions_for_G_modules}.

We define the bi-functor
\[
    \oY\co\Mod(\GL_2(\AA))\times\Mod(\GL_2(\AA))\ra\Mod(\GL_2(\AA))
\]
by
\[
    V\oY V'=V\otimes_G {Y_\AA}\otimes_G V'.
\]
Here, $\otimes_G$ denotes the relative tensor product over the global group $G=\GL_2(\AA)$.

The global unit $\one_\Ydown$ is defined as the $\theta$-co-invariants of $S(\GL_2(\AA))$ with respect to the right action of $U(\AA)$, where $\theta\co U(\AA)/U(F)\ra\CC^\times$ is given by
\[
    \theta\left(\begin{pmatrix}1 & u \\ & 1\end{pmatrix}\right)=e(u).
\]
\begin{remark} \label{remark:one_Y_is_comp_ind_global}
    As in Remark~\ref{remark:one_Y_is_comp_ind} in the local case, fixing a Haar measure on $U=U(\AA)$ allows us to define a canonical isomorphism:
    \[
        \one_\Ydown\xrightarrow{\sim}\cInd_U^G(\theta),
    \]
    where $\cInd_U^G(\theta)$ is the representation of $G$ obtained from $\theta$ by induction with compact support.
\end{remark}

We now claim that the local commutativity, associativity and unitality data turns $\oY$ into a symmetric monoidal structure on $\Mod(\GL_2(\AA))$. This requires some compatibility of the commutativity, associativity and unitality data of Section~\ref{sect:T_tensor} with the distinguished vector $\one_{\M_2(\O_v)}(g)\one_{\O_v^\times}(y)$ at each place:
\begin{claim}
    For a place $v$ of $F$, let $Y_v=S(\M_2(F_v)\times F_v^\times)$, let $G_v=\GL_2(F_v)$, and let $y_v=\one_{\M_2(\O_v)}(g)\one_{\O_v^\times}(y)\in Y_v$ be the distinguished vector. The following hold for almost all places $v$:
    \begin{enumerate}
        \item \label{item:distinguished_unitality} The isomorphism $\one_\Ydown\otimes_{G_v}Y_v\cong S(G_v)$ of Construction~\ref{const:one_Y_is_unit} sends distinguished vectors to distinguished vectors.
        \item \label{item:distinguished_commutativity} The action of the permutation $(1,3)$ interchanging the left and right actions on $Y_v$ sends the distinguished vector $y_v\in Y_v$ to itself.
        \item \label{item:distinguished_associativity} The associativity constraint $Y_v\otimes_{G_v} Y_v\cong Y_v\otimes_{G_v} Y_v$ sends the distinguished vector $y_v\otimes y_v$ to itself.
    \end{enumerate}
\end{claim}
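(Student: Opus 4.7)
My plan is to identify a finite set of places $S$ outside of which all data is unramified, and to reduce all three items to a single symmetry statement about $y_v$. Let $S$ contain precisely those places where $e$ is ramified. For $v\notin S$, we have $\nu(e_v)=0$, and the Haar measure on $\M_2(F_v)$ making $e_v(\left<-,-\right>)$ self-dual assigns volume $1$ to $\M_2(\O_v)$ (since the form $\left<-,-\right>$ has unimodular Gram matrix in the standard basis, and $e_v$ has conductor $\O_v$). I will also take the distinguished vector of $\one_\Ydown$ at $v$ to be the image of $\one_{K_v}$ under $S(G_v)\twoheadrightarrow\one_\Ydown$, and the distinguished vector of $S(G_v)$ to be $\one_{K_v}$, where $K_v=\GL_2(\O_v)$.

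The main lemma is that at a good place $v$, the vector $y_v$ is invariant under the $S_3\ltimes K_v^3$-action on $Y_v$, where each $K_v$ acts via one of the three $G_v$-actions. Invariance under $K_v\times\{1\}\times K_v$ is immediate from Example~\ref{example:left_and_right_actions}, since $\M_2(\O_v)$ is stable under $K_v$ on both sides and $\det(K_v)=\O_v^\times$. Invariance under the transposition $(1,3)\in S_3$ is immediate from Equation~\eqref{eq:transpose_metaplectic_action}, as $\M_2(\O_v)$ is stable under transpose. For the order-$3$ permutation $\sigma$, I would apply Equation~\eqref{eq:ord_3_permutation_metaplectic_action}: since the self-dual measure assigns volume $1$ to $\O_v$ and $e_v$ is trivial on $\O_v$, a direct calculation gives $\sigma\cdot\one_{\M_2(\O_v)}=\one_{\M_2(\O_v)}$. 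Invariance under the middle $K_v$-action then follows by conjugation: the middle $G_v$-action is conjugate to the left $G_v$-action via a fixed element of $S_3$, so left $K_v$-invariance propagates.

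With the symmetry lemma in hand, the three items follow. Item~\ref{item:distinguished_commutativity} is immediate from $S_3$-invariance. For Item~\ref{item:distinguished_unitality}, I plug $f=\one_{K_v}$ and $\Psi=y_v$ into the explicit formula from Equation~\eqref{eq:explicit_unitality}: since $wh^{-T}\in K_v$ for $h\in K_v$ and $y_v$ is invariant under the middle $K_v$-action, the integrand reduces to $\one_{K_v}(h)\cdot y_v(g^{-1},-\det(g))$, whose integral over $G_v$ equals $\one_{K_v}(g)$, using $\vol(K_v)=1$ and the elementary observation that $g\in K_v$ if and only if $g^{-1}\in\M_2(\O_v)$ and $\det(g)\in\O_v^\times$. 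For Item~\ref{item:distinguished_associativity}, I would invoke the uniqueness of the associator from Theorem~\ref{thm:oY_assoc}: the spherical element $y_v\otimes y_v\in Y_v\otimes_{G_v}Y_v$ lies in a space where the spherical Hecke algebra acts with multiplicity one, so the associator must act on it as a scalar, and this scalar is forced to be $1$ by combining the unitality already established in Item~\ref{item:distinguished_unitality} with the co-associativity of the spherical co-multiplication on $E_v$ coming from Remark~\ref{remark:E_is_assoc} (which at unramified places is tied directly to $y_v$ via Construction~\ref{const:E_co_mult}).

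The main obstacle will be Item~\ref{item:distinguished_associativity}: the associator produced by Theorem~\ref{thm:oY_assoc} is defined rather indirectly (as a left Kan extension from the subcategory $\C_0$), so evaluating it on the concrete vector $y_v\otimes y_v$ requires either re-tracing the proof of Theorem~\ref{thm:oY_assoc} to extract explicit formulas, or else setting up the spherical rigidity argument sketched above. The cleanest route is likely the latter, leveraging the co-associativity of $E_v$ as a co-algebra to identify the scalar.
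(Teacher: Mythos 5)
Your approach is essentially correct, but it diverges from the paper's and is somewhat less economical. The paper's proof is uniform: it uses that $E_v$ is a co-commutative, co-unital, co-associative co-algebra (Remark~\ref{remark:E_is_assoc}), observes that $f_v=\one_{\GL_2(\O_v)}\in E_v$ maps to the distinguished vector of $\one_{\Ydown,v}$ under the co-unit and to $f_v\otimes y_v\otimes f_v$ under the co-multiplication, and then reads off Item~1 from co-unitality, Item~2 from co-commutativity, and Item~3 from co-associativity. You instead prove Items~1 and~2 by an explicit symmetry lemma ($S_3\ltimes K_v^3$-invariance of $y_v$) plus a direct calculation from Equation~\eqref{eq:explicit_unitality}, and only for Item~3 do you fall back onto the same co-algebra structure that powers the paper's argument. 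What your route buys is that it demystifies Items~1 and~2 into elementary checks; what it loses is the uniformity, and it forces you to extract explicit formulas that the paper sidesteps entirely.

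There is one genuine gap you should close. You claim $\sigma\cdot y_v=y_v$ by applying Equation~\eqref{eq:ord_3_permutation_metaplectic_action}, but that formula describes the action of $\sigma$ on $S(\M_2(F_v))$, \emph{not} on $Y_v=S(\M_2(F_v)\times F_v^\times)$. Since $Y_v$ is defined as $\cInd_{S_3\ltimes G^{3,\det=1}}^{S_3\ltimes G^3}S(\M_2(F_v))$ (Construction~\ref{const:middle_action}), the $\sigma$-action on $Y_v$ differs from the naive one by a conjugation through the section $y\mapsto(1,\mathrm{diag}(y,1),1)$: writing $s(y)\sigma=\sigma\cdot(\mathrm{diag}(y,1),\mathrm{diag}(y^{-1},1),1)\cdot s(y)$, one picks up an extra factor acting via Equation~\eqref{eq:S_2_F_3_metaplectic_action}. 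This extra factor is trivial precisely because $y\in\O_v^\times$ and the matrices involved lie in $K_v$, so your conclusion is right, but the step as written does not establish it. A secondary issue is a mild circularity in the order of your argument: you want to deduce middle-$K_v$-invariance of $y_v$ by conjugating left-$K_v$-invariance by $\sigma$, which requires having already established $\sigma$-invariance of $y_v$ in $Y_v$, which is exactly the delicate computation just described. Spell that out before invoking the conjugation.
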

\begin{proof}
    Recall that for every place $v$, the space $E_v=\cInd_{\SL_2(\O_v)}^{G_v}(\one_{\SL_2(\O_v)})$ is a co-commutative, co-unital, co-algebra by Remark~\ref{remark:E_is_assoc}. For places $v$ where $e_v$ is unramified, denote by $f_v$ the vector $\one_{\GL_2(\O)}\in E_v$.
    
    We observe that under the co-unit $E_v\ra\one_{\Ydown,v}$ of Construction~\ref{const:E_co_unit}, the vector $f_v$ maps to the distinguished vector $\one_{\GL_2(F_v)}$ of $\one_{\Ydown,v}$. Moreover, observe that under the co-multiplication map $E_v\ra E_v\oY E_v$ of Construction~\ref{const:E_co_mult}, the vector $f_v$ maps to $f_v\otimes y_v\otimes f_v\in E_v\otimes_{G_v}Y_v\otimes_{G_v} E_v$.
    
    Therefore, the co-unitality of $E_v$ implies Item~\ref{item:distinguished_unitality}, the co-commutativity of $E_v$ implies Item~\ref{item:distinguished_commutativity}, and the co-associativity of $E_v$ implies Item~\ref{item:distinguished_associativity}.
\end{proof}

\begin{corollary}
    The local commutativity, associativity and unitality data comprise well-defined global maps, turning $\oY$ into a symmetric monoidal structure on $\Mod(\GL_2(\AA))$.
\end{corollary}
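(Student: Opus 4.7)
The plan is to bootstrap the global symmetric monoidal structure from the local ones, using the preceding claim as the unramified-place compatibility data. The key observation is that $Y_\AA$ and $\one_\Ydown$ were defined place-by-place as restricted tensor products of their local counterparts $Y_v$ and $\one_{\Ydown,v}$ with respect to the distinguished vectors $y_v \in Y_v$ and (analogously) the characteristic function of $\GL_2(\O_v)$. Consequently, whenever $V_1,\dots,V_n$ are factorizable $\GL_2(\AA)$-modules built as restricted tensor products of local smooth $G_v$-modules with spherical vectors chosen almost everywhere, iterated $\oY$-products such as $V_1 \oY V_2$ and $(V_1 \oY V_2) \oY V_3$ are themselves computed factor by factor, and so structure morphisms between them are nothing more than tuples of local structure morphisms which agree with the distinguished vectors at almost every place.

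With this in hand, I would first assemble the three structure natural isomorphisms on factorizable modules. Item~\ref{item:distinguished_unitality} of the preceding claim shows that the local unitor $\one_{\Ydown,v} \otimes_{G_v} Y_v \xrightarrow{\sim} S(G_v)$ matches distinguished vectors, so tensoring across all places produces a global unitor $\one_\Ydown \oY V \xrightarrow{\sim} V$. Item~\ref{item:distinguished_commutativity} provides the global commutator from the local $S_3$-actions on $Y_v$. Item~\ref{item:distinguished_associativity} delivers the global associator $(V_1 \oY V_2) \oY V_3 \xrightarrow{\sim} V_1 \oY (V_2 \oY V_3)$. Since $\oY$ is colimit-preserving in each variable and factorizable modules generate $\Mod(\GL_2(\AA))$ under colimits, each of these structure morphisms then extends uniquely to a natural transformation between colimit-preserving functors on the entire category.

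Finally, the coherence axioms (triangle, pentagon, hexagon) need to be checked. Each is an equality of natural transformations between colimit-preserving functors, so it suffices to test on factorizable inputs. On such inputs, every axiom splits as a product of its local analogues, all of which hold by the symmetric monoidal structure on $\Mod(G_v)$ produced in Section~\ref{sect:T_tensor}. The main obstacle is really the set-up in the first step: making precise the sense in which the global relative tensor products $V \otimes_{\GL_2(\AA)} Y_\AA \otimes_{\GL_2(\AA)} V'$ decompose as restricted tensor products of their local analogues, so that the local maps literally glue into a single global map rather than merely matching up in some weaker sense. This is a standard but nontrivial exercise in adelic bookkeeping; once it is handled (either directly on factorizable generators or by appealing to a description of $\Mod(\GL_2(\AA))$ as a restricted tensor product of the local categories $\Mod(G_v)$ in $\CC$-linear presentable categories), the remainder of the argument is formal.
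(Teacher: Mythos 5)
Your proposal is correct and takes essentially the same approach as the paper, which treats the corollary as following immediately from the preceding claim about distinguished-vector compatibility at almost all places. You are spelling out the implicit gluing argument — the restricted tensor product structure of $Y_\AA$ and $\one_\Ydown$, the factor-by-factor computation of $\oY$-products on factorizable generators, extension via colimits, and place-by-place verification of coherence — that the paper leaves to the reader.
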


\subsection{The \texorpdfstring{$\oY$}{Y}-Algebra of Automorphic Forms} \label{subsect:aut_Y_mult}

In this subsection, we will define the promised multiplication map
\[
    m\co\cS\oY\cS\ra\widetilde{\cS}.
\]
Later, in Subsection~\ref{subsect:aut_Y_assoc}, we will show an associativity-type statement for it, as well as prove that the image of $\cI\oY\cI$ lies inside $\cI$. This will turn $\cI$ into an algebra.

Let us begin with an informal discussion. We are trying to construct a map
\[
    m\co\cS\oY\cS=\cS\otimes_G {Y_\AA}\otimes_G\cS\ra\widetilde{\cS}
\]
which sends Godement--Jacquet zeta integrals to Jacquet--Langlands zeta integrals. This map should take as input pairs of automorphic functions, as well as a test function from ${Y_\AA}$, and yield an element of $\widetilde{\cS}$. We think of the co-domain $\widetilde{\cS}$ as the space of smooth functions on $\GL_2(F)\backslash\GL_2(\AA)$, without any conditions on growth. The Godement--Jacquet zeta integral of the input data (two automorphic functions and a test function) should be the same as the Jacquet--Langlands zeta integral of the output (a smooth function on $\GL_2(F)\backslash\GL_2(\AA)$). We will formalize this compatibility property in Remark~\ref{remark:cI_mult_GJ_vs_JL}.

This compatibility with zeta integrals should in principle be enough to recover the multiplication map $m$, for the following reason. Recovering a sufficiently nice function $\phi''\in\widetilde{\cS}$ from its family of Jacquet--Langlands zeta integrals is fairly standard. One simply integrates the zeta integrals over all unitary characters $\chi\co\AA^\times/F^\times\ra\CC^\times$ (i.e., applies the inverse Mellin transform) to isolate $\phi''(1)$. The rest of the values of $\phi''$ can be recovered by the $\GL_2(\AA)$-equivariance of the map we are trying to construct.

Let $\prescript{}{\GL_2(F)\backslash}{{Y_\AA}}_{/\GL_2(F)}$ denote the co-invariants of ${Y_\AA}$ with respect to the left and right $\GL_2(F)$-actions. The above discussion suggests defining a linear functional
\[
    \prescript{}{\GL_2(F)\backslash}{{Y_\AA}}_{/\GL_2(F)}=\cS\otimes_G {Y_\AA}\otimes_G\cS\ra\CC
\]
that should informally correspond to taking the Godement--Jacquet zeta integral of the input, and applying the inverse Mellin transform to it. If we did everything correctly, it will then be invariant under the middle action of $\GL_2(F)$. With this extra invariance property, we will be able to equivariantly extend the map $\cS\otimes_G {Y_\AA}\otimes_G\cS\ra\CC$ into the desired multiplication map $\cS\otimes_G {Y_\AA}\otimes_G\cS\ra\widetilde{\cS}$.

In other words, due to the universal property of the contragradient, it is enough to give a linear functional
\[
    \mu\co {Y_\AA}\ra\CC
\]
which is invariant under all three actions of $\GL_2(F)$. Note that this goal is inherently symmetric in the three actions.

The structure of the rest of this subsection is as follows. We will formally define the desired linear functional in Subsubsection~\ref{subsubsect:aut_Y_mult_def}. In Subsubsection~\ref{subsubsect:aut_Y_mult_inv_functional}, we will show that this functional is invariant under the full action of $\GL_2(F)^3$. Finally, in Subsubsection~\ref{subsubsect:aut_Y_mult}, we will define the multiplication map $m\co\cS\oY\cS\ra\widetilde{\cS}$ itself, and discuss its compatibility with zeta integrals.

\subsubsection{Defining the Functional} \label{subsubsect:aut_Y_mult_def}

We propose the following functional, which will be justified by Remark~\ref{remark:cI_mult_GJ_vs_JL} below.
\begin{definition}
    Define the linear functional
    \begin{align*}
        \mu\co {Y_\AA}\ra\CC
    \end{align*}
    by the formula
    \[
        \mu(\Psi)\mapsto\sum_{(\xi,q)\in\M_2(F)\times F^\times}\Psi(\xi,q).
    \]
\end{definition}
    

\begin{remark} \label{remark:Y_mult_invar_mirabolic}
    The functional $\mu$ is clearly invariant with respect to the left and right actions of $\GL_2(F)$. Moreover, $\mu$ is also invariant with respect to the middle action of $P_2(F)$. This follows using the explicit formula for the middle action of the mirabolic group on ${Y_\AA}$ given in Equation~\eqref{eq:mirabolic_middle_action}.
\end{remark}

\begin{remark}
    The functional $\mu$ is clearly symmetric under the action of the transposition of $\M_2(\AA)$ on ${Y_\AA}$ (see Remark~\ref{remark:Y_is_commutative}).
\end{remark}

\subsubsection{Invariance of the Functional} \label{subsubsect:aut_Y_mult_inv_functional}

Our goal for this subsubsection is to prove that $\mu$ is indeed invariant under all three of the $\GL_2(F)$-actions. This will give us a map
\[
    m\co\cS\oY\cS\ra\widetilde{\cS}
\]
into the smoothening $\widetilde{\cS}$ of the \emph{dual} of $\cS$ (i.e., the contragradient of $\cS)$.

\begin{remark}
    The reader should note that the strategy we are proposing looks like a kind of theta lifting. The image of $\cS\oY\cS$ in $\widetilde{\cS}$ is essentially given by a theta function, up to issues related to the center of $\GL_2$.
\end{remark}

In fact, we prove something even stronger:
\begin{proposition} \label{prop:Y_mult_invar}
    The functional $\mu\co {Y_\AA}\ra\CC$ is invariant with respect to the $S_3\ltimes \GL_2(F)^3$-action of Remark~\ref{remark:Y_is_comm_frob}.
\end{proposition}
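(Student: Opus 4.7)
By Remark~\ref{remark:Y_mult_invar_mirabolic}, $\mu$ is already invariant under the left and right $\GL_2(F)$-actions and under the middle action of the mirabolic subgroup $P_2(F)$. Since $P_2(F)$ together with the Weyl element $w=\begin{pmatrix}&-1\\1&\end{pmatrix}$ generates $\GL_2(F)$, it suffices to establish: (i) middle-$w$ invariance, which upgrades the mirabolic invariance to the full middle-$\GL_2(F)$ invariance and hence gives $\GL_2(F)^3$-invariance; (ii) $S_3$-invariance, so that the semi-direct product acts invariantly; and (iii) extension from the determinant-one subgroup to the full $\AA^\times$-slice. The recurring technical input will be the product formula $\abs{y}_\AA=1$ for $y\in F^\times$.

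\textbf{Middle Weyl invariance via Poisson summation.} By Remark~\ref{remark:middle_w_action}, at a fixed $y=q\in F^\times$ the function $(w\cdot\Psi)(\cdot,q)$ equals $\abs{q}^2$ times the Fourier transform of $\Psi(\cdot,q)$ against the pairing $(\xi,h)\mapsto -q\left<\xi,h\right>$, computed with the measure $\d{h}$ on $\M_2(\AA)$ that is self-dual for $e(\left<\cdot,\cdot\right>)$. The pairing $\left<\cdot,\cdot\right>$ is unimodular on $\M_2(F)$, so $\M_2(F)\subseteq\M_2(\AA)$ is a self-dual lattice of covolume $1$ under $\d{h}$. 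Applying the standard Poisson summation for the unscaled pairing and absorbing the factor $-q$ via the bijection $\xi\mapsto -q\xi$ of $\M_2(F)$ (valid since $q\in F^\times$), one obtains
\[
\sum_{\xi\in\M_2(F)}(w\cdot\Psi)(\xi,q)\;=\;\abs{q}^2\sum_{\xi\in\M_2(F)}\Psi(\xi,q).
\]
Because $\abs{q}_\AA=1$ for $q\in F^\times$, the prefactor is trivial, and summing over $q\in F^\times$ yields $\mu(w\cdot\Psi)=\mu(\Psi)$.

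\textbf{Remaining steps.} The transposition $(1,3)\in S_3$ acts on $Y_\AA$ by $\Psi(g,y)\mapsto\Psi(g^T,y)$, and the bijection $\xi\mapsto\xi^T$ of $\M_2(F)$ preserves $\mu$. For the $3$-cycle $\sigma\in S_3$, I plan to interpret $\mu$ at each fixed $q$ as the classical theta functional $\Psi_0\mapsto\sum_{\xi\in\M_2(F)}\Psi_0(\xi)$ for the Weil representation of $\Mp(U\otimes W)(\AA)$ on $S(\M_2(\AA))$; this functional is invariant under all $F$-rational metaplectic elements by the standard automorphy of the theta kernel, in particular under the lift of $\sigma$ constructed in Proposition~\ref{prop:Y_three_actions_coincide}. (A direct alternative is a two-dimensional Poisson summation applied to Equation~\eqref{eq:ord_3_permutation_metaplectic_action}.) To pass from $S_3\ltimes\GL_2(F)^{3,\det=1}$ to the full $S_3\ltimes\GL_2(F)^3$, Equation~\eqref{eq:induced_action_on_Y} shows that the representative $\left(1,\begin{pmatrix}y & \\ & 1\end{pmatrix},1\right)$ with $y\in F^\times$ acts by $\Psi(m,q)\mapsto\abs{y}\,\Psi(m,qy)$; the change of variable $q\mapsto qy^{-1}$ combined with $\abs{y}_\AA=1$ yields the required invariance.

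\textbf{Main obstacle.} The delicate point is matching the $\abs{q}^2$ prefactor in the middle-Weyl action against the measure rescaling in Poisson summation. This cancellation has no local analog and relies essentially on the global product formula, which is appropriate given that $\mu$ is really an adelic theta functional in disguise.
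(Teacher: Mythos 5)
Your proof is correct and follows essentially the same route as the paper's: invariance under the metaplectic rational points is reduced to automorphy of the theta functional, with the middle Weyl-element invariance made explicit via Poisson summation on the self-dual lattice $\M_2(F)\subseteq\M_2(\AA)$, the $q$-prefactor dying by the product formula. The only cosmetic difference is that you absorb the scalar $-q$ in the exponent by the bijection $\xi\mapsto -q\xi$, whereas the paper reindexes via the adjugate identity $\left<\xi,h\right>=\tr(w^{-1}\xi^T w h)$; and the appeal to $\GL_2(F)^{3,\det=1}$-plus-coset-extension is slightly redundant once you have middle-$\GL_2(F)$, left, and right invariance directly.
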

\begin{proof}
    This immediately follows from the invariance of the theta functional
    \begin{align*}
        S(\M_2(\AA)) & \ra\CC \\
        \Psi & \mapsto\sum_{\xi\in\M_2(F)}\Psi(\xi)
    \end{align*}
    under the action of the group of rational points of the metaplectic group used in the construction of ${Y_\AA}$. 
    
    Nevertheless, let us explicitly show the invariance under the middle action of $\GL_2(F)$. By Remark~\ref{remark:Y_mult_invar_mirabolic}, it is sufficient to prove that $\mu$ is invariant under the middle action of the matrix $w=\begin{pmatrix} & -1 \\ 1 & \end{pmatrix}$. Using Remark~\ref{remark:middle_w_action}, it is enough to show that:
    \[
        \sum_{(\xi,q)\in\M_2(F)\times F^\times}\Psi(\xi,q)=\sum_{(\xi,q)\in\M_2(F)\times F^\times}\abs{q}^2\int_{\M_2(F)}\Psi(h,q)\cdot e\bigg(-q\cdot\left<\xi,h\right>\bigg)\d{h}.
    \]
    However,
    \begin{multline*}
        \sum_{(\xi,q)\in\M_2(F)\times F^\times}\abs{q}^2\int_{\M_2(F)}\Psi(h,q)\cdot e\bigg(-q\cdot\left<\xi,h\right>\bigg)\d{h}= \\
        =\sum_{(\xi,q)\in\M_2(F)\times F^\times}\int_{\M_2(F)}\Psi(h,q)\cdot e\bigg(-\tr(\xi^T h)\bigg)\d{h},
    \end{multline*}
    where we have used the identity
    \[
        \left<\xi,h\right>=\tr(w^{-1}\xi^T wh).
    \]
    We are now done by the Poisson summation formula:
    \[
        \sum_{(\xi,q)\in\M_2(F)\times F^\times}\Psi(\xi,q)=\sum_{(\xi,q)\in\M_2(F)\times F^\times}\int_{\M_2(F)}\Psi(h,q)\cdot e\bigg(-\tr(\xi^T h)\bigg)\d{h}.
    \]
\end{proof}

\subsubsection{Defining the Multiplication} \label{subsubsect:aut_Y_mult}

Proposition~\ref{prop:Y_mult_invar} justifies the following construction:
\begin{construction} \label{const:aut_mult}
    We define a map $m\co\cS\oY\cS\ra\widetilde{\cS}$ by composing the identification
    \[
        \cS\oY\cS\cong\prescript{}{\GL_2(F)\backslash}{{Y_\AA}}_{/\GL_2(F)}
    \]
    with the map
    \begin{align*}
        \prescript{}{\GL_2(F)\backslash}{{Y_\AA}}_{/\GL_2(F)} & \ra\widetilde{\cS} \\
        \Psi & \mapsto\mu(g\cdot \Psi).
    \end{align*}
    Here, $g\cdot \Psi$ denotes the middle action.
\end{construction}

In our informal discussion at the beginning of this subsection, it was emphasized that the multiplication map $m$ should be compatible with zeta integrals in some sense. Let us formalize this.
\begin{remark} \label{remark:cI_mult_GJ_vs_JL}
    The multiplication $m\co\cS\oY\cS\ra\widetilde{\cS}$ sends Godement--Jacquet zeta integrals into Jacquet--Langlands zeta integrals. Specifically, we claim that:
    \[
        Z_\text{JL}\left(m(\phi'\otimes\Psi\otimes f\otimes\phi),s\right)=Z_\text{GJ}\left(\phi',\phi,\Psi,s\right)\cdot\int_{\AA^\times} f(y)\abs{y}^{s+\frac{1}{2}}\dtimes{y},
    \]
    where $\Psi\otimes f\in {Y_\AA}$, $\phi,\phi'\in\cS$, and
    \begin{align*}
        Z_\text{JL}\left(\phi'',s\right) & =\int_{\AA^\times} W_{\phi''}\left(\begin{pmatrix}y & \\ & 1\end{pmatrix}\right)\abs{y}^{s-\frac{1}{2}}\dtimes{y}, \\
        Z_\text{GJ}\left(\phi',\phi,\Psi,s\right) & =\int_{\GL_2(\AA)} \left(\int_{\GL_2(F)\backslash\GL_2(\AA)}\phi'(h^{-T})\phi(hg) \dtimes{h}\right) \times \\
        & \qquad\qquad\qquad\qquad\qquad\times\Psi(g)\abs{\det(g)}^{s+\frac{1}{2}}\dtimes{g} \\
    \end{align*}
    are the Jacquet--Langlands and Godement--Jacquet zeta integrals, respectively. Here, $W_{\phi''}$ is the Whittaker function corresponding to $\phi''\in\widetilde{\cS}$. This can be easily seen via a direct computation.
\end{remark}

\subsection{Unitality of \texorpdfstring{$\cS$}{S}} \label{subsect:aut_Y_unit}

In the previous subsection, we defined a multiplication map $m\co\cS\oY\cS\ra\widetilde{\cS}$ (Construction~\ref{const:aut_mult}). We have not yet proven that $\cI\oY\cI$ maps to $\cI$, and not yet shown any associativity properties of $m$. In this subsection, we will discuss the \emph{unitality} of the multiplication $m$.

Recall that in a category $\C$ with a symmetric monoidal structure $\otimes$, a commutative algebra object $A\in\C$ is called \emph{unital} if there exists a (necessarily unique) map $e\co\one_\C\ra A$ such that
\begin{equation*}\xymatrix{
    A \ar[r]^-\sim & A\otimes\one_\C \ar[r]^{A\otimes e} & A\otimes A \ar[r] & A
}\end{equation*}
is the identity map.

Specifically, in this subsection, we will begin by constructing a map $e\co\one_\Ydown\ra\cS$ (which, in fact, factors through $\cI$). Afterwards, we will show (Corollary~\ref{cor:mult_on_cS_is_unital}) that the composition
\begin{equation} \label{eq:unitality_of_cS} \xymatrix{
    \cS \ar@{=}[r] & \cS\oY\one_\Ydown \ar[r]^{\cS\oY e} & \cS\oY\cS \ar[r]^-m & \widetilde{\cS}
}\end{equation}
is the inclusion $\cS\subseteq\widetilde{\cS}$ adjoint to the Petersson pairing $\cS\otimes_G\cS\ra\CC$. This is the desired unitality of $m$, which will later translate into $e$ being the unit for the commutative algebra $\cI$ with its multiplication induced by $m$.

\begin{remark}
    We are going to give an explicit construction of the unit map $e$ in this subsection, while in the previous subsection we gave an explicit construction for the multiplication map $m$. However, these two constructions are not independent, and more or less determine each other.
    
    Indeed, it should be noted that as with all algebras, the multiplication map $m$ (given in Construction~\ref{const:aut_mult}) and the unitality property (given in Diagram~\eqref{eq:unitality_of_cS}) are already sufficient to uniquely determine $e$. Moreover, in our case the other direction holds as well; this means the following. We will show in Subsection~\ref{subsect:aut_Y_assoc} that the unit map $e$ has image $\cI\subseteq\cS$. This will mean that the unit map $e$ already uniquely determines the restriction of the multiplication map $m$ to $\cI\oY\cS$ (see also Remark~\ref{remark:e_determines_mult}).
\end{remark}

Let us begin. The cleanest way to describe the map $e\co\one_\Ydown\ra\cS$ is by looking at its dual, $W\co\widetilde{\cS}\ra\widetilde{\one_\Ydown}$:
\begin{construction} \label{const:whitt_map}
    We think of $\widetilde{\cS}$ as the space of smooth functions on the automorphic quotient $\GL_2(F)\backslash \GL_2(\AA)$, and of $\widetilde{\one_\Ydown}$ as the space of $\theta^{-1}$-equivariant functions on $\GL_2(\AA)$. We define the map $W\co\widetilde{\cS}\ra\widetilde{\one_\Ydown}$ by sending an automorphic function to its corresponding Whittaker function:
    \[
        \phi(g)\mapsto W_\phi(g)=\int_{U(\AA)/U(F)}\phi(ug)\theta(u)\d{u}.
    \]
\end{construction}

We now claim that
\begin{claim} \label{claim:unit_contra_to_whittaker}
    There is a unique map $\one_\Ydown\ra\cS$ such that its contragradient is the map $W\co\widetilde{\cS}\ra\widetilde{\one_\Ydown}$ of Construction~\ref{const:whitt_map}.
\end{claim}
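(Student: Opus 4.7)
The strategy is to construct the unit map $e\co \one_\Ydown \ra \cS$ explicitly as a Poincar\'e series, and then identify its contragradient with $W$. Using Remark~\ref{remark:one_Y_is_comp_ind_global} to identify $\one_\Ydown \cong \cInd_{U(\AA)}^{\GL_2(\AA)}(\theta)$, I view an element $f \in \one_\Ydown$ as a smooth function on $\GL_2(\AA)$ satisfying $f(ug) = \theta(u) f(g)$ for $u \in U(\AA)$ and with compact support modulo $U(\AA)$. Since the additive character $\AA/F \ra \CC^\times$ used to define $\theta$ is trivial on $F$, the character $\theta$ is trivial on $U(F)$, so the Poincar\'e series
\[
    e(f)(g) := \sum_{\gamma \in U(F) \backslash \GL_2(F)} f(\gamma g)
\]
is formally well-defined, manifestly left $\GL_2(F)$-invariant, and smooth in $g$.

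The next step is to verify that $e(f) \in \cS$. Two standard adelic facts do the work: $\GL_2(F)$ is discrete in $\GL_2(\AA)$, and $U(F) \backslash U(\AA) \cong F \backslash \AA$ is compact. Together these force $U(F) \backslash \GL_2(F)$ to be a discrete subset of $U(\AA) \backslash \GL_2(\AA)$, which combined with the compact-support-mod-$U(\AA)$ condition on $f$ makes the sum locally finite. For compactness of the support of $e(f)$ modulo $\GL_2(F)$, observe that its support is the image of $U(\AA) \cdot K$, where $K \subset \GL_2(\AA)$ is a compact lift of the support of $f$ modulo $U(\AA)$; this image factors through $U(F) \backslash (U(\AA) \cdot K)$, which is compact since $U(F) \backslash U(\AA)$ is.

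To identify the contragradient, I carry out an unfold--refold computation: for $\phi \in \widetilde{\cS}$, using $\GL_2(F)$-invariance of $\phi$ unfolds the outer integral, giving
\[
    \langle \phi, e(f) \rangle = \int_{\GL_2(F) \backslash \GL_2(\AA)} \phi(g) \sum_\gamma f(\gamma g) \,dg = \int_{U(F) \backslash \GL_2(\AA)} \phi(g) f(g) \,dg,
\]
and then folding this along $U(F) \backslash U(\AA)$ while using $f(ug) = \theta(u) f(g)$ pulls $f(g)$ out of the inner integral and recognises what remains as $W_\phi(g)$, yielding $\langle W(\phi), f \rangle$. Uniqueness of $e$ then follows immediately from nondegeneracy of the canonical pairing $\cS \otimes \widetilde{\cS} \ra \CC$ between a smooth representation and its contragradient. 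No serious obstacle appears in this plan; beyond the adelic discreteness and compactness inputs above, the argument is a direct computation following the usual route for passing between automorphic forms and Whittaker functions.
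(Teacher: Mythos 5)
Your proposal is correct and matches the paper's approach: the paper proves uniqueness by faithfulness of the contragradient and then exhibits existence via the same Poincar\'e series $\sum_{\gamma\in U_2(F)\backslash\GL_2(F)}W(\gamma g)$ in Construction~\ref{const:unit_of_cS}. You spell out the local finiteness, compact support, and unfold--refold verifications that the paper leaves implicit, but the route is the same.
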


Indeed, it is clear that the contragradient is a faithful functor. Therefore, we can prove Claim~\ref{claim:unit_contra_to_whittaker} by directly constructing the desired map $e\co\one_\Ydown\ra\cS$. This will be our unit for the multiplication on the algebra $\cI$.
\begin{construction} \label{const:unit_of_cS}
    Define the map
    \[
        e\co\one_\Ydown\ra\cS
    \]
    as the composition of
    \begin{align*}
        S(\GL_2(\AA))_{/U_2(\AA),\theta} & \ra S(U_2(F)\backslash\GL_2(\AA)) \\
        f(g) & \mapsto\int_{U_2(\AA)}f(ug)\theta(u^{-1})\d{u}
    \end{align*}
    with the projection $S(U_2(F)\backslash\GL_2(\AA))\ra\cS$. In other words, using the equivalence of Remark~\ref{remark:one_Y_is_comp_ind_global}, a function $W\in\cInd_U^G(\theta)\cong\one_\Ydown$ is mapped to the automorphic function
    \[
        \sum_{\gamma\in U_2(F)\backslash\GL_2(F)}W(\gamma g)\in S(\GL_2(F)\backslash\GL_2(\AA))\cong\cS.
    \]
\end{construction}

The map $e\co\one_\Ydown\ra\cS$ of Construction~\ref{const:unit_of_cS} will be a unit with respect to the multiplication $m\co\cS\oY\cS\ra\widetilde{\cS}$ of Construction~\ref{const:aut_mult}. To show this, we will need the following proposition about the Petersson pairing.

Recall that the Petersson pairing is the map $\rho\co\cS\otimes_G\cS\ra\CC$ given by
\[
    f(g)\otimes f'(g')\mapsto\int_{\GL_2(\AA)}f(g)f'(g^{-T})\dtimes{g},
\]
where we have implicitly used $\cS\otimes_G\cS$ instead of the more formally correct $I_{RL}(\cS)\otimes_G\cS$. This can also be described by the functional:
\begin{align*}
    \prescript{}{\GL_2(F)\backslash}{S(\GL_2(\AA))}_{/\GL_2(F)} & \ra\CC \\
    f(g) & \mapsto\sum_{\xi\in\GL_2(F)}f(\xi),
\end{align*}
where the notation $\prescript{}{\GL_2(F)\backslash}{S(\GL_2(\AA))}_{/\GL_2(F)}$ denotes the co-invariants of the vector space $S(\GL_2(\AA))$ under the two actions of $\GL_2(F)$.

\begin{proposition} \label{prop:mult_on_cS_is_unital}
    The composition
    \[
        \cS\otimes_G\cS\cong\one_\Ydown\otimes_G(\cS\oY\cS)\xrightarrow{e\otimes\id}\cS\otimes_G(\cS\oY\cS)\xrightarrow{\mu}\CC
    \]
    is the Petersson pairing $\rho$.
\end{proposition}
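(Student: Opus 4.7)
My plan is to reformulate the proposition as the unitality of the multiplication $m$ with respect to the unit $e$, and then verify this unitality by direct computation using the explicit formulas from the paper.

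First, I would observe that by the definition of $m$ in Construction~\ref{const:aut_mult}, the functional $\mu\colon\cS\otimes_G(\cS\oY\cS)\to\CC$ factors as $\id\otimes m\colon\cS\otimes_G(\cS\oY\cS)\to\cS\otimes_G\widetilde{\cS}$ followed by the canonical evaluation pairing $\cS\otimes_G\widetilde{\cS}\to\CC$. Indeed, unfolding $\cS\otimes_G V\cong V_{\GL_2(F)}$, the image of $f\otimes X$ is represented by $\int_{[\GL_2]}f(g)(g\cdot X)\dtimes{g}$, so $\mu(f\otimes X)=\int_{[\GL_2]}f(g)\mu(g\cdot X)\dtimes{g}=\int_{[\GL_2]}f(g)m(X)(g)\dtimes{g}$, which is the natural evaluation. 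Under the tautological inclusion $\cS\subseteq\widetilde{\cS}$ (adjoint to the Petersson pairing by definition), this evaluation pairing restricts to $\rho$ on $\cS\otimes_G\cS$. Therefore, the proposition is equivalent to the statement that $\beta\colon\cS\cong\one_\Ydown\oY\cS\xrightarrow{e\oY\id}\cS\oY\cS\xrightarrow{m}\widetilde{\cS}$ equals the natural inclusion $i\colon\cS\hookrightarrow\widetilde{\cS}$, i.e., to the unitality of $m$ with respect to $e$.

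Second, I would verify $m\circ\beta=i$ by direct computation. Both sides are $G$-equivariant maps $\cS\to\widetilde{\cS}$, so it suffices to compare their values at the identity, namely to show $\mu(\beta(f))=f(1)$ for all $f\in\cS$ (with the appropriate $I_{RL}$ conventions). To sidestep inverting the unit isomorphism, I would work instead on the $\one_\Ydown\otimes_G(\cS\oY\cS)$ side: for every $W\otimes X\in\one_\Ydown\otimes_G(\cS\oY\cS)$ with $X=f_1\otimes\Psi\otimes f_2$, show that $\mu(e(W)\otimes X)$ equals the value at the identity of the image of $W\otimes X$ in $\cS$ under the unit isomorphism. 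Both sides can be made explicit using Equation~\eqref{eq:explicit_unitality} for the unit isomorphism, Construction~\ref{const:unit_of_cS} giving $e(W)(g)=\sum_{\gamma\in U(F)\backslash\GL_2(F)}W(\gamma g)$, and the definition $\mu(\Psi)=\sum_{(\xi,q)\in\M_2(F)\times F^\times}\Psi(\xi,q)$. Their equality should reduce to the standard unfolding of the sum $\sum_{\gamma\in U(F)\backslash \GL_2(F)}$ combined with an application of the Poisson summation formula, in the same spirit as the proof of Proposition~\ref{prop:Y_mult_invar}.

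The main obstacle will be the bookkeeping of the three $G$-actions on ${Y_\AA}$ (left, middle, and right), the $I_{RL}$ conventions for converting between left and right modules, and the Weil representation action appearing in the explicit formula for the unit isomorphism. These interact delicately in the composition, and substantial care will be needed to track how the middle action twists the integrand $(wh^{-T}\cdot\Psi)(g^{-1},-\det g)$ through the sum $\sum_{(\xi,q)}$. Once the bookkeeping is under control, the required cancellations should follow directly from the $\GL_2(F)^3$-invariance of $\mu$ already established in Proposition~\ref{prop:Y_mult_invar} combined with the standard Whittaker-function unfolding of $e(W)$.
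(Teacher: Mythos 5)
Your proposal is correct and takes essentially the same approach as the paper: after an equivalent reformulation as unitality of $m$, you land on exactly the diagram the paper verifies, comparing the clockwise composition (through $e\otimes\id$ and $\mu$) with the counter-clockwise one (through the unitality isomorphism and $\rho$), using Equation~\eqref{eq:explicit_unitality}, Construction~\ref{const:unit_of_cS}, and the $\GL_2(F)^3$-invariance of $\mu$ from Proposition~\ref{prop:Y_mult_invar}. The paper simply verifies the commuting square directly and derives the unitality statement as Corollary~\ref{cor:mult_on_cS_is_unital} afterward, so your opening reduction is a harmless detour rather than a genuine difference.
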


\begin{remark}
    Proposition~\ref{prop:mult_on_cS_is_unital} is independent of \emph{which} of the three copies of $\cS$ we insert $\one_\Ydown$ into. This follows by the $S_3$-invariance shown in Proposition~\ref{prop:Y_mult_invar}.
\end{remark}

Finally, the desired unitality property of $e\co\one_\Ydown\ra\cS$ follows because $\mu$ is given by $m$ together with the natural evaluation pairing:
\begin{corollary} \label{cor:mult_on_cS_is_unital}
    The composition
    \[\xymatrix{
        \cS \ar@{=}[r] & \one_\Ydown\oY\cS \ar[r]^{e\otimes\id} & \cS\oY\cS \ar[r]^-m & \widetilde{\cS}
    }\]
    is the inclusion $\cS\subseteq\widetilde{\cS}$.
\end{corollary}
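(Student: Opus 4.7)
The plan is to reduce the statement to Proposition~\ref{prop:mult_on_cS_is_unital} via the universal property of the contragredient. By definition of $\widetilde{\cS}$, a $G$-equivariant map $\cS\ra\widetilde{\cS}$ is the same datum as a $G$-invariant pairing $\cS\otimes_G\cS\ra\CC$, and the asserted inclusion $\cS\subseteq\widetilde{\cS}$ is precisely the one corresponding to the Petersson pairing $\rho$. So it suffices to show that the pairing $\cS\otimes_G\cS\ra\CC$ induced by the composition $j\co\cS\ra\widetilde{\cS}$ of the corollary agrees with $\rho$.

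Next I would unwind what this induced pairing actually is. For $\phi,\phi'\in\cS$, Construction~\ref{const:aut_mult} gives $m(e\oY\phi)(g)=\mu(g\cdot(e\oY\phi))$, where $g$ acts by the middle action of $\GL_2(\AA)$ on $\cS\oY\cS$. Pairing against $\phi'$ via the canonical $\cS$-$\widetilde{\cS}$ evaluation amounts to integrating over the automorphic quotient:
\[
\langle \phi',j(\phi)\rangle=\int_{\GL_2(F)\backslash\GL_2(\AA)}\phi'(g)\,\mu\bigl(g\cdot(e\oY\phi)\bigr)\dtimes{g}.
\]
By the $\GL_2(F)$-invariance of $\mu$ under the middle action, this integral descends to the relative tensor product over the middle $G$-action, yielding exactly the value of $\mu$ on $\phi'\otimes_G(e\oY\phi)\in\cS\otimes_G(\cS\oY\cS)$.

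Finally, I would identify this expression with the composition appearing in Proposition~\ref{prop:mult_on_cS_is_unital} and conclude that it equals $\rho(\phi,\phi')$. The proposition inserts $e$ on the outside via the isomorphism $\cS\otimes_G\cS\cong\one_\Ydown\otimes_G(\cS\oY\cS)$, while the computation just performed has $e$ sitting inside $\cS\oY\cS$; matching these two placements uses the full $S_3\ltimes\GL_2(F)^3$-invariance of $\mu$ established in Proposition~\ref{prop:Y_mult_invar}, which freely permutes the three tensor factors. Combined with the proposition, this shows that the pairing induced by $j$ is $\rho$, hence $j$ is the Petersson inclusion.

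The main obstacle is essentially bookkeeping rather than mathematics: tracking the left/right module conventions through the functor $I_{RL}$ of Definition~\ref{def:directions_for_G_modules}, and correctly matching up positions of $e$ in the two $S_3$-symmetric formulations. Once the conventions are pinned down, the corollary is an immediate formal consequence of the universal property of $\widetilde{\cS}$ together with Propositions~\ref{prop:Y_mult_invar} and~\ref{prop:mult_on_cS_is_unital}.
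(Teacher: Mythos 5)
Your proposal is correct and follows essentially the same route as the paper. The paper's own derivation of Corollary~\ref{cor:mult_on_cS_is_unital} is compressed into a single sentence — ``the desired unitality property follows because $\mu$ is given by $m$ together with the natural evaluation pairing'' — but when unpacked it is exactly your argument: convert the composition into a pairing $\cS\otimes_G\cS\to\CC$ via the universal property of the contragredient, identify that pairing with the one appearing in Proposition~\ref{prop:mult_on_cS_is_unital}, and then use the $S_3$-invariance of $\mu$ (Proposition~\ref{prop:Y_mult_invar}) to move $\one_\Ydown$ from the outer $\otimes_G$-factor to an inner $\oY$-factor. Indeed the paper explicitly flags this last step in the remark immediately following Proposition~\ref{prop:mult_on_cS_is_unital}, which states that the proposition is independent of which of the three copies of $\cS$ receives $e$, precisely because of $S_3$-invariance. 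You identified all three ingredients (universal property of $\widetilde{\cS}$, factorization $\mu=\mathrm{ev}\circ(\id\otimes m)$, and $S_3$-symmetry) correctly; the only gaps are notational, e.g.\ writing $m(e\oY\phi)$ where $e$ is a map rather than an element, and the untracked $I_{RL}$ twist in the evaluation pairing, both of which you already acknowledge as bookkeeping.
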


\begin{proof}[Proof of Proposition~\ref{prop:mult_on_cS_is_unital}]
    We want to show that the two maps $\cS\otimes_G\cS\ra\CC$ coincide. This is a straightforward verification. We will show that the diagram
    \[\xymatrix{
        \one_\Ydown\otimes_G\left(\prescript{}{\GL_2(F)\backslash}{{Y_\AA}}_{/\GL_2(F)}\right) \ar[r]^-\sim & \one_\Ydown\otimes_G(\cS\oY\cS) \ar[r]^-{e\otimes\id} \ar[d]^\sim & \cS\otimes_G(\cS\oY\cS) \ar[d]^\mu \\
        & \cS\otimes_G\cS \ar[r]^-\rho & \CC
    }\]
    commutes by separately evaluating the clockwise and counter-clockwise compositions.
    
    We begin with the counter-clockwise composition. Using Equation~\eqref{eq:explicit_unitality}, it is given by:
    \begin{equation*}
        f(h)\otimes\Psi(g,y)\mapsto\sum_{\xi\in\GL_2(F)}\int_{\GL_2(\AA)}(wh^{-T}\cdot\Psi)(\xi^{-1},-\det(\xi))\cdot f(h)\dtimes{h}.
    \end{equation*}
    
    We should compare this with the clockwise composition:
    \begin{multline*}
        f(h)\otimes\Psi(g,y)\mapsto \\
        \mapsto\sum_{(\xi,q)\in\M_2(F)\times F^\times}\int_{U_2(F)\backslash\GL_2(\AA)}\int_{U_2(\AA)}(h^{-T}\cdot\Psi)(\xi,q)\cdot f(uh)\cdot\theta(u^{-1})\d{u}\dtimes{h}= \\
        =\sum_{(\xi,q)\in\M_2(F)\times F^\times}\int_{U_2(F)\backslash\GL_2(\AA)}\int_{U_2(\AA)}(wh^{-T}\cdot\Psi)(\xi,-q)\cdot f(uh)\cdot\theta(u^{-1})\d{u}\dtimes{h}= \\
        =\sum_{(\xi,q)\in\M_2(F)\times F^\times}\int_{\GL_2(\AA)}(wh^{-T}\cdot\Psi)(\xi,-q)\cdot f(h)\cdot \int_{\AA/F}e(x(q\det{\xi}-1))\d{x}\dtimes{h}.
    \end{multline*}
    Thus, the two maps coincide.
\end{proof}

\subsection{Associativity of the Multiplication} \label{subsect:aut_Y_assoc}

We are now armed with a unital multiplication map $m\co\cS\oY\cS\ra\widetilde{\cS}$. Our goal in this subsection will be to prove that it is associative in some sense, and to prove that $\cI$ is an algebra.

Everything will immediately follow from the following proposition:
\begin{proposition} \label{prop:one_Y_onto_cI}
    The image of the unit map $e\co\one_\Ydown\ra\cS$ of Construction~\ref{const:unit_of_cS} is $\cI$.
\end{proposition}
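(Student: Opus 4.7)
The plan is to prove both inclusions separately. The inclusion (image of $e$) $\subseteq\cI$ follows from a direct unfolding. Using the identification $\one_\Ydown\cong\cInd_U^G(\theta)$ from Remark~\ref{remark:one_Y_is_comp_ind_global} and pairing $p\circ e(W)$ with an arbitrary $\psi\in C^\infty(\AA^\times/F^\times)$, we obtain
\[
    \int_{\GL_2(F)\backslash\GL_2(\AA)}\sum_{\gamma\in U(F)\backslash\GL_2(F)}W(\gamma g)\,\psi(\det g)\dtimes{g}=\int_{U(F)\backslash\GL_2(\AA)}W(g)\psi(\det g)\dtimes{g},
\]
which, after splitting along the fibration $U(F)\backslash U(\AA)\hookrightarrow U(F)\backslash\GL_2(\AA)\twoheadrightarrow U(\AA)\backslash\GL_2(\AA)$ and applying the $\theta$-equivariance of $W$, factors as
\[
    \biggl(\int_{U(\AA)\backslash\GL_2(\AA)}W(g)\psi(\det g)\dtimes{g}\biggr)\cdot\int_{U(F)\backslash U(\AA)}\theta(u)\d{u}.
\]
The second factor vanishes because $\theta$ is a non-trivial character of the compact group $U(\AA)/U(F)\cong\AA/F$.

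For the reverse inclusion $\cI\subseteq$ (image of $e$), the plan is to dualize. The contragredient of the projection $p\co\cS\to S(\AA^\times/F^\times)$ is the pullback $C^\infty(\AA^\times/F^\times)\hookrightarrow\widetilde{\cS}$ sending $\chi\mapsto\chi\circ\det$, and its image is precisely the annihilator of $\cI=\ker p$ in $\widetilde{\cS}$. The contragredient of $e$ is the Whittaker map $W\co\widetilde{\cS}\to\widetilde{\one_\Ydown}$ of Construction~\ref{const:whitt_map}, by Claim~\ref{claim:unit_contra_to_whittaker}. Hence the desired surjection $e\co\one_\Ydown\twoheadrightarrow\cI$ is equivalent to the identity
\[
    \ker\bigl(W\co\widetilde{\cS}\to\widetilde{\one_\Ydown}\bigr)=\{\chi\circ\det:\chi\in C^\infty(\AA^\times/F^\times)\}.
\]
The inclusion $\supseteq$ is immediate from the same vanishing used in the first paragraph.

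The main obstacle is the inclusion $\subseteq$, which is the classical statement that an automorphic form on $\GL_2$ with vanishing Whittaker function factors through the determinant. Given $\phi\in\widetilde{\cS}$ with $W_\phi=0$, the plan is to Fourier-expand the function $u\mapsto\phi(ug)$ on the compact abelian group $U(\AA)/U(F)\cong\AA/F$, whose characters are indexed by $F$. For each $a\in F^\times$, the change of variables $u\mapsto a^{-1}u$ together with the left $\GL_2(F)$-invariance of $\phi$ applied to $\mathrm{diag}(a,1)$ shows that the $a$-th Fourier coefficient of $u\mapsto\phi(ug)$ is a non-zero scalar multiple of $W_\phi(\mathrm{diag}(a,1)g)$, hence vanishes. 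Thus only the constant Fourier coefficient survives, making $\phi$ left-$U(\AA)$-invariant. Conjugating by the Weyl element $w\in\GL_2(F)$ and using $\GL_2(F)$-invariance again shows that $\phi$ is also left-$U^-(\AA)$-invariant, where $U^-=wUw^{-1}$. Since $U(F_v)$ and $U^-(F_v)$ generate $\SL_2(F_v)$ with bounded Bruhat length at each place, $U(\AA)$ and $U^-(\AA)$ together generate $\SL_2(\AA)$, so $\phi$ is left-$\SL_2(\AA)\cdot\GL_2(F)$-invariant. The determinant identifies $\GL_2(F)\SL_2(\AA)\backslash\GL_2(\AA)$ with $F^\times\backslash\AA^\times$, exhibiting $\phi$ as a function of $\det$ as required.
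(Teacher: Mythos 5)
Your proof is correct and follows essentially the same route as the paper: dualize, reduce to the classical statement that a smooth automorphic function with vanishing Whittaker function factors through the determinant, and prove that statement by Fourier expansion on $U(\AA)/U(F)$ together with $\GL_2(F)$-invariance. The paper is considerably terser: for the inclusion of the image in $\cI$ it simply cites $\Hom(\one_\Ydown, S(\AA^\times/F^\times))=0$ (which is the abstract counterpart of your direct unfolding and vanishing of $\int\theta$), and for the reverse inclusion it merely asserts "from $W_\phi=0$ we see that $\phi$ is left $U(\AA)$-invariant" and that $U(\AA)$ together with $\GL_2(F)$ generate a subgroup containing $\SL_2(\AA)$; your Fourier-coefficient argument via $\mathrm{diag}(a,1)$ and the $U,U^-$ generation argument are exactly the details the paper is suppressing.
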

\begin{remark}
    In a very vague sense, Proposition~\ref{prop:one_Y_onto_cI} can be thought of as a multiplicity one property; because a generic irreducible $V$ has $\dim\Phi^-V=1$, it follows that the space of maps between $\cI$ and $V$ is at most one-dimensional as well. Following the proof of Proposition~\ref{prop:one_Y_onto_cI} below, one can also see that it relies on similar ideas to the usual proof of multiplicity one.
\end{remark}

Before proving Proposition~\ref{prop:one_Y_onto_cI}, let us state some of its corollaries.

\begin{corollary} \label{cor:mult_image_of_cI}
    The image $m(\cI\oY\cI)$ lies inside $\cI\subseteq\cS\subseteq\widetilde{\cS}$.
\end{corollary}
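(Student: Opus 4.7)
The plan is to deduce this immediately from Proposition~\ref{prop:one_Y_onto_cI} (the image of $e\co\one_\Ydown\to\cS$ is exactly $\cI$) combined with the unitality of Corollary~\ref{cor:mult_on_cS_is_unital}. No new computation should be required: this corollary is a formal consequence of the two preceding results together with the fact that $\oY$ preserves colimits.

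First, I would observe that since the bi-functor $\oY$ is colimit-preserving, and hence in particular right-exact in each variable, the surjection $e\co\one_\Ydown\twoheadrightarrow\cI$ from Proposition~\ref{prop:one_Y_onto_cI} induces a surjection
\[
    e\oY e\co \one_\Ydown\oY\one_\Ydown\twoheadrightarrow\cI\oY\cI.
\]
Consequently, the image of the composition $\cI\oY\cI\to\cS\oY\cS\xrightarrow{m}\widetilde{\cS}$ coincides with the image of the composition
\[
    \one_\Ydown\oY\one_\Ydown\xrightarrow{e\oY e}\cS\oY\cS\xrightarrow{m}\widetilde{\cS}.
\]

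Next, I would identify this latter image using unitality. Factoring $e\oY e=(e\oY\id_\cS)\circ(\id_{\one_\Ydown}\oY e)$ and applying Corollary~\ref{cor:mult_on_cS_is_unital} to the last two arrows, the above composition becomes
\[
    \one_\Ydown\oY\one_\Ydown\xrightarrow{\id_{\one_\Ydown}\oY e}\one_\Ydown\oY\cS\cong\cS\hookrightarrow\widetilde{\cS},
\]
where the second arrow is the unit isomorphism. Using naturality of the unit isomorphism, this collapses further to
\[
    \one_\Ydown\oY\one_\Ydown\cong\one_\Ydown\xrightarrow{e}\cS\hookrightarrow\widetilde{\cS}.
\]
By Proposition~\ref{prop:one_Y_onto_cI}, the image of this composition is precisely $\cI$, so $m(\cI\oY\cI)=\cI$, which in particular proves the containment $m(\cI\oY\cI)\subseteq\cI$.

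There is no real obstacle; everything is bookkeeping once one has Proposition~\ref{prop:one_Y_onto_cI}. The only thing to be mildly careful about is that the ``image'' $m(\cI\oY\cI)$ is to be understood as the image of the composition $\cI\oY\cI\to\cS\oY\cS\to\widetilde{\cS}$ (since the map $\cI\oY\cI\to\cS\oY\cS$ induced by the inclusion need not be injective), but this is exactly the situation the surjectivity of $e\oY e$ handles cleanly.
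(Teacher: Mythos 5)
Your proof is correct and follows essentially the same strategy as the paper's: both reduce the claim to the surjectivity of $e\co\one_\Ydown\twoheadrightarrow\cI$ (Proposition~\ref{prop:one_Y_onto_cI}), the unitality of $m$ (Corollary~\ref{cor:mult_on_cS_is_unital}), and the right-exactness of $\oY$. The only cosmetic difference is that the paper precomposes once, through the surjection $\one_\Ydown\oY\cI\twoheadrightarrow\cI\oY\cI$, whereas you precompose twice, through $\one_\Ydown\oY\one_\Ydown\twoheadrightarrow\cI\oY\cI$, requiring one extra naturality step to collapse $\one_\Ydown\oY\one_\Ydown\cong\one_\Ydown$; both yield the (slightly stronger) conclusion $m(\cI\oY\cI)=\cI$.
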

\begin{proof}
    Consider the commutative diagram:
    \[\xymatrix{
        \one_\Ydown\oY\cI \ar[r]^-\sim \ar@{->>}[d] & \cI \ar@{^{(}->}[d] \\
        \cI\oY\cI \ar@{-->}[ur] \ar[r]^-m & \widetilde{\cS}.
    }\]
    The left vertical map is surjective, while the right vertical map is injective. This means that the diagram can be completed as above.
\end{proof}

\begin{corollary} \label{cor:cI_alg_cS_module}
    The multiplication $m\co\cS\oY\cS\ra\widetilde{\cS}$ makes $\cI$ into a unital commutative algebra with respect to $\oY$.
\end{corollary}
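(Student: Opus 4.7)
The plan is to verify the three algebra axioms in turn, taking the unit to be the map $e\co\one_\Ydown\to\cS$ of Construction~\ref{const:unit_of_cS}, which lands in $\cI$ by Proposition~\ref{prop:one_Y_onto_cI}, and taking the multiplication to be the restriction $\cI\oY\cI\to\cI$ of $m$ guaranteed by Corollary~\ref{cor:mult_image_of_cI}.

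Unitality is essentially a re-packaging of Corollary~\ref{cor:mult_on_cS_is_unital}. Restricting the diagram~\eqref{eq:unitality_of_cS} to the sub-bifunctor $\one_\Ydown\oY\cI\subseteq\one_\Ydown\oY\cS$ and combining with Corollary~\ref{cor:mult_image_of_cI} shows that $\one_\Ydown\oY\cI\xrightarrow{e\oY\id}\cI\oY\cI\xrightarrow{m}\cI$ agrees with the unit isomorphism $\one_\Ydown\oY\cI\xrightarrow{\sim}\cI$; this is exactly the unitality axiom. Commutativity is equally formal: by Remark~\ref{remark:Y_is_commutative} the braiding on $\oY$ is induced by the transposition $(1,3)\in S_3$ acting on $Y_\AA$, and by Proposition~\ref{prop:Y_mult_invar} the functional $\mu$ is invariant under the full $S_3$-action, so the map $m$ defined from $\mu$ in Construction~\ref{const:aut_mult} is symmetric.

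Associativity is the only step requiring a real argument, and it is the place I expect the main (albeit mild) obstacle. The idea is to exploit the surjectivity of the unit map from Proposition~\ref{prop:one_Y_onto_cI}. Since $\oY$ preserves colimits and is therefore right-exact in each variable, the triple tensor $e\oY e\oY e\co\one_\Ydown\oY\one_\Ydown\oY\one_\Ydown\twoheadrightarrow\cI\oY\cI\oY\cI$ is surjective (as a composition of three surjections). It therefore suffices to show that the two candidate associativity compositions $m\circ(m\oY\id_\cI)$ and $m\circ(\id_\cI\oY m)$, from $\cI\oY\cI\oY\cI$ to $\cI$, agree after precomposition with this surjection. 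Using unitality already proven, the map $m\circ(e\oY e)\co\one_\Ydown\oY\one_\Ydown\to\cI$ coincides with $e$ under the unit isomorphism $\one_\Ydown\oY\one_\Ydown\cong\one_\Ydown$; iterating, both compositions reduce under the triple unit isomorphism $\one_\Ydown^{\oY 3}\cong\one_\Ydown$ to the single map $e\co\one_\Ydown\to\cI$, and hence are equal.

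In summary, the content of the corollary is almost entirely extracted from the input statements: unitality is Corollary~\ref{cor:mult_on_cS_is_unital} restricted to $\cI$, commutativity is the $S_3$-invariance of $\mu$, and associativity is forced by the surjectivity $\one_\Ydown\twoheadrightarrow\cI$ together with unitality---reflecting the informal principle, cited in Example~\ref{example:irr_reps_are_algs}, that any quotient of the monoidal unit is automatically a commutative unital algebra.
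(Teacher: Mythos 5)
Your proof is correct and takes essentially the same route as the paper: it reduces everything to the surjectivity of $e\co\one_\Ydown\twoheadrightarrow\cI$ established in Proposition~\ref{prop:one_Y_onto_cI}, uses Corollary~\ref{cor:mult_on_cS_is_unital} for unitality, and reads commutativity off the $S_3$-invariance of $\mu$. The only (cosmetic) difference is in the associativity step: you precompose with the surjection $e\oY e\oY e$ at all three coordinates and collapse both sides through the triple unit isomorphism, whereas the paper precomposes with $e$ at a single coordinate and invokes unitality once; both are instances of the same coherence argument, and yours is no less rigorous, merely a touch more verbose.
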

\begin{proof}
    Follows immediately from the surjectivity of $e\co\one_\Ydown\ra\cI$, and the axioms of symmetric monoidal categories. By Corollaries~\ref{cor:mult_on_cS_is_unital} and~\ref{cor:mult_image_of_cI}, it is sufficient to show that the multiplication $m\co\cI\oY\cI\ra\cI$ is associative. Indeed, we have two maps
    \[\xymatrix{
        \cI\oY\cI\oY\cI \ar@<3pt>[rr]^-{m\circ(\id\oY m)} \ar@<-3pt>[rr]_-{m\circ(m\oY\id)} & & \cI,
    }\]
    and we want to show that they are the same. By Proposition~\ref{prop:one_Y_onto_cI}, we can test this by pre-composing with the unit map $e\co\one_\Ydown\ra\cI$ at one of the coordinates. However, the two maps
    \[\xymatrix{
        \one_\Ydown\oY\cI\oY\cI \ar[r] & \cI\oY\cI\oY\cI \ar@<3pt>[rr]^-{m\circ(\id\oY m)} \ar@<-3pt>[rr]_-{m\circ(m\oY\id)} & & \cI
    }\]
    are the same by the unitality result of Corollary~\ref{cor:mult_on_cS_is_unital}.
\end{proof}

\begin{remark} \label{remark:e_determines_mult}
    In fact, the surjectivity of the unit map implies that $m$ is the \emph{unique} multiplicative structure on $\cI$ with respect to this unit.
\end{remark}

\begin{remark}
    Observe that $\cI$ becomes not just an algebra, but it also carries a pairing
    \[
        \left<-,-\right>\co\cI\otimes_G\cI=\Phi^-(\cI\oY\cI)\ra\CC
    \]
    induced from the Petersson pairing $\rho\co\cS\otimes_G\cS\ra\CC$ (here, $\Phi^-\co\Mod(G)\ra\Vect$ indicates the functor of $\theta$-co-invariants; recall that $\Phi^-$ defines a trace on the category $\Mod(G)$, as in Remark~\ref{remark:oY_is_frob}).
    Moreover, the $S_3$-invariance shown in Proposition~\ref{prop:Y_mult_invar} implies that this map satisfies the axiom
    \[
        \left<x,yz\right>=\left<xy,z\right>.
    \]
    Note that $\cI$ is not a Frobenius algebra because it is not self-dual: the map $\cI\ra\widetilde{\cI}$ induced from the pairing is not an isomorphism.
\end{remark}

\begin{corollary}
    The image $m(\cI\oY\cS)$ lies inside $\cS\subseteq\widetilde{\cS}$.
\end{corollary}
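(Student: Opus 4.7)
The plan is to mimic the proof of Corollary~\ref{cor:mult_image_of_cI}, but with the right factor $\cI$ replaced by $\cS$. The two inputs we have available are (a) Proposition~\ref{prop:one_Y_onto_cI}, which says that the unit map $e\co\one_\Ydown\twoheadrightarrow\cI$ is surjective, and (b) the unitality statement of Corollary~\ref{cor:mult_on_cS_is_unital}, which identifies the composition $m\circ(e\oY\id_\cS)\co\one_\Ydown\oY\cS\to\widetilde{\cS}$ with the tautological inclusion $\cS\hookrightarrow\widetilde{\cS}$, factored through the canonical unitor isomorphism $\one_\Ydown\oY\cS\xrightarrow{\sim}\cS$.

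The crucial observation is that $\oY$ preserves colimits in each variable (being defined by a relative tensor product, and explicitly noted to be right-exact), so applying $-\oY\cS$ to the surjection $e\co\one_\Ydown\twoheadrightarrow\cI$ yields a surjection $e\oY\id_\cS\co\one_\Ydown\oY\cS\twoheadrightarrow\cI\oY\cS$. Assembling these pieces into the diagram
\[\xymatrix{
\one_\Ydown\oY\cS \ar@{->>}[d]_{e\oY\id_\cS} \ar[r]^-{\sim} & \cS \ar@{^{(}->}[d] \\
\cI\oY\cS \ar@{-->}[ru] \ar[r]^-{m} & \widetilde{\cS},
}\]
the left vertical arrow is surjective while the right vertical arrow is injective, so there exists a (necessarily unique) diagonal filler, and this filler witnesses that the image of $m\co\cI\oY\cS\to\widetilde{\cS}$ lies inside $\cS$. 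There is no real obstacle here beyond ensuring the right-exactness of $\oY$ in each variable and quoting Proposition~\ref{prop:one_Y_onto_cI}; both are already in hand.
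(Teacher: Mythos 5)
Your proof is correct and is precisely the argument the paper intends: the paper's own proof is just the one-line remark that it is identical to that of Corollary~\ref{cor:mult_image_of_cI}, and you have correctly unwound what "identical" means, invoking Proposition~\ref{prop:one_Y_onto_cI} for the surjectivity of $e\oY\id_\cS$ (via right-exactness of $\oY$) and Corollary~\ref{cor:mult_on_cS_is_unital} for the commutativity of the square.
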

\begin{proof}
    Identical to Corollary~\ref{cor:mult_image_of_cI}.
\end{proof}

\begin{corollary} \label{cor:cS_is_cI_module}
    The multiplication $m\co\cS\oY\cS\ra\widetilde{\cS}$ turns $\cS$ into an $\cI$-module with respect to $\oY$.
\end{corollary}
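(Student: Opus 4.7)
The plan is to mirror the proofs of Corollaries~\ref{cor:mult_image_of_cI} and~\ref{cor:cI_alg_cS_module}, with $\cS$ replacing one of the copies of $\cI$. Recall that an $\cI$-module structure on $\cS$ consists of an action map $a\co \cI\oY\cS\ra\cS$ satisfying the unit and associativity axioms. The preceding corollary already supplies $a$, as the corestriction of $m\co\cS\oY\cS\ra\widetilde{\cS}$ to the subobject $\cI\oY\cS$.

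For unitality, I would invoke Corollary~\ref{cor:mult_on_cS_is_unital} directly: the composite
\[
    \one_\Ydown\oY\cS\xrightarrow{e\oY\id}\cI\oY\cS\xrightarrow{a}\cS\hookrightarrow\widetilde{\cS}
\]
equals the standard unitor of $\oY$ followed by the inclusion $\cS\subseteq\widetilde{\cS}$. Since this inclusion is injective, the map $\one_\Ydown\oY\cS\ra\cS$ obtained by omitting the last arrow is itself the standard unitor, which is precisely the unit axiom for the action $a$.

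For associativity, I would compare the two maps
\[
    a\circ(m\oY\id_\cS),\quad a\circ(\id_\cI\oY a)\co\cI\oY\cI\oY\cS\rightrightarrows\cS.
\]
The surjectivity of $e\co\one_\Ydown\twoheadrightarrow\cI$ from Proposition~\ref{prop:one_Y_onto_cI}, together with the fact that $\oY$ preserves colimits in each variable (hence is right exact), implies that $e\oY\id_\cI\oY\id_\cS\co\one_\Ydown\oY\cI\oY\cS\twoheadrightarrow\cI\oY\cI\oY\cS$ is surjective. It therefore suffices to test equality after pre-composing with this surjection, and after pre-composition both maps collapse, via the unitality just established, to the single action $a\co\cI\oY\cS\ra\cS$, so they agree. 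This is the exact same argument used for the associativity of the product on $\cI$ in Corollary~\ref{cor:cI_alg_cS_module}.

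The only mild obstacle I anticipate is the bookkeeping step of verifying that right exactness of $\oY$ correctly propagates the surjectivity of $e$ through one or two factors; but this was used in identical fashion in Corollaries~\ref{cor:mult_image_of_cI} and~\ref{cor:cI_alg_cS_module}, so no new input is needed.
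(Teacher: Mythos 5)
Your proof is correct and follows essentially the same route as the paper's, which simply says the argument is identical to that of Corollary~\ref{cor:cI_alg_cS_module}: obtain the action map by corestricting $m$ (via the preceding corollary), get unitality from Corollary~\ref{cor:mult_on_cS_is_unital}, and prove the module associativity axiom by pre-composing with the surjection $e\oY\id\oY\id$ (Proposition~\ref{prop:one_Y_onto_cI} plus right-exactness of $\oY$) and reducing both sides to the action $a$ via unitality and monoidal coherence.
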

\begin{proof}
    Identical to Corollary~\ref{cor:cI_alg_cS_module}.
\end{proof}

\begin{remark}
    We have seen that we can think of the multiplication $\cI\oY\cI\ra\cI$ as defining an algebra structure on $\cI$, and of the multiplication $\cI\oY\cS\ra\cS$ as defining an $\cI$-module structure on $\cS$. Both of these multiplications are associative in an appropriate sense. This raises the question of how to think about the associativity of the multiplication map $\cS\oY\cS\ra\widetilde{\cS}$.
    
    We answer this as follows. Note that the map $m\co\cS\oY\cS\ra\widetilde{\cS}$ defines a symmetric triality:
    \[
        \Phi^-(\cS\oY\cS\oY\cS)\ra\CC
    \]
    by the universal property of the contragradient $\widetilde{\cS}$ and Remark~\ref{remark:phi_minus_is_trace}. Our claim is that this triality respects the $\cI$-module structure of $\cS$. This follows immediately by the surjectivity of the map $e\co\one_\Ydown\ra\cI$.
\end{remark}

We dedicate the rest of this subsection to the proof of Proposition~\ref{prop:one_Y_onto_cI}.
\begin{proof}[Proof of Proposition~\ref{prop:one_Y_onto_cI}]
    Since
    \[
        \Hom\left(\one_\Ydown,S(\AA^\times/F^\times)\right)=0,
    \]
    it remains to show that the map $\one_Y\ra\cI$ is surjective. To do so, let us show that the sequence
    \[\xymatrix{
        \one_\Ydown \ar[r] & \cS \ar[r] & S(\AA^\times/F^\times) \ar[r] & 0
    }\]
    is exact.
    
    We will show this by proving that the sequence of contragradients
    \[\xymatrix{
        0 \ar[r] & \widetilde{S(\AA^\times/F^\times)} \ar[r] & \widetilde{\cS} \ar[r] & \widetilde{\one_\Ydown}
    }\]
    is exact. That is, it is sufficient to show that if $\phi\co\GL_2(F)\backslash\GL_2(\AA)\ra\CC$ is a smooth automorphic function whose corresponding Whittaker function
    \[
        W_\phi(g)=\int_{U(\AA)/U(F)}\phi(ug)\theta(u)\d{u}
    \]
    is $0$, then $\phi$ is $\SL_2(\AA)\cdot\GL_2(F)$-invariant.
    
    Indeed, from $W_\phi=0$ we see that $\phi$ is left $U(\AA)$-invariant. Since $\phi$ is also $\GL_2(F)$-invariant, it follows that $\phi$ is left-invariant with respect to the subgroup generated by $U(\AA)$ and $\GL_2(F)$. However, it is easy to check that this subgroup contains $\SL_2(\AA)$. This finishes the proof.
\end{proof}

\subsection{Abstract Automorphicity} \label{subsect:Y_alg_sheaves}

The purpose of this subsection is to introduce the category $\Mod^\aut(\GL_2(\AA))$ of abstractly automorphic representations. This category will be a full subcategory of $\Mod(\GL_2(\AA))$, containing all irreducible automorphic representations, and closed under taking subquotients and contragradients. The author's belief is that this category is a natural place in which to study the theory of automorphic representations.

Let $\Mod(\cI)$ be the category of $\cI$-modules in $\Mod(\GL_2(\AA))$. This makes sense, as $\Mod(\GL_2(\AA))$ is a symmetric monoidal category, and $\cI$ is an algebra with respect to its symmetric monoidal structure $\oY$. We begin by observing that:
\begin{claim} \label{claim:mod_cI_is_fully_faithful}
    The forgetful functor from $\Mod(\cI)$ to $\Mod(\GL_2(\AA))$ is fully faithful.
\end{claim}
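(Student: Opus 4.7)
The plan is to deduce full faithfulness from the single fact, already in hand, that the unit map $e\co\one_\Ydown\to\cI$ of Construction~\ref{const:unit_of_cS} is surjective (Proposition~\ref{prop:one_Y_onto_cI}). The intermediate goal I would reach first is that the multiplication $m\co\cI\oY\cI\to\cI$ is an isomorphism, which upgrades $\cI$ from a mere commutative algebra to an \emph{idempotent} one, at which point the usual abstract nonsense forces the module category to sit fully faithfully inside the ambient category.

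Concretely, first I would observe that unitality (Corollary~\ref{cor:mult_on_cS_is_unital}, transported to $\cI$ via Corollary~\ref{cor:cI_alg_cS_module}) gives a section
\[
\cI\xrightarrow{\sim}\one_\Ydown\oY\cI\xrightarrow{e\oY\cI}\cI\oY\cI\xrightarrow{m}\cI
\]
equal to the identity, so $m$ is a split epimorphism and $e\oY\cI$ is a split monomorphism. On the other hand, since $\oY$ is right exact (it preserves colimits in each variable) and $e$ is surjective, the map $e\oY\cI\co\one_\Ydown\oY\cI\to\cI\oY\cI$ is also an epimorphism, hence an isomorphism. It follows that $m$ is an isomorphism as well.

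With $m$ an isomorphism, full faithfulness of the forgetful functor follows in two steps. For uniqueness of $\cI$-action: given $M\in\Mod(\GL_2(\AA))$ equipped with an $\cI$-module structure $a\co\cI\oY M\to M$, unitality says that the composition $\one_\Ydown\oY M\xrightarrow{e\oY M}\cI\oY M\xrightarrow{a}M$ coincides with the canonical unitor. Since $e\oY M$ is surjective (same right-exactness argument), this condition determines $a$ uniquely; so being an $\cI$-module is a property. For fullness of morphisms: if $(M,a_M)$ and $(N,a_N)$ are $\cI$-modules and $f\co M\to N$ is any $G$-equivariant map, the $\cI$-linearity square
\[
\xymatrix{
\cI\oY M \ar[r]^-{a_M} \ar[d]_{\cI\oY f} & M \ar[d]^{f} \\
\cI\oY N \ar[r]^-{a_N} & N
}
\]
can be tested after precomposition with the epimorphism $e\oY M\co\one_\Ydown\oY M\twoheadrightarrow\cI\oY M$; after this precomposition, the square collapses via unitality to the tautological naturality square of the unitor for $f$, which commutes.

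The only genuinely non-formal ingredient is that $e\oY(-)$ preserves the epi status of $e$, which is just right-exactness of $\oY$ (noted in Subsection~\ref{subsect:def_Y}). Everything else is the general principle that for an algebra $A$ in a symmetric monoidal category with right-exact tensor product, if $\one\twoheadrightarrow A$ is epi (equivalently, if $A\otimes A\xrightarrow{\sim}A$), then $\Mod(A)\to\C$ is fully faithful. I expect no substantial obstacle; the statement is essentially a corollary of Proposition~\ref{prop:one_Y_onto_cI} together with right-exactness of $\oY$.
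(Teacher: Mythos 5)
Your argument is correct and is exactly the expansion of the paper's one-line proof, which just says the claim follows immediately from Proposition~\ref{prop:one_Y_onto_cI} (surjectivity of the unit $e\co\one_\Ydown\to\cI$). You have spelled out the standard idempotent-algebra abstract nonsense that the paper leaves implicit, using the same key input of right-exactness of $\oY$ to propagate the epimorphism.
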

\begin{proof}
    Follows immediately from Proposition~\ref{prop:one_Y_onto_cI}.
\end{proof}
This justifies giving a name to the essential image:
\begin{definition}
    Let $V\in\Mod(\GL_2(\AA))$. We will say that $V$ is \emph{abstractly automorphic} if $V$ belongs to the essential image of $\Mod(\cI)$ in $\Mod(\GL_2(\AA))$.
    
    Denote by $\Mod^\aut(\GL_2(\AA))\subseteq\Mod(\GL_2(\AA))$ the full subcategory of abstractly automorphic representations.
\end{definition}

\begin{remark} \label{remark:abst_aut_if_factors}
    Equivalently, a smooth $\GL_2(\AA)$-module $V$ is abstractly automorphic if and only if the canonical map $\one_\Ydown\oY V\ra V$ factors through the quotient $\cI\oY V$.
\end{remark}

\begin{remark}
    Note that the above also means that the structure of $\cS$ as an $\cI$-module observed in Corollary~\ref{cor:cS_is_cI_module} is unique. That is, since Corollary~\ref{cor:cS_is_cI_module} implies that $\cS\in\Mod^\aut(\GL_2(\AA))$, it follows from Claim~\ref{claim:mod_cI_is_fully_faithful} that $\cS$ carries a unique structure of $\cI$-module.
\end{remark}

\begin{proposition}
    The category $\Mod^\aut(\GL_2(\AA))$ of abstractly automorphic representations is closed under taking subquotients in $\Mod(\GL_2(\AA))$.
    
    Moreover, it is also closed under taking contragradients.
\end{proposition}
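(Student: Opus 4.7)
The plan is to work throughout via the characterization of Remark~\ref{remark:abst_aut_if_factors}: setting $K=\ker(e\co\one_\Ydown\twoheadrightarrow\cI)$, a module $V$ is abstractly automorphic exactly when the natural map
\[
    \kappa_V\co K\oY V\ra V,
\]
formed from $K\hookrightarrow\one_\Ydown$ followed by the unit isomorphism $\one_\Ydown\oY V\cong V$, vanishes. Thus $\Mod^\aut(\GL_2(\AA))$ is cut out as the zero-locus of the natural transformation $\kappa\co K\oY(-)\Rightarrow\mathrm{id}$, and both closure claims should follow by manipulating $\kappa$.

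For subquotients, I will use only the right-exactness of $\oY$ and the naturality of $\kappa$. Given $W\hookrightarrow V$ with $V$ abstractly automorphic, naturality makes $\kappa_W$ followed by $W\hookrightarrow V$ equal to $\kappa_V$ precomposed with the induced map $K\oY W\ra K\oY V$; this composite vanishes, and since $W\hookrightarrow V$ is mono, $\kappa_W=0$. For a quotient $V\twoheadrightarrow V''$, right-exactness of $\oY$ yields a surjection $K\oY V\twoheadrightarrow K\oY V''$; the pullback of $\kappa_{V''}$ along this surjection is the composition $K\oY V\xrightarrow{\kappa_V}V\twoheadrightarrow V''$, which vanishes, so the surjectivity forces $\kappa_{V''}=0$.

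For contragradients I will invoke the Frobenius/self-duality structure of $(\Mod(\GL_2(\AA)),\oY)$ from Remark~\ref{remark:oY_is_frob}. Combining Remark~\ref{remark:phi_minus_is_trace} with the canonical Petersson-style evaluation $I_{RL}(V)\otimes_{\GL_2(\AA)}\tilde{V}\ra\CC$ yields a natural isomorphism
\[
    \Hom_{\GL_2(\AA)}(X,\tilde{V})\;\cong\;\Phi^-(X\oY V)^{*},
\]
under which a morphism $f\co X\ra\tilde{V}$ corresponds to the functional $\Phi^-(X\oY V)\xrightarrow{\Phi^-(f\oY V)}\Phi^-(\tilde{V}\oY V)\xrightarrow{\mathrm{ev}}\CC$. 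Applied to $X=K\oY\tilde{V}$ and $f=\kappa_{\tilde{V}}$, naturality of $\kappa$ together with the symmetry of $\oY$ will identify the corresponding functional with
\[
    \Phi^-(K\oY\tilde{V}\oY V)\;\cong\;\Phi^-\bigl((K\oY V)\oY\tilde{V}\bigr)\xrightarrow{\Phi^-(\kappa_V\oY\tilde{V})}\Phi^-(V\oY\tilde{V})\xrightarrow{\mathrm{ev}}\CC,
\]
which vanishes because $\kappa_V=0$ by hypothesis. Hence $\kappa_{\tilde{V}}=0$, so $\tilde{V}$ is abstractly automorphic.

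The main obstacle is the bookkeeping in the contragradient step: one must trace the self-duality $\Hom(X,\tilde{V})\cong\Phi^-(X\oY V)^{*}$ carefully enough to recognize $\kappa_{\tilde{V}}$ as the transpose of $\kappa_V$ suspended by $\tilde{V}$. Once this identification is in hand, which ultimately relies only on naturality of $\kappa$ together with the symmetric monoidal structure, both closure statements reduce to the single vanishing condition on $\kappa_V$.
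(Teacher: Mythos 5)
Your proof is correct and takes essentially the same route as the paper's: both reduce to Remark~\ref{remark:abst_aut_if_factors}, use right-exactness of $\oY$ for the subquotient case, and appeal to $\Phi^-$ as a trace (Remark~\ref{remark:phi_minus_is_trace}) together with the universal property of the contragradient for duals. Your reformulation via $K=\ker(e)$ and the natural transformation $\kappa$ is a notational repackaging of the same factoring criterion --- since $\cI\oY V=\mathrm{coker}(K\oY V\ra\one_\Ydown\oY V)$ by right-exactness, the map $\one_\Ydown\oY V\ra V$ factors through $\cI\oY V$ iff $\kappa_V=0$ --- which makes the subobject and quotient cases pleasingly symmetric but does not change the substance of the argument.
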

\begin{proof}
    The proof is a straightforward application of Remark~\ref{remark:abst_aut_if_factors}.
    
    Let us show closure under taking contragradients. Let $V\in\Mod^\aut(\GL_2(\AA))$. By Remark~\ref{remark:abst_aut_if_factors}, we must show that the identity map $\widetilde{V}\ra\widetilde{V}$ factors through $\cI\oY\widetilde{V}$. By the universal property of $\widetilde{V}$, this is equivalent to showing that the pairing
    \[
        \Phi^-(V\oY\widetilde{V})\ra \CC
    \]
    factors through
    \[
        \Phi^-(V\oY(\cI\oY\widetilde{V})).
    \]
    However, this holds by associativity of $\oY$ and the abstract automorphicity of $V$.
    
    Let us also show closure under taking sub-objects. Closure under quotients is similar. Let $V$ be abstractly automorphic, and let $V'\subseteq V$. Then we have a map
    \[
        \cI\oY V\ra V.
    \]
    We claim that the image of $\cI\oY V'\ra V$ lies inside $V'$. Indeed, we can check the image by pre-composing with $\one_\Ydown\oY V'\ra\cI\oY V'$, but the map $\one_\Ydown\oY V'\ra V$ factors through the canonical map $\one_\Ydown\oY V'\ra V'$.
\end{proof}

\begin{remark}
    We have already seen in Corollary~\ref{cor:cS_is_cI_module} that $\cS$ is abstractly automorphic. Therefore, so is $\widetilde{\cS}$, and in particular, all irreducible automorphic representations are also abstractly automorphic, being subquotients of it.
\end{remark}

\begin{remark} \label{remark:abst_aut_is_subspace}
    Very informally, if we imagine objects of $\Mod(\GL_2(\AA))$ as sheaves on some algebraic space, then Proposition~\ref{prop:one_Y_onto_cI} says that $\cI$ defines a closed subspace of that algebraic space.
    
    Let us elaborate on this analogy. Instead of the symmetric monoidal category $\Mod(\GL_2(\AA))$ equipped with $\oY$, consider the category $\Mod(A)$ of $A$-modules for some commutative algebra $A$, equipped with the relative tensor product over $A$. Then the unit of $\Mod(A)$ is the object $A$, and its quotient objects (as an $A$-module) are precisely the closed sub-schemes of $\Spec{A}$. In this manner, the quotient $\cI$ of the unit $\one_\Ydown$ can be thought of as defining a closed subspace.
\end{remark}

\begin{example}
    Consider the space $S(\M_2(\AA))$ of smooth and compactly supported functions on $\M_2(\AA)$, which appears as the space of test functions in Godement--Jacquet zeta integrals. This space has actions of $\GL_2(F)$ from the left and right. Consider the vector space $\cL=S(\M_2(\AA))_{/\GL_2(F)}$ of co-invariants under one of those actions. We observe that it is abstractly automorphic.
    
    Indeed, it can be written as:
    \[
        \cL=S(\M_2(\AA))_{/\GL_2(F)}=\cS\oY S(A\backslash G)
    \]
    where $A=\left\{\begin{pmatrix}\AA^\times & \\ & 1\end{pmatrix}\right\}\subseteq\GL_2(\AA)$ is a copy of $\GG_m$ inside $\GL_2$. Since $\cS$ is abstractly automorphic, then so is $\cL$.
\end{example}

\begin{appendices}
\section{Spin Structures} \label{app:spin_struct}

We have stated in the introduction to this paper (Section~\ref{sect:intro_to_triality}) that a piece of global data called an \emph{unramified spin structure} turns an appropriate space $\cI_{/\SL_2(\O_\AA)}$ of spherical automorphic functions into a commutative algebra with respect to the usual tensor product $\otimes$. In this appendix, we will formalize this statement. We will define what an unramified spin structure is, and then show how it yields a canonical multiplicative structure on $\cI_{/\SL_2(\O_\AA)}$. This will allow us to think of functions in $\cI_{/\SL_2(\O_\AA)}$ as functions on the spectrum of $\cI_{/\SL_2(\O_\AA)}$, which is the promised Fourier transformation.

As explained in Section~\ref{sect:intro_to_triality}, one may think of this result as a kind of spectral Fourier transform on the space of spherical automorphic forms
\[
    \cI_{/\SL_2(\O_\AA)}\subseteq S(\GL_2(F)\backslash\GL_2(\AA)/\SL_2(\O_\AA))
\]
which are orthogonal to one-dimensional characters. Here, $\O_\AA\subseteq\AA$ is the subring of all adeles which are integral at all places.

Let us begin with describing what an unramified spin structure is. Let $C$ be the unique smooth and proper curve with function field $F$, and let $\Omega=\Omega_C$ be its sheaf of differentials. Then $\Omega$ is a line bundle on $C$.
\begin{definition}
    An \emph{unramified spin structure} on $C$ is a line bundle $L$ on $C$, equipped with an isomorphism:
    \[
        \eta\co L\otimes L\xrightarrow{\sim}\Omega.
    \]
\end{definition}

\begin{remark}
    An unramified spin structure on $C$ always exists (see \cite{diff_has_root}).
\end{remark}

For the rest of this appendix, fix an unramified spin structure $(L,\eta)$ on $C$. Our goal is to give a construction for a commutative algebra structure on $\cI_{/\SL_2(\O_\AA)}$.

We now turn to defining the convolution product on $\cI_{/\SL_2(\O_\AA)}$. We will do this by globalizing the local construction of the co-algebra $E$ from Subsection~\ref{subsect:Y_for_sph_reps}, and taking an appropriate space of maps between $E$ and $\cI$. In fact, this will let us turn the $\SL_2(\O_\AA)$-co-invariants of an arbitrary algebra with respect to $\oY$ into an algebra with respect to $\otimes$, as in Remarks~\ref{remark:E_is_assoc} and~\ref{remark:E_comult_to_conv_prod}.

The key point is that we have made several choices in our constructions so far, and we will show how they are all fixed by the unramified spin structure $(L,\eta)$. The first piece of data is the additive character $e\co\AA/F\ra\CC^\times$ that we have chosen in Subsection~\ref{subsect:glob_oY}. Moreover, in order for the construction of Subsection~\ref{subsect:Y_for_sph_reps} to work, we needed to fix some additional local data at every place, as mentioned in Remark~\ref{remark:E_co_mult_is_spin}.

Let us derive this data from the spin structure. Choose any rational section $s\in F\otimes L$ of $L$, and suppose that the additive character $e\co\AA/F\ra\CC^\times$ we have chosen in Subsection~\ref{subsect:glob_oY} corresponds to the section $\eta(s\otimes s)\in F\otimes\Omega$ in the standard way, i.e. by identifying $F\otimes\Omega$ with the dual of $\AA/F$ by taking the sum of all residues.
\begin{remark}
    It will turn out that the multiplication on $\cI_{/\SL_2(\O_\AA)}$ is independent of the choice of rational section $s$.
\end{remark}

Observe that the valuation of $e$ at any place $v$ is necessarily even. In particular, there is a unique $\O_\AA$-sub-module $\Lambda\subseteq \AA$ which is self-dual with respect to $e$.
\begin{definition}
    Let
    \[
        {E_\AA}=\cInd_{\SL_2(\O_\AA)}^G(\one)
    \]
    be the $G$-module given by induction with compact support from the trivial representation of $\SL_2(\O_\AA)$.
\end{definition}

The self-dual lattice $\Lambda$ allows us to fix the co-multiplication on ${E_\AA}$ as follows.
\begin{construction} \label{const:E_co_mult_global}
    We define the map
    \[
        \mu\co {E_\AA}\ra {E_\AA}\oY {E_\AA}
    \]
    to be induced by the $S_3\ltimes \SL_2(\O_\AA)^3$-invariant distribution
    \[
        \one_{\M_2(\Lambda)}(g)\delta_{1}(y)
    \]
    on $\M_2(\AA)\times \AA^\times$, in a similar manner to Construction~\ref{const:E_co_mult}.
\end{construction}

\begin{claim} \label{claim:spherical_is_idemp_global}
    The map $\mu\co {E_\AA}\ra {E_\AA}\oY {E_\AA}$ of Construction~\ref{const:E_co_mult_global} turns ${E_\AA}$ into a co-unital, co-commutative and co-associative co-algebra.
\end{claim}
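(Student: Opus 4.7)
The plan is to reduce the global statement to its local counterpart (Claim~\ref{claim:spherical_is_idemp}, together with its co-associativity upgrade Remark~\ref{remark:E_is_assoc}) via a factorization of ${E_\AA}$ and its co-multiplication as restricted tensor products over the places of $F$. Concretely, I would first observe that ${E_\AA}=\cInd_{\SL_2(\O_\AA)}^{\GL_2(\AA)}(\one)$ decomposes as a restricted tensor product $\bigotimes'_v E_v$ with distinguished vectors $\one_{\GL_2(\O_v)}\in E_v$ at almost all places, and similarly ${E_\AA}\oY{E_\AA}\cong\bigotimes'_v (E_v\oY E_v)$ using the factorization of ${Y_\AA}$.

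The key step is to identify the global co-multiplication of Construction~\ref{const:E_co_mult_global} as the restricted tensor product of local co-multiplications of the form given in Construction~\ref{const:E_co_mult}. Because the additive character $e=\eta(s\otimes s)$ arises from the spin structure, the local valuations $\nu(e_v)$ are all even; consequently the self-dual lattice factors as $\Lambda=\prod_v\Lambda_v$ with $\Lambda_v=\pi_v^{-\nu(e_v)/2}\O_v$. At places where $\nu(e_v)=0$ one has $\Lambda_v=\O_v$ and the local distribution $\one_{\M_2(\O_v)}(g)\delta_1(y)$ is literally the local co-multiplication of Construction~\ref{const:E_co_mult}. At the finitely many ramified places, I would show that conjugation by the diagonal element $\diag(\pi_v^{-\nu(e_v)/2},\pi_v^{-\nu(e_v)/2})$ in the middle copy of $\GL_2(F_v)$ identifies the local distribution $q_v^{-\nu(e_v)}\one_{\M_2(\O_v)}(g)\delta_{\pi_v^{-\nu(e_v)}}(y)$ of Construction~\ref{const:E_co_mult} with $\one_{\M_2(\Lambda_v)}(g)\delta_1(y)$, which is simply a re-choice of the auxiliary matrix $h$ in Remark~\ref{remark:E_co_mult_is_spin} and therefore preserves the local co-algebra structure.

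Once this factorization is established, each of the three conditions is inherited from its local analog. Co-unitality follows place-by-place from Claim~\ref{claim:spherical_is_idemp}, by identifying the global co-unit $\varepsilon_\AA\co{E_\AA}\to\one_\Ydown$ (defined as in Construction~\ref{const:E_co_unit} using the matrix $\diag(1,-1)$ acting through the rescaled lattice) with the restricted tensor product of local co-units. Co-commutativity is immediate from the visible invariance of $\one_{\M_2(\Lambda)}(g)\delta_1(y)$ under the $S_3$-action of Remark~\ref{remark:Y_is_comm_frob}. Co-associativity is the deepest of the three, and follows globally from Remark~\ref{remark:E_is_assoc} applied at every place: the two induced maps ${E_\AA}\to{E_\AA}\oY{E_\AA}\oY{E_\AA}$ factor as restricted tensor products of the corresponding local maps, which are equal by co-associativity of each $E_v$.

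The main obstacle I anticipate is bookkeeping at the ramified places: verifying that the rescaling by $\pi_v^{-\nu(e_v)/2}$ (implicit in the passage from $\O_\AA$ to the self-dual lattice $\Lambda$) intertwines the local co-multiplication of Construction~\ref{const:E_co_mult} with the ``global-looking'' distribution $\one_{\M_2(\Lambda_v)}(g)\delta_1(y)$, and that this rescaling is indeed of the form permitted by Remark~\ref{remark:E_co_mult_is_spin}. Once this compatibility is in hand, everything else is formal: the global co-algebra axioms become place-by-place assertions that are already known locally.
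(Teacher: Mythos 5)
Your approach is correct and closely parallels the paper's proof, with one point of divergence worth noting.

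Your treatment of co-unitality is exactly the paper's: reduce to the local Claim~\ref{claim:spherical_is_idemp}, adjusting the choice of the auxiliary matrix $h$ in Remark~\ref{remark:E_co_mult_is_spin} at the finitely many places where $\nu(e_v)\neq 0$. Your observation that $\nu(e_v)$ is even (since $e$ is built from $\eta(s\otimes s)$) and that the correct local $h$ is the one that exchanges $\O_v$ for the self-dual lattice $\Lambda_v$ is the right way to unwind what the paper calls ``the adjustments described in Remark~\ref{remark:E_co_mult_is_spin}.'' One small slip: you write ``conjugation by'' the scalar matrix $\diag(\pi_v^{-\nu(e_v)/2},\pi_v^{-\nu(e_v)/2})$, but this element is central, so conjugation by it is trivial; what you mean is \emph{acting by} this central element (equivalently via any of the three actions, which agree on the center by Lemma~\ref{lemma:center_is_compatible}), which is what effects the replacement $\one_{\M_2(\O_v)}(g)\delta_{\pi^{-\nu(e_v)}}(y)\mapsto\one_{\M_2(\Lambda_v)}(g)\delta_1(y)$ up to the stated constant. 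Your treatment of co-commutativity via the visible $S_3$-symmetry of $\one_{\M_2(\Lambda)}(g)\delta_1(y)$ is also what the paper implicitly uses.

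Where you diverge is co-associativity. You establish the restricted-tensor-product factorization of both ${E_\AA}$ and its co-multiplication, and then inherit co-associativity place-by-place from Remark~\ref{remark:E_is_assoc}. The paper instead invokes the Idempotent Theorem of \cite{cat_weil} \emph{once, globally}: having verified co-unitality and that $\mu$ is an isomorphism, co-associativity is automatic in any honest symmetric monoidal category. Both arguments work (the global $\oY$ is already known to be a symmetric monoidal structure by the end of Subsection~\ref{subsect:glob_oY}), but the paper's route is cheaper because it only needs the local-to-global bookkeeping for co-unitality; once that is in hand, the abstract Idempotent Theorem absorbs the rest, whereas your route has to carry the factorization through a second time for the co-associativity diagram.
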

\begin{proof}
    Co-unitality follows immediately from the local Claim~\ref{claim:spherical_is_idemp}, using the adjustments described in Remark~\ref{remark:E_co_mult_is_spin}.
    
    Co-associativity follows from the Idempotent Theorem of \cite{cat_weil}, as in Remark~\ref{remark:E_is_assoc}.
\end{proof}

We can now state our final result for this appendix:
\begin{construction}
    The space of spherical automorphic functions:
    \[
        \cI_{/\SL_2(\O_\AA)}=\sHom({E_\AA},\cI)
    \]
    acquires a canonical quasi-unital commutative algebra structure over the center of $\Mod(G)$, with respect to the usual tensor product $\otimes$. This follows by the co-algebra structure on ${E_\AA}$ given in Claim~\ref{claim:spherical_is_idemp_global} and the algebra structure on $\cI$ given in Corollary~\ref{cor:cI_alg_cS_module}.
    
    It is possible to see (by a direct computation) that the resulting algebra structure on $\cI_{/\SL_2(\O_\AA)}$ is independent of the choice of section $s$.  
\end{construction}

\begin{remark}
    It is also possible to show that the multiplication on $\cI_{/\SL_2(\O_\AA)}$ is invariant with respect to the involution
    \begin{align*}
        \iota\co \cI & \xrightarrow{\sim} \cI \\
        \iota(f)(g) & = \abs{\det(g)}^{-2}f(g^{-T}).
    \end{align*}
    That is, if the product of $f,f'\in\cI_{/\SL_2(\O_\AA)}$ is given by $f''\in\cI_{/\SL_2(\O_\AA)}$, then the product of $\iota(f)$ and $\iota(f')$ is given by $\iota(f'')$.
\end{remark}

\end{appendices}

\Urlmuskip=0mu plus 1mu\relax
\bibliographystyle{alphaurl}
\bibliography{main}

\end{document}